\newenvironment{customthm}[1]
  {\innercustomthm}
  {\endinnercustomthm}
\DeclareMathAlphabet{\mathpzc}{OT1}{pzc}{m}{it}
\title{{\larger S}keletons of stable maps I: Rational curves in toric varieties}
\author{\vspace{-0.0in}{{\larger D}{\smaller hruv}\ \ {\larger R}{\smaller anganathan}}}
\date{\today}
\address{Department of Mathematics, Massachusetts Institute of Technology, Cambridge, MA 02138}
\email{dhruvr@mit.edu}
\newtheorem{theorem}{Theorem}[subsection]
\newtheorem{lemma}[theorem]{Lemma}
\newtheorem{proposition}[theorem]{Proposition}
\newtheorem{definition}[theorem]{Definition}
\newtheorem{quasi-theorem}[theorem]{Quasi-Theorem}
\newtheorem{blank remark}[theorem]{}
\newtheorem{rem1}[theorem]{Remark}
\newenvironment{remark}{\begin{rem1}\em}{\end{rem1}}
\newtheorem{not1}[theorem]{Notation}
\newcommand{\CC} {{\mathbb C}}          
\newcommand{\NN} {{\mathbb N}}		
\newcommand{\PP}{\mathbb{P}}         
\newcommand{\QQ} {{\mathbb Q}}		
\newcommand{\RR} {{\mathbb R}}		
\newcommand{\ZZ} {{\mathbb Z}}
\newcommand{\Hom}{\operatorname{Hom}}
\DeclareMathOperator{\val}{val}
\DeclareMathOperator{\spec}{Spec}
\newcommand{\cal}{\mathcal}
\def\cE{{\cal E}}
\def\cF{{\cal F}}
\def\cM{{\cal M}}
\def\cT{{\cal T}}
\def\cX{{\cal X}}
\def\fM{\mathfrak{M}}
\def\fS{\mathfrak{S}}
\newcommand{\Mbar}{\overline{\cM}}
\newcommand{\Lsm}{\mathcal{LSM}}
\newcommand{\Tsm}{{TSM}}
\def\trop{\mathrm{trop}}
\def\an{\mathrm{an}}
\def\blfootnote{\xdef\@thefnmark{}\@footnotetext}
\begin{document}

\pagestyle{plain}
\vspace{-1in}
\maketitle

\begin{abstract}
\vspace{-0.3in}
We study the Berkovich analytification of the space of genus $0$ logarithmic stable maps to a toric variety $X$ and present applications to both algebraic and tropical geometry. On algebraic side, insights from tropical geometry give two new geometric descriptions of this space of maps -- (1) as an explicit toroidal modification of $\overline M_{0,n}\times X$ and (2) as a tropical compactification in a toric variety. On the combinatorial side, we prove that the tropicalization of the space of genus $0$ logarithmic stable maps coincides with the space of tropical stable maps, giving a large new collection of examples of faithful tropicalizations for moduli. Moreover, we identify the optimal settings in which the tropicalization of the moduli space of maps is faithful. The Nishinou--Siebert correspondence theorem is shown to be a consequence of this geometric connection between the algebraic and tropical moduli.
\end{abstract}

\blfootnote{This research was partially supported by funds from NSF grant CAREER DMS-1149054 (PI: Sam Payne).}

\section{Introduction}

The primary objective of this paper is to explore the interplay between the algebraic, tropical, and non-archimedean geometry of the space of logarithmic stable maps to a toric variety in genus $0$. There are three outcomes of this study, which we catalogue before stating the results formally. The first of these is algebro-geometric in nature, the second is a new result in the realm of faithful tropicalization, and the third is a new perspective on a fundamental correspondence theorem in tropical enumerative geometry.\vspace{-3.5mm}

\subsection*{I. The structure of the space of stable maps} The space $\Lsm(X)$ of logarithmic stable maps to a projective toric variety $X$ compactifies the space of geometric genus $0$ curves in $X$ with prescribed contact orders with toric boundary divisors. This space is given a new and concrete description as a toroidal modification of $\overline M_{0,n}\times X$. Moreover, its logarithmic structure is shown to have a particularly simple form: it arises as the pullback of the logarithmic structure on a toric variety via a natural embedding. See Theorem~\ref{thm: trop-comp}.\vspace{-3.5mm}

\subsection*{II. Faithful tropicalization for moduli spaces} The space $TSM(\Delta)$ of tropical stable maps to a the fan $\Delta$ of $X$ is canonically identified with the skeleton of the Berkovich analytification of $\Lsm(X)$. This provides a new infinite family of examples of this phenomenon, building upon~\cite{ACP} and directly generalizing~\cite{CMR14b, Tev07} which prove this in the special cases when $X$ is a point or $\PP^1$. Moreover, our result is essentially the sharpest possible one of this form -- away from trivial cases, tropicalizations of moduli spaces of higher genus stable maps are never faithful. Indeed, this is already the case for genus $1$ maps to $\PP^1$, see~\cite{CMR14a}. This is an important technical ingredient in the above result, see Theorem~\ref{thm: skeleton}.\vspace{-3.5mm}

\subsection*{III. Correspondence theorems for moduli spaces} In~\cite{CMR14a,CMR14b}, Cavalieri, Markwig, and the author initiated a program to understand algebraic/tropical correspondence theorems for enumerative invariants by proving ``geometrized'' correspondence theorems at the level of algebraic/tropical moduli spaces. The present paper achieves this for genus $0$ enumerative invariants of toric varieties, recovering, in a simple fashion, the celebrated correspondence theorem of Nishinou and Siebert~\cite{NS06}. See Theorem~\ref{thm: enumerative}.

\subsection{Formal statement of results}

Let $X$ be a projective toric variety with fan $\Delta$ and $\Lsm_\Gamma(X)$ denote the moduli space of genus $0$ logarithmic stable maps $[f:C\to X]$ having curve class $\beta$, and contact order $c$ to the toric boundary divisor along $n$ marked points. This space was constructed in the papers~\cite{AC11,Che10,GS13}. We package the discrete data $(\beta,n,c)$ in the symbol $\Gamma$. In Section~\ref{lsm-skeleton} we describe an extended cone complex $\Tsm_\Gamma(\Delta)$ parametrizing tropical maps to the compactified fan $\overline \Delta$ with discrete data $\Gamma$. We then construct a set-theoretic tropicalization map
\[
\trop:\Lsm^{\an}_\Gamma(X)\to \Tsm_\Gamma(\Delta).
\] 

\begin{customthm}{A}\label{thm: skeleton}
There is a continuous deformation retraction $\bm p: \Lsm_\Gamma^{\an}(X)\to \overline \fS$ projecting $\Lsm_\Gamma^{\an}(X)$ onto a skeleton, and an isomorphism $\trop_\fS: \overline \fS \to \Tsm_\Gamma(\Delta)$ of extended cone complexes with integral structure making the following diagram commute
\[
\begin{tikzcd}
\Lsm^{\an}_\Gamma(X) \arrow[swap]{dr}{\bm p} \arrow{rr}{\trop} & & \Tsm_\Gamma(\Delta) \\
& \overline \fS \arrow[swap]{ur}{\trop_{\fS}}. & \\
\end{tikzcd}
\]
\end{customthm}

The result above also forms the main technical ingredient in the following. Consider the moduli space of geometric genus $0$ curves $C$ in $X$ meeting the dense torus, with a fixed contact order with each toric boundary divisor. Assume that there is at least $1$ marked point of $C$ that maps to the dense torus of $X$. Denote this space by $\Lsm^\circ_\Gamma(X)$. Let $TSM_\Gamma^\circ(\Delta)$ denote interior of the extended cone complex $TSM_\Gamma(\Delta)$.

\begin{customthm}{B}\label{thm: trop-comp}
Let $\Delta_{0,n}$ denote the fan of the toroidal embedding $\overline M_{0,n}\times X$. Then,
\begin{enumerate}
\item There is a natural refinement of cone complexes
\[
TSM^\circ_\Gamma(\Delta)\to \Delta_{0,n},
\]
such that the associated toroidal modification of $\overline M_{0,n}\times X$ is isomorphic to $\Lsm_\Gamma(X)$. 
\item There exists torus $T$ an embedding $\Lsm_\Gamma^\circ(X)\hookrightarrow T$, such that the closure of $\Lsm_\Gamma^\circ(X)$ in the toric variety defined by $TSM_\Gamma(X)$ coincides with the coarse space of $\Lsm_\Gamma(X)$. 
\end{enumerate}
\end{customthm}

In colloquial terms, the second part of the result above states that $\Lsm_\Gamma^\circ(X)$ and the tropical moduli space $TSM_\Gamma(X)$ together determine the compactification $\Lsm_\Gamma(X)$. An important ingredient in the proof of this result is the irreducibility of the moduli space of genus $0$ logarithmic stable maps to a toric variety, which we prove in Proposition~\ref{prop: irred}, and may be of independent interest. 

%

The simplest case where the features of the results are visible is the moduli space of lines in $\PP^2$ with contact order $1$ along marked points (the ``logarithmic'' dual $\PP^2$). In Section~\ref{sec: extended-example} we work out this case by hand, proving a toy version of the theorem above in Theorem~\ref{toytheorem}.

Finally, we explain how to re-derive the correspondence theorem of Nishinou and Siebert~\cite[Theorem 8.3]{NS06} in this setting. Let $N$ be the cocharacter lattice of the dense torus of $X$. Let $\mathscr L = (L_1,\ldots, L_m)$ be an $m$-tuple of linear subspaces of $N_\QQ$. Let $Z_i$ be the closure of the associated subtorus $\mathbb G(L_i)$. This data defines a logarithmic Gromov--Witten invariant
\[
\langle Z_1,\ldots, Z_m\rangle^{X}_{\Gamma} := \int_{[\Lsm_\Gamma(X)]^{\mathrm{vir}}} ev_1^\star [Z_1] \wedge \cdots \wedge ev_m^\star [Z_m].
\]
In this case, the virtual fundamental class agrees with the usual fundamental class, and the invariants are genuine counts of rational curves in $X$ with incidence to subtorus closures, see Proposition~\ref{prop: logsmooth}. Let $\overline \Delta_{L_i}$ denote the compactified fan of the Chow quotient $X\!\sslash\!\mathbb G(L_i)$ in the sense of~\cite[Section 1]{KSZ91}.

\begin{customthm}{C}\label{thm: enumerative}
There is a natural surjective map of cone complexes with integral structure 
\[
Ev_{\mathscr L}^\trop: \Tsm_\Gamma(\Delta)\to \prod\overline \Delta_{L_i}.
\] 
whose degree on each maximal cell of $\prod \overline \Delta_{L_i}$ is constant, and
\[
\deg(Ev^\trop_{\mathscr L}) = \langle Z_1,\ldots, Z_m\rangle^{X}_{\Gamma}.
\]
\end{customthm}

\noindent
Here, by \textit{degree}, we mean the sum of dilation factors of $Ev^{\trop}_{\mathscr L}$, upon restriction to top dimensional cells of $\Tsm_\Gamma(\Delta)$ that map to a fixed top dimensional cell of $\prod \overline \Delta_{L_i}$. It is computed as a sum of lattice indices over combinatorial types, as explained in~\cite[Theorem 5.1]{GKM07}.

Our proof of Theorem~\ref{thm: enumerative} is based on the Sturmfels--Tevelev multiplicity formula~\cite[Theorem 1.1]{ST}, and the identification of $\Tsm_\Gamma(\Delta)$ as a skeleton of $\Lsm_\Gamma(X)$. This proof differs substantially from the original~\cite[Section 8]{NS06} as it does not involve a choice of a toric degeneration of $X$. In loc. cit., a tropical stable map in $\Delta$ corresponds to a degeneration of $X$ to a broken toric variety $X_0$, together with a nodal curve in $X_0$ that is transverse to the toric strata. In this paper, given a point $p\in \Tsm_\Gamma(\Delta)$, the set $\trop^{-1}(p)$ is an analytic affinoid domain in $\Lsm^{\an}_\Gamma(X)$ that parametrizes stable maps to $X$ with prescribed tropicalization, rather than maps to a degeneration of $X$. 

\subsection{Related results and future directions} The result of Theorem~\ref{thm: skeleton} builds on earlier work of Abramovich, Caporaso, and Payne~\cite[Theorem 1.2.1]{ACP}, identifying the skeleton of the moduli stack of stable pointed curves with a complex of abstract tropical curves. It ultimately relies on techniques developed by Thuillier~\cite{Thu07}. There are variations on this result for toroidal compactifications of spaces of smooth weighted pointed curves~\cite{CHMR,U14a}. For spaces of (ramified) maps, one still expects a continuous map from the analytified moduli space of maps to a tropical moduli space~\cite{Yu14b}. However this no longer identifies the tropical moduli space with a skeleton. In fact, this identification fails even for low degree maps from genus $1$ curves to $\PP^1$, even though there is a toroidal compactification by the space of admissible covers~\cite{CMR14a}. In this sense, the genus $0$ requirement in Theorem~\ref{thm: skeleton} is sharp. The tropicalization and analytification of higher genus logarithmic maps is explored in the sequel to this article~\cite{R16} using Artin fan techniques. 

The result of Theorem~\ref{thm: trop-comp} gives concrete handle on the geometry of the moduli space of logarithmic stable maps in genus $0$. A very closely related result appears in Chen and Satriano's description of this space in the special case where the general curve is the closure of a one-parameter subgroup in $X$. See~\cite[Theorem 1.1]{CS12}. Ascher and Molcho have recently generalized these results to higher rank subtori~\cite[Theorem 1.3]{AM14}.

Together with the results of~\cite{CMR14a,CMR14b}, Theorem~\ref{thm: enumerative} contributes to the understanding of the role of tropical computations in degeneration formulas for relative Gromov--Witten theories. The cases considered thus far are enumerative problems associated to proper moduli spaces that are toroidal. Beyond these cases, it appears that tropical curve counts will be used in unison with virtual techniques, as in the forthcoming article of Abramovich, Chen, Gross, and Siebert~\cite{ACGS15}. A key role in loc.\! cit. is 
played by the stack of pre-stable maps to the Artin fan of a logarithmic variety~\cite[Proposition 1.5.1]{AW}. The connection between such maps and tropical and non-archimedean geometry is initiated studied in~\cite{R15a} and will be further studied in~\cite{R16}. 

Since this article first appeared on the ar$\chi$iv, A. Gross has used the Theorem~\ref{thm: trop-comp} to prove algebraic/tropical descendant correspondence theorems for toric varieties~\cite{Gro15}. We expect additional applications. In future work, we intend to use this description of the space of maps to compute the cohomology of the space of maps and to study genus $0$ characteristic numbers for toric varieties. In~\cite{Pan99}, Pandharipande uses similar computations to derive recursions for characteristic numbers of projective space.

\subsection*{Acknowledgements} This paper represents Chapter IV of my dissertation at Yale University. It is a pleasure to acknowledge the ideal working conditions at Brown University in Spring 2015. I extend my gratitude to Dan Abramovich and Sam Payne for encouragement and many insightful discussions. Thanks are due to my collaborators Renzo Cavalieri and Hannah Markwig -- this project grew out of an effort to generalize the results of~\cite{CMR14a,CMR14b} to higher dimensions. I have benefited from conversations with Dori Bejleri, Qile Chen, Tyler Foster, Mark Gross, Bernd Siebert, Martin Ulirsch, and Jonathan Wise. I learned about logarithmic stable maps at the 2014 summer school on toric degenerations in Norway, and I thank Lars Halle and Johannes Nicaise for creating that opportunity. 
 The text has benefited from the careful comments of an anonymous referee.

\section{Preliminaries}\label{sec: background}

We provide a brief review of skeletons of non-archimedean analytic spaces and of logarithmic Gromov--Witten theory.
\subsection{Berkovich spaces and skeletons}\label{sec: background-1} Let $K$ be a field, complete with respect to a possibly trivial rank-$1$ valuation $\nu:K^\times\to \RR$. We assume throughout that $K$ is equicharacteristic $0$ with algebraically closed residue field. Let $X$ be a finite-type, irreducible, and separated $K$-scheme. The Berkovich analytification $X^{\an}$ of $X$ is a locally ringed space that plays a role in non-archimedean geometry that is analogous to the usual analytification of a complex variety.  As a set, $X^{\an}$ consists of maps $\spec(L)\to X$, where $(L,\nu_L)$ is a valued extension field of $(K,\nu)$, subject to the equivalence relation generated by declaring
\[
x = (\spec(L)\to X)\sim x' = (\spec(L')\to X),
\]
if $L'$ is an extension of $L$, the valuation $\nu_{L'}$ restricts to $\nu_L$ on $L$, and the scheme theoretic points underlying $x$ and $x'$ are identified under the inclusion $X(L)\hookrightarrow X(L')$. 

\begin{theorem}[{Berkovich~\cite[Chapter 4]{Ber90} and~\cite{Ber99}}]
With the assumptions on $X$ as above, the space $X^{\an}$ is a path connected, locally compact, Hausdorff topological space. Moreover, $X^{\an}$ admits a deformation retraction onto a finite-type polyhedral complex. If $X$ is proper, then $X^{\an}$ is topologically compact. 
\end{theorem}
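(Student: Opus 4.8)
The plan is to dispatch the purely topological assertions first, then to derive path-connectedness from the deformation retraction, whose construction is the substantive point.

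\textbf{Topology and the proper case.} Over an affine open $\Spec A\subseteq X$ the analytification $\cM(A)$ is the set of multiplicative seminorms on $A$ restricting to $\nu$ on $K$, topologized as a subspace of $\prod_{a\in A}\RR_{\ge 0}$; it is manifestly Hausdorff, since two distinct seminorms disagree on some element of $A$. Fixing generators $a_1,\dots,a_m$ of $A$, each point of $\cM(A)$ lies in a subset of the form $\{x:|a_i(x)|\le r_i\}$, which is the Berkovich spectrum of an affinoid algebra and hence compact; exhausting $\cM(A)$ by such compacta as the $r_i\to\infty$ yields local compactness, a property that survives gluing over a finite affine cover. For the last assertion I would invoke Chow's lemma to produce a surjective proper morphism $X'\to X$ with $X'$ projective: then $X'^{\an}$ is a closed subspace of some $(\PP^n_K)^{\an}$, which is compact as a finite union of Berkovich spectra of affinoid algebras, and $X^{\an}$ is the continuous image of the compact space $X'^{\an}$, hence compact. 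Connectedness of $X^{\an}$, which I will use below, reduces for irreducible $X$ --- via a Noether normalization $K[T_1,\dots,T_d]\hookrightarrow A$ and the finiteness and surjectivity of $\cM(A)\to(\A^d_K)^{\an}$ --- to the connectedness of affine space $(\A^d_K)^{\an}$.

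\textbf{The deformation retraction.} This is the heart of the matter, and the construction I have in mind proceeds from a sufficiently fine model of $X$. Using Hironaka's resolution of singularities --- available precisely because $K$ has equicharacteristic $0$ --- I would choose an open immersion $X\hookrightarrow\overline X$ into a smooth proper $K$-scheme with $\overline X\setminus X$ a simple normal crossings divisor, and, when $\nu$ is nontrivial, a semistable (or at least toroidal) model $\mathfrak X$ over $K^\circ$; here the assumption that the residue field is algebraically closed of characteristic $0$ is what makes such a model available without the ramification obstructions of mixed characteristic. The boundary complex --- respectively, the dual complex of the special fiber of $\mathfrak X$ --- is a finite polyhedral complex $\Delta$ with integral affine structure, its finiteness being quasi-compactness of $\overline X$. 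Inside $X^{\an}$ there sits the associated skeleton $\fS$, whose points are the monomial valuations centered along the strata and are canonically parametrized by $\Delta$. \'Etale-locally the pair is modelled on $\Spec K[x_1,\dots,x_n]$ with boundary $\{x_1\cdots x_k=0\}$, where one has the explicit retraction sending a seminorm to the monomial seminorm with the same values on $x_1,\dots,x_k$; since the transition functions between such charts are units of monomial type, these local retractions agree on overlaps and glue to a continuous map $\bm p:X^{\an}\to\fS$, and rescaling the boundary coordinates toward their values on $\fS$ produces the homotopy $X^{\an}\times[0,1]\to X^{\an}$ realizing the deformation retraction. This is the construction of Berkovich~\cite[Chapter 4]{Ber90}, carried out in the trivially valued setting by Thuillier~\cite{Thu07}. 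Path-connectedness then follows at once: $\fS$, being the continuous image of the connected space $X^{\an}$, is a connected polyhedral complex, hence path-connected, and concatenating the homotopy tracks with a path in $\fS$ joins any two points of $X^{\an}$.

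\textbf{Main obstacle.} The single genuinely hard input is the existence of the model --- an SNC compactification together with, in the nontrivially valued case, a semistable or toroidal integral model. This is exactly where resolution of singularities and semistable reduction are invoked, and where the equicharacteristic-$0$ and algebraically-closed-residue-field hypotheses are indispensable. Once such a model is fixed, the retraction and homotopy reduce to organizing the local monomial computations and verifying their compatibility on chart overlaps, following loc.\ cit.
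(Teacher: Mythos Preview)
The paper does not prove this theorem. It is stated in the preliminaries (Section~\ref{sec: background-1}) as a background result, attributed to Berkovich~\cite[Chapter 4]{Ber90} and~\cite{Ber99}, and no argument is given. There is therefore no proof in the paper to compare your proposal against.

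That said, your sketch is a reasonable outline of how the cited references establish the result, and your division into the elementary topological assertions versus the substantive retraction is the right one. Two remarks. First, in the nontrivially valued case your invocation of a global semistable model is stronger than what is available in the generality stated: Berkovich's argument in~\cite{Ber99} does not proceed via semistable reduction but via locally defined poly-stable fibrations, precisely to avoid hypotheses on $K$ beyond those given. Your approach is closer to what works cleanly in the trivially valued setting (Thuillier) or after base extension. Second, in the non-proper case you should say one more word: Thuillier's retraction is constructed on $\overline X^\beth = \overline X^{\an}$ and lands in the \emph{extended} cone complex $\overline\fS$; you need that it restricts to a retraction of the open subset $X^{\an}$ onto the open cone complex $\fS$, which follows because a point of $\overline X^{\an}$ retracts to a face at infinity precisely when its underlying scheme point lies in the boundary $\overline X\setminus X$. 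Neither point is a genuine gap in your strategy, but both are places where the sketch is looser than the actual proofs in the cited sources.
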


When $K$ is the trivially valued field $\CC$, there is an alternative analytification functor denoted $(\cdot)^{\beth}$, defined by Thuillier~\cite{Thu07}. For $X$ separated, $X^\beth$ consists of the subset of points of $X^{\an}$ represented by maps $\spec(L)\to X$ that extend to $\spec(R_L)\to X$, where $R_L$ is the valuation ring of $L$. Both spaces $X^{\an}$ and $X^\beth$ are equipped with structure sheaves of analytic functions, locally given by limits of rational functions on $X$. 

\begin{theorem}[{Thuillier~\cite{Thu07}}]
The space $X^\beth$ is a compact domain in $X^{\an}$ and admits a deformation retraction onto a finite-type polyhedral complex. 
\end{theorem}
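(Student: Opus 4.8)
The plan is to prove the two assertions separately, reducing each to an explicit local model. For compactness, note first that $X^\beth$ is exactly the set of points of $X^{\an}$ whose associated valuation admits a center on $X$; consequently, if $\{U_i\}$ is a finite affine cover of $X$, every point of $X^\beth$ lies in some $U_i^\beth$, so $X^\beth=\bigcup_i U_i^\beth$ and it suffices to see that each $U_i^\beth$ is compact. Writing $U_i=\Spec A$ and choosing generators $a_1,\dots,a_N$ of $A$ over $\CC$ gives a closed immersion $U_i\hookrightarrow\A^N$, under which $U_i^\beth$ is identified with the set of multiplicative seminorms on $\CC[x_1,\dots,x_N]$ that vanish on the ideal of $U_i$ and satisfy $|x_j|\le 1$ for all $j$. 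As $\CC$ carries the trivial valuation, $|x_j|\le 1$ forces $|f|\le 1$ for every polynomial $f$, so $U_i^\beth$ embeds as a closed subset --- cut out by the closed conditions of multiplicativity, the ultrametric inequality, and $|x_j|\le 1$ --- of the compact product $\prod_{f\in\CC[x]}[0,1]$, hence is compact. The same local coordinates exhibit $X^\beth$ as an analytic domain, locally of the form $\{\,|x_j|\le 1\,\}$; altogether $X^\beth$ is a closed compact analytic domain in $X^{\an}$. (When $X$ is proper the valuative criterion gives $X^\beth=X^{\an}$ outright, so the content lies in the non-proper case.)

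For the deformation retraction the essential case is a toroidal embedding. On an affine toric chart $V=\Spec\CC[S]$, with $S$ the character monoid and the dense torus as interior, the $[0,\infty]$-valued monomial valuations form a compactification $\overline{\Si}_V$ of the rational polyhedral cone $\Si_V=\Hom(S,\RR_{\ge0})$ sitting inside $V^\beth$, and there is an explicit retraction $V^\beth\to\overline{\Si}_V$ sending a seminorm $|\cdot|$ to the unique monomial valuation agreeing with it on all characters $\chi^m$, $m\in S$, together with a canonical homotopy joining the two through a family of seminorms built termwise from the expansion of a regular function. This construction depends only on the monoid $S$, i.e. only on the boundary log structure, so for a global toroidal embedding $U\subseteq X$ the chartwise retractions glue to a strong deformation retraction of $X^\beth$ onto a closed subset $\overline\fS$ identified with the compactified cone complex $\overline{\Si}(X)$ of the embedding --- a finite polyhedral complex, since the toroidal stratification is finite. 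For general separated finite-type $X$ I would use characteristic zero to reduce to this: normalize, then use Nagata compactification and Hironaka resolution to realize the normalization as the interior of a simple normal crossings pair, apply the toroidal construction, check that the homotopy preserves the locus of points with center over $X$ so that it restricts to a retraction of $X^\beth$ onto a polyhedral subcomplex, and finally descend through the (finite) gluing of $X^\beth$ from the $\beth$-space of the normalization.

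I expect the main obstacle to be the construction and gluing of the homotopy rather than the retraction map itself. One must produce the interpolating family of seminorms by an explicit functorial formula, verify that each member is genuinely multiplicative and that the family is jointly continuous in the point and in the time parameter, and then check that the formulas attached to two overlapping toric charts agree --- equivalently, that the homotopy is insensitive to the choice of toric coordinates and depends only on the monoid of effective boundary Cartier divisors through each stratum. A second genuinely technical point is compatibility with resolution: one must check that passing to a toroidal model via blow-ups and normalization does not alter $X^\beth$, and that the retraction constructed on the model descends to a retraction of $X^\beth$ itself rather than merely of the $\beth$-space of the model.
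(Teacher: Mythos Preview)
The paper does not give a proof of this statement: it is recorded as a theorem of Thuillier with a citation to \cite{Thu07}, and the surrounding text only unpacks the construction in the toric and toroidal cases that are needed later. So there is no ``paper's own proof'' to compare against.

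That said, your sketch is a reasonable reconstruction of how the result is actually established. The compactness argument is essentially complete: the identification of $U^\beth$ for affine $U=\Spec A$ with the Berkovich spectrum of $A$ under the trivial norm, realized as a closed subset of a Tychonoff cube, is the standard one, and the passage to a finite affine cover is valid since a finite union of compact subsets of the Hausdorff space $X^{\an}$ is compact. For the retraction, what you describe in the toroidal case is precisely Thuillier's construction in \cite[Section 3]{Thu07}: the skeleton is the set of monomial valuations, the retraction sends a seminorm to the monomial valuation matching it on characters, and the homotopy is given by an explicit formula interpolating through ``partial'' monomializations. Your honest list of obstacles --- joint continuity, multiplicativity of the intermediate seminorms, independence of chart, and gluing --- is exactly where the work in Thuillier's paper lies, so what you have is an accurate outline rather than a proof.

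One caution on the general (non-toroidal) case: Thuillier's paper treats toroidal embeddings, and the reduction you propose via Nagata plus Hironaka is the right idea, but the descent through normalization is not innocuous. For non-normal $X$ the map $(\widetilde X)^\beth\to X^\beth$ from the normalization need not be a homeomorphism (it can identify distinct points lying over the non-normal locus), so a retraction built upstairs does not automatically descend. You would need to argue that the homotopy is constant along the fibers of this finite map, or else restrict the statement to normal $X$ as the paper implicitly does when it later specializes to toroidal embeddings without self-intersection.
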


The functors $(\cdot)^{\an}$ and $(\cdot)^\beth$ extend to algebraic stacks of locally finite type, see~\cite{U14b,Yu14a} and~\cite[Section V.3]{U-thesis}. The retractions of analytic spaces onto polyhedral complexes or \textit{skeletons} include and generalize tropicalization for toric varieties. We briefly review this, and refer the reader to the survey~\cite{ACMUW} for a more complete discussion and references.

Let $M$ be a lattice with dual lattice $N$, and let $T = \spec(K[M])$ be the associated algebraic torus. Each point of $p \in T^{\an}$ gives rise to a morphism
\[
\val_p: K[M]\to L\xrightarrow{\nu_L} \RR\sqcup \{\infty\}.
\]
Since the valuation $\nu_L$ is multiplicative, restriction of $\val_p$ to $M$ furnishes a homomorphism of abelian groups $\trop(\val_p)\in\Hom_{\bf Ab}(M,\RR) \cong N_\RR$. This defines the \textit{tropicalization} map 
\[
\trop: T^{\an}\to N_\RR. 
\]

Let $X$ be a proper toric variety over a non-archimedean field associated to a complete fan $\Delta$ in $N_\RR$. Denote by $\overline \Delta$ the associated canonical compactification of $\Delta$, obtained as follows. Given a cone $\sigma\in \Delta$, let $S_\sigma$ be the dual monoid. Define the extended cone compactifying $\sigma$ by
\[
 \overline \sigma := \Hom_{\bf Mon}(S_\sigma,\RR_{\geq 0}\sqcup \{\infty\}).
\] 
The topology on $\RR_{\geq 0}\sqcup\{\infty\}$ is determined by the extended order topology, declaring $r<\infty$ for all $r\in \RR_{\geq 0}$. The set $\overline \sigma$ is given the topology of pointwise convergence. The extended cones are glued in the natural way to form an extended cone complex $\overline \Delta$ compactifying $N_\RR$. We refer to~\cite[Section 2]{ACP} and~\cite[Section 3]{Thu07} for further details on extended cone complexes and generalizations.

There is a continuous proper surjection
\[
\trop: X^{\an}\to \overline \Delta,
\]
that restricts to the tropicalization map defined above on the analytic torus $T^{\an}\subset X^{\an}$. Its image is referred to as the \textit{extended tropicalization of $X$}. The compactified fan $\overline \Delta$ is naturally stratified into vector spaces $N(\sigma) = N_\RR/\mathrm{span}(\sigma)$. These vector spaces $N(\sigma)$ are the images of locally closed strata $V(\sigma)\subset X$ under the map $\trop$, see Figure~\ref{fig: trop-p2}. 

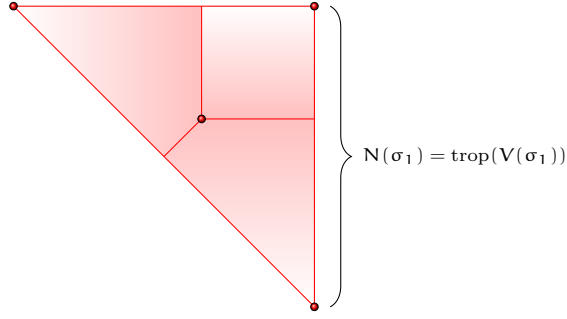
\begin{figure}
\begin{tikzpicture}

\fill[pink, path fading=north] (0.5,1.5)--(2,1.5)--(2,3)--(0.5,3) -- cycle;
\fill[pink, path fading=south] (0.5,1.5)--(2,1.5)--(2,-1)--(0,1) -- cycle;
\fill[pink, path fading=west] (0.5,1.5)--(0,1)--(-2,3)--(0.5,3) -- cycle;

\draw [red] (-2,3)--(2,3)--(2,-1)--(-2,3);

\draw[red] (0.5,3)--(0.5,1.5)--(2,1.5); \draw[red] (0.5,1.5)--(0,1);

\draw [ball color = red] (-2,3) circle (0.5mm);
\draw [ball color = red] (2,3) circle (0.5mm);
\draw [ball color = red] (2,-1) circle (0.5mm);
\draw [ball color = red] (0.5,1.5) circle (0.5mm);

\draw [decorate,decoration={brace,amplitude=8pt},xshift=-4pt,yshift=0pt] (2.35,3) -- (2.35,-1) node [black,midway,xshift=-0.6cm] {};

\node at (4,1) {\tiny $N(\sigma_1) = \trop(V(\sigma_1))$};
\end{tikzpicture}
\caption{The extended tropicalization of $\PP^2$ consists of $3$ extended cones. The vector space at infinity that is perpendicular to each ray $\sigma_i$ is the tropicalization of the locally closed stratum $V(\sigma_i)$. }
\label{fig: trop-p2}
\end{figure}

Let $S_\sigma$ be a toric monoid and consider the scheme $X = \spec(\CC\llbracket S_\sigma \rrbracket)$. The space $X^\beth$ consists of those valuations on $\CC\llbracket S_\sigma \rrbracket$ that are nonnegative on the monoid $S_\sigma$. As before, restriction of valuations to $S_\sigma$ determines a continuous map
\[
\trop: X^\beth\to \Hom(S_\sigma,\RR_{\geq 0}\sqcup \{\infty\}). 
\]
That is, tropicalization is narturally a map from $X^\beth$ to the canonical compactification $\overline \sigma$ of the dual cone $\sigma$ of the monoid $S_\sigma$. This tropicalization map can be generalized to toroidal embeddings over $\CC$. While the theory works in greater generality, we restrict ourselves to toroidal embeddings \textit{without self-intersection}~\cite[Section II.1]{KKMSD}. 

\begin{definition}
A \textbf{toroidal embedding} is a normal variety $X$ together with an open set $U\subset X$ with complemenet $D = X\setminus U$ such that, at each point $x\in X$, there is an affine toric variety $V(\sigma,x)$ and a point $t\in V(\sigma,x)$ together with an isomorphism of complete local rings
\[
\widehat{\mathscr O}_{X,x}\xrightarrow{\sim} \widehat{\mathscr O}_{V(\sigma,x),t},
\]
where the ideal of $D$ in $\mathscr O_{X,x}$ maps to the ideal of the toric boundary in $V(\sigma,x)$ under this isomorphism. 
\end{definition}

The definition is a systematization of the intuitive notion that $X$ is \textit{formally locally isomorphic to a toric variety}. A morphism $X\to Y$ of toroidal embeddings is said to be \textit{toroidal} if in local toric charts, it is a dominant equivariant morphism of toric varieties. A \textit{toroidal modification} is a toroidal morphism $X\to Y$ that is birational and given by a toric modification in local charts, i.e. by a subdivision of the corresponding fans. We refer the reader to~\cite[Section 1]{AK00} for details on toroidal morphisms.

Let $X$ be a toroidal embedding over $\CC$, with its trivial valuation. By work of Thuillier~\cite[Section 3.2]{Thu07}, the maps $\trop$ on each formal local chart glue to form a global tropicalization map
\[
\trop: X^\beth\to \overline \fS(X),
\]
where $\overline \fS(X)$ is the compactified cone complex associated to the fan of the toroidal embedding $X$, as defined in~\cite[Section II, p.71]{KKMSD}. This global tropicalization map is a deformation retraction. In~\cite[Section 6]{ACP}, Thuillier's deformation retraction is extended to toroidal Deligne--Mumford stacks. Given a toroidal Deligne--Mumford stack $\cX$ with coarse space $X$, the authors of loc. cit. produce a strong deformation from $X^{\beth}$ onto a generalized extended cone complex $\overline \fS(\cX)$. It follows from~\cite[Proposition 5.7]{U14b}, that the underlying topological space of the analytic stack $\cX^{\beth}$ coincides with $X^{\beth}$. The formation of the skeleton of a toroidal stack is functorial for toroidal morphisms.

Toroidal embeddings without self-intersection over $\CC$ are precisely the logarithmically regular varieties whose log structure is defined on the Zariski site, rather than the \'etale site. In~\cite[Theorems 1.1,1.2]{U13} this tropicalization map is generalized to a functorial tropicalization for fine and saturated \'etale logarithmic schemes over $\CC$.

\subsection{Logarithmic stable maps} In this section we review some basic notions of logarithmic stable maps. We freely use the Kato--Fontaine--Illusie theory of logarithmic geometry in this paper, and refer the reader to the survey~\cite{ACGHOSS}, in addition to K. Kato's seminal article~\cite[Sections 1-4]{Kat89} for background. See~\cite{ACMUW} for a survey on non-archimedean logarithmic geometry.

A \textit{pre-stable log curve} is a morphism $\pi:X\to S$ of fine and saturated logarithmic schemes such that $f$ is logarithmically smooth, flat, with reduced, connected fibers of pure dimension $1$. A \textit{marked pre-stable log curve} in addition comes with sections $s_i:\underline S\to \underline X$, that are distinct from the double points of the fibres. Moreover, we require that $X$ carries the divisorial log structure along the sections $s_i$. A \textit{marked stable log curve} is a marked pre-stable log curve whose underlying marked curve is stable in the usual sense.

\begin{center}\textit{
$(\star)$ For the rest of the paper, unless otherwise stated, we assume that the target variety $X$ is a projective toric variety associated to a fan $\Delta$.}
\end{center}

\begin{definition}
A \textbf{logarithmic stable map} over $S$ is a pre-stable marked log curve $(C\to S,\{s_i\})$, together with a map $f:C\to X$ of logarithmic schemes, such that the underlying map $\underline f: \underline C\to \underline X$ is an ordinary stable map. 
\end{definition}

At the marked sections $s_i$ of the family $C\to S$, the relative characteristic sheaf of $C$ over $S$ has stalk $\NN$. If the map $f$ sends $s_i$ to the stratum $V(\sigma)$ of $X$, the logarithmic structure gives a homomorphism $c_i: M_\sigma\to \NN$, which by dualizing, is an integral element of the cone $\sigma\subset N_\RR$. 

\begin{definition}
The collection of homomorphisms $\{c_i\}$ is referred to as the \textbf{contact order} of the stable map $f$, and is denoted $c$. A marked point $p_i$ has \textbf{trivial contact order} if the homomorphism $ c_i:M\to \NN$ is the zero map. 
\end{definition}

\begin{remark}{\bf (Interpretation of the contact order)}
Suppose the curve $C$ meets the dense torus, and the marked point $p_i$ maps to the relative interior of a toric boundary divisor $D$. The characteristic sheaf of a divisor $D$ is isomorphic to $\NN$, and can be understood as the multiplicative monoid generated by a function $g$ that cuts out $D$ locally in $X$. The contact order is the order of vanishing of the image of $C$ at $p_i$, i.e. the order of tangency between $D$ and $C$ at the marked point $p_i$. A point $p_i$ with trivial contact order is mapped to the dense torus of $X$.The logarithmic framework allows one to understand this order of tangency when the image of $C$ lies in $X$. 
\end{remark}

As a convention, we separate the $m$ sections with trivial contact orders from the $n$ sections with nontrivial contact orders, so our logarithmic stable maps will carry $(m+n)$ marked sections. 

\begin{definition}
Let $[f:C\to X]$ be a logarithmic stable map. The collection $\Gamma$ of the genus $g$ of $C$, number $m$ of marked points with trivial and $n$ with non-trivial contact orders, the contact orders of these points, and the curve class $\beta$ will be referred to as the \textbf{discrete data} of $f$.
\end{definition}

For toric targets the contact order uniquely determines the curve class $\beta$. Indeed, the operational Chow class of any curve on a complete toric variety is determined by the degrees of its intersections with the boundary divisors by~\cite[Theorem 2.1]{FS97}. 

\begin{theorem}[{Abramovich--Chen~\cite{AC11} and Gross--Siebert~\cite{GS13}}]
The category of stable logarithmic maps to $X$ forms a logarithmic Deligne--Mumford stack that is finite and representable over the Kontsevich space $\Mbar_{g,n+m}(\underline X,\beta)$. 
\end{theorem}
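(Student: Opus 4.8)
The plan is to exhibit the forgetful morphism
$\Phi\colon \Lsm_\Gamma(X)\to \Mbar_{g,n+m}(\underline X,\beta)$, sending a logarithmic stable map $[f\colon C\to X]$ to the underlying ordinary stable map $[\underline f\colon \underline C\to \underline X]$, and to prove that $\Phi$ is representable and finite. Algebraicity, the Deligne--Mumford property, finite type, and separatedness of $\Lsm_\Gamma(X)$ then descend from the corresponding properties of the Kontsevich space (Behrend--Manin), since all of these are inherited through a representable, separated, quasi-compact morphism to a Deligne--Mumford stack. As a preliminary step I would check \'etale descent: the data $(C\to S,\{s_i\},f)$ and the defining conditions — logarithmic smoothness, flatness, stability of $\underline f$, prescribed contact orders $c$ and curve class $\beta$ — are all local on the base in the \'etale topology, so that $\Lsm_\Gamma(X)$ is at least a stack over the category of fine and saturated logarithmic schemes.

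The technical core is the theory of \emph{minimal} (equivalently \emph{basic}) logarithmic structures. From a logarithmic stable map $f$ over $S$ one extracts its combinatorial \emph{type}: the dual graph of a geometric fibre of $C$, decorated at each vertex, edge, and leg by the stratum of $X$ containing the image of the corresponding stratum of $C$, together with the contact orders. From this type one builds a fine, saturated, sharp monoid $\overline M_\tau$ — the monoid dual to the rational polyhedral cone of tropical stable maps of that type to $\overline\Delta$ — and a canonical logarithmic stable map over $\operatorname{Spec}\ZZ[\overline M_\tau]$ with the logarithmic structure induced by $\overline M_\tau$, through which $f$ factors via a \emph{unique} morphism on characteristic sheaves. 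I would reproduce the Gross--Siebert and Abramovich--Chen argument that a minimal object exists, is unique up to unique isomorphism, and that minimality is preserved under base change. This yields an equivalence between $\Lsm_\Gamma(X)$, as a category fibered over logarithmic schemes, and the category fibered over ordinary schemes whose $S$-points are minimal logarithmic stable maps over $S$ together with their canonical logarithmic structure; the question of algebraicity is thereby transported to ordinary schemes.

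Next I would analyze $\Phi$ over a fixed $T$-point of $\Mbar_{g,n+m}(\underline X,\beta)$, i.e.\ an ordinary stable map $\underline g\colon \underline{\mathcal C}\to\underline X$ over a scheme $T$, and describe the fiber product $\Lsm_\Gamma(X)\times_{\Mbar}T$. Working \'etale-locally on $T$, where the combinatorial type of $\underline g$ is locally constant, the logarithmic structure on $\mathcal C$ is determined up to the choice of one smoothing parameter $\delta_e$ per node, each valued in the pulled-back characteristic monoid of the base; and a logarithmic enhancement of $\underline g$ with discrete data $\Gamma$ amounts to a system of monoid homomorphisms out of the dual monoids $S_\sigma$ at the points of $\mathcal C$ mapping to $V(\sigma)$, compatible along $\mathcal C$ and subject to the fixed contact orders — precisely the constraints packaged in $\overline M_\tau$. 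The decisive point, which I expect to be the main obstacle, is the rigidity statement: with $\underline g$ and $\Gamma$ fixed there is \emph{no moduli} in the added logarithmic data, so $\Lsm_\Gamma(X)\times_{\Mbar}T$ is represented by a scheme that is affine, separated, and quasi-finite over $T$ — there are finitely many admissible combinatorial types, and for each a unique minimal logarithmic structure. Establishing this is the heart of~\cite{AC11,GS13}, and is where the combinatorics of logarithmic geometry does all of the work.

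Finally, having shown that $\Phi$ is representable, affine, separated, and quasi-finite, it remains to prove $\Phi$ proper. This I would check by the valuative criterion over a complete discrete valuation ring $R$: the underlying stable map over the generic point of $\operatorname{Spec} R$ already extends uniquely over $\operatorname{Spec} R$ since it is pulled back from $\Mbar_{g,n+m}(\underline X,\beta)$, and the logarithmic enhancement extends over $\operatorname{Spec} R$ by the valuative criterion for logarithmically smooth curves together with the deformation theory of logarithmic maps, uniquely by the uniqueness of minimal objects. An affine proper morphism is finite, so $\Phi$ is finite and representable; consequently $\Lsm_\Gamma(X)$ is a finite-type logarithmic Deligne--Mumford stack — the logarithmic structure being that of the universal minimal family — finite and representable over $\Mbar_{g,n+m}(\underline X,\beta)$, as claimed. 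Alternatively, one can cite the properness of $\Lsm_\Gamma(X)$ proved in loc.\ cit.\ and deduce finiteness of $\Phi$ from quasi-finiteness plus properness; or, in the spirit of~\cite{AW}, factor the logarithmic map through the Artin fan $\mathcal A_X$, reducing algebraicity to that of maps to the combinatorial stack $\mathcal A_X$.
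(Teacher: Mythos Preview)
The paper does not prove this theorem. It is quoted as a background result, attributed in the theorem header to Abramovich--Chen~\cite{AC11} and Gross--Siebert~\cite{GS13}, and the surrounding text only discusses the notion of minimality informally before moving on. There is therefore no ``paper's own proof'' to compare against.

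Your outline is a reasonable sketch of the strategy actually carried out in those references: reduce from logarithmic to ordinary stacks via the existence and uniqueness of minimal (basic) logarithmic structures, then show the forgetful morphism to the Kontsevich space is representable and finite. That is precisely what the cited works do, and the paper relies on their results without reproving them. If anything, you could simply cite the references as the paper does rather than reproduce the argument.
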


We point out a subtle issue at play. The moduli functor of stable logarithmic maps describes a stack over the category of fine and saturated logarithmic schemes, rather than schemes. In order to apply standard techniques from algebraic geometry, one wishes to understand the functor parametrizing logarithmic stable maps over test schemes $\underline S$ without any logarithmic structure, i.e. as an algebraic stack with logarithmic structure, rather than stack over logarithmic schemes. This problem is solved by the concept of \textit{minimality}. Given a map $\underline S\to \Lsm_\Gamma(X)$, there is a \textit{minimal} logarithmic structure that one may place on $\underline S$, such that the map $\underline S\to \Lsm_\Gamma(X)$ parametrizes minimal logarithmic stable maps over $S$. In other words, given any logarithmic stable map over a logarithmic base $(S,\mathpzc M_S)$, we wish to find a minimal structure $(S,\mathpzc M_S^{\min})$ over which the given map factors. See~\cite[Section 2]{AC11} for a discussion in the context of logarithmic stable maps, and~\cite{Gi12} for a general categorical discussion. We warn the reader that some authors, including Gross and Siebert~\cite[Section 1.5]{GS13} and Kim~\cite{Kim08} refer to minimality as the \textit{basicness} condition.

For us, the following existence\footnote{Strictly speaking, Abramovich and Chen~\cite{AC11} show this result when $X$ admits a generalized Deligne--Faltings structure, so one must first check this hypothesis is satisfied for toric varieties. This is done in~\cite[Proposition A.4]{CS12}} result will suffice~\cite{AC11,GS13}. There is a proper algebraic stack $\Lsm_\Gamma(X)$ which represents the moduli functor which, for a test scheme $\underline S$, returns the groupoid of minimal logarithmic stable maps over $\underline S$ with discrete data $\Gamma$. As with standard Gromov--Witten theory, the stack $\Lsm_\Gamma(X)$ admits \textit{evaluation morphisms} to the strata of $X$. In this paper we will be concerned only with evaluations at the \textit{ordinary} marked points
\[
ev_i: \Lsm_\Gamma(X)\to X,
\]
i.e. those marked points with trivial contact orders with the toric boundary.

\section{The skeleton of $\Lsm$}\label{lsm-skeleton}

\subsection{Tropical stable maps} Given a finite tree $\underline G$, we refer to the non-leaf edges as \textit{internal edges}, and to the $1$-valent vertices adjacent to the leaves as \textit{infinite points}.

An \textit{abstract $n$-marked rational tropical curve} is a tree $\underline G$ with $n$ marked leaf edges $e_{p_1},\ldots, e_{p_n}$ and a length function $\ell: E(G)\to \RR_{\geq 0}\sqcup \{\infty\}$. We require that $\ell(e_{p_i}) = \infty$ for all leaves $e_{p_i}$. 

This data produces a topological space $G$ with a singular metric. To each edge $e$ of finite length $\ell(e)$ associate the metric space $[0,\ell(e)]$. To $e_{p_i}$ associate the extended interval $[0,\infty]$. If $e$ is an internal edge adjacent to vertices $v_1$ and $v_2$, such that $\ell(e) = \infty$, then associate to $e$ the space $[0,\infty]\sqcup_\infty [0,\infty]$. These extended intervals associated to edges glue along the adjacencies prescribed by $\underline G$, so have determined a topological space $G$ with a singular metric induced by the length. When $(\underline G, \ell)$ are clear from context, we will refer to $G$ itself as an \textit{abstract $n$-marked rational tropical curve}, see Figure~\ref{fig: tropical-curve}.

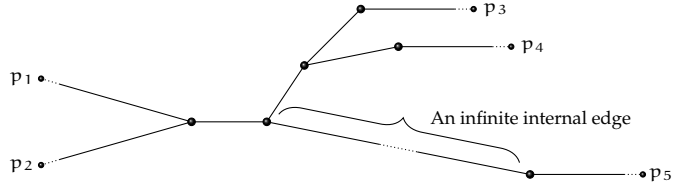
\begin{figure}[h!]
\begin{tikzpicture}
\draw (1,0)--(2.5,-0.3); \draw[densely dotted] (2.5,-0.3)--(3,-.4); \draw (3,-.4)--(4.5,-.7);

\draw (0,0)--(1,0)--(1.5,0.75)--(2.75,1);
\draw (1.5,0.75)--(2.25,1.5);

\draw (0,0)--(-1.75,0.5); \draw[densely dotted] (-1.75,0.5)--(-2,0.575);
\draw (0,0)--(-1.75,-0.5); \draw[densely dotted] (-1.75,-0.5)--(-2,-0.575);
\draw (2.75,1)--(4,1); \draw[densely dotted] (4,1)--(4.25,1);
\draw (2.25,1.5)--(3.5,1.5); \draw[densely dotted] (3.5,1.5)--(3.75,1.5);
\draw (4.5,-0.7)--(5.75,-0.7); \draw[densely dotted] (5.75,-0.7)--(6,-0.7);

\draw [ball color = black] (0,0) circle (0.5mm);
\draw [ball color = black] (1,0) circle (0.5mm); 
\draw [ball color = black] (1.5,0.75) circle (0.5mm);
\draw [ball color = black] (2.75,1) circle (0.5mm);
\draw [ball color = black] (2.25,1.5) circle (0.5mm);
\draw [ball color=black] (4.5,-.7) circle (0.5mm);

\draw [ball color=black] (-2,0.575) circle (0.35mm);
\draw [ball color=black] (-2,-0.575) circle (0.35mm);
\draw [ball color=black] (4.25,1) circle (0.35mm);
\draw [ball color=black] (3.75,1.5) circle (0.35mm);
\draw [ball color=black] (6,-0.7) circle (0.35mm);

\node at (-2.25,0.575) {\textnormal{\tiny $p_1$}};
\node at (-2.25,-0.575) {\textnormal{\tiny $p_2$}};
\node at (4.55,1) {\textnormal{\tiny $p_4$}};
\node at (4.05,1.5) {\textnormal{\tiny $p_3$}};
\node at (6.25,-0.7) {\textnormal{\tiny $p_5$}};

\draw [decorate,decoration={brace,amplitude=8pt},xshift=-4pt,yshift=0pt] (1.3,0.1) -- (4.5,-0.55) node [black,midway,xshift=1.75cm,yshift=0.25cm] {\textnormal{\tiny An infinite internal edge}};
\end{tikzpicture}
\caption{An abstract nodal $5$-marked tropical tropical curve. This curve has a single internal edge of infinite length.}
\label{fig: tropical-curve}
\end{figure}

If all non-leaf edges have finite length, we say that $G$ is \textit{smooth} and otherwise $G$ is called \textit{nodal}. In this paper we will not consider tropical curves of higher genus. 

\begin{remark}
Our terminology of smooth and nodal is motivated by the theory of Berkovich curves. The smooth tropical curves above are precisely those that arise as skeletons of marked semistable models for $\PP^1$ over a rank-$1$ valuation ring. Nodal tropical curves appear as skeletons of marked semistable models whose generic fiber is nodal. See for instance~\cite[Section 5]{BPR}. 
\end{remark}

\begin{definition}~\label{def: trop-stable-map}
A \textbf{tropical stable map from a smooth rational curve to $\overline \Delta$} is a genus $0$ abstract smooth marked tropical curve $G$, together with a continuous proper map
\[
f:G\to \overline \Delta,
\]
such that
\begin{enumerate}[(1)]
\item The set $f^{-1}(\overline \Delta\setminus \Delta)$ is a union of infinite points of $G$.
\item Each edge $e$ of $G$ is mapped to a single extended cone $\overline\sigma_e$ of $\overline \Delta$.
\item For every edge $e$ in $G$, $f(e)$ has rational slope $u_e\in N$. Moreover the map \[
f|_e: e\to f(e) 
\]
is linear of integer slope $w_e$, defined with respect to the primitive integral vector in the direction of $f(e)$. We refer to the absolute value of this slope as the \textbf{expansion factor} of $f$ along $e$. The \textbf{contact order} of $f$ along $e$ is the quantity $c_e = w_eu_e$.
\item The polyhedral complex $f(G)\cap N_\RR$ is a balanced weighted polyhedral complex, where an edge $e$ is given weight equal to the expansion factor of $f$ at $e$.
\item[(Stability)] If $v\in f(G)$ is a divalent vertex, then either (1) $f^{-1}(v)$ consists of a vertex of $G$ of valence at least $3$, or (2) in a neighborhood of $v$ in $f(G)$, $v$ is the unique intersection of $f(G)$ with an extended cone $\overline \sigma\in \overline \Delta$. 
\end{enumerate}
Let $e$ be a marked edge of $G$, such that the infinite point of $e$ is mapped to the locally closed stratum $N(\sigma_e)$. The edge $f(e)$ is parallel to a rational ray in $\sigma$ with primitive generator $u_e$. Define the \textbf{contact order} of $f$ along $e$ to be the quantity $c_e = w_e u_e\in N$.
\end{definition}

\begin{figure}[h!]
\begin{tikzpicture}
\fill[pink, path fading=north] (0,0)--(0,2)--(2,2)--(2,0) -- cycle;
\fill[pink, path fading=south] (0,0)--(2,0)--(2,-1.414)--(-1.414,-1.414) -- cycle;
\fill[pink, path fading=west] (0,0)--(0,2)--(-1.414,2)--(-1.414,-1.414) -- cycle;

\begin{scope}[shift = {(5,0)}]
\fill[pink, path fading=north] (0,0)--(0,2)--(2,2)--(2,0) -- cycle;
\fill[pink, path fading=south] (0,0)--(2,0)--(2,-1.414)--(-1.414,-1.414) -- cycle;
\fill[pink, path fading=west] (0,0)--(0,2)--(-1.414,2)--(-1.414,-1.414) -- cycle;
\end{scope}

\draw[->,red] (0,0)--(0,2);
\draw[->,red] (0,0)--(2,0);
\draw[->,red] (0,0)--(-1.414,-1.414);

\draw (2.5,1.5)--(1,1.5)--(1,3);
\draw (1,1.5)--(-0.5,0);

\draw [ball color=black] (1,1.5) circle (0.5mm);
\draw [ball color=black] (0,0.5) circle (0.5mm);

\draw[->,red] (5,0)--(5,2);
\draw[->,red] (5,0)--(7,0);
\draw[->,red] (5,0)--(3.686,-1.414);

\draw (7.5,1.5)--(6,1.5)--(6,3);
\draw (6,1.5)--(4.5,0);

\draw [ball color=black] (6,1.5) circle (0.5mm);
\draw [ball color=black] (5,0.5) circle (0.5mm);
\draw [ball color=black] (6,2.5) circle (0.5mm);
\node at (4.75,2.5) {\tiny \begin{tabular}{c} An unstable \\ divalent vertex \end{tabular}};

\draw[dashed] (6,2.5)--(9,3.5);
\draw[dashed] (6,2.5)--(9,1.5);

\draw [color = black] (9,2.5) circle (0.8);
\draw (8.2,2.5) to [bend right] (9.8,2.5);
\draw [dotted] (8.2,2.5) to [bend left] (9.8,2.5);
\draw [ball color=black] (9,3.3) circle (0.5mm);
\draw [ball color=black] (9,1.7) circle (0.5mm);

\draw [ball color=red] (0,0) circle (0.5mm);
\draw [ball color=red] (5,0) circle (0.5mm);
\end{tikzpicture}
\caption{A stable (left) and unstable (right) $3$-marked tropical map to $\Delta_{\PP^2}$. The unstable vertex corresponds to a contracted twice marked $\PP^1$.}
\label{fig: stable-unstable}
\end{figure}
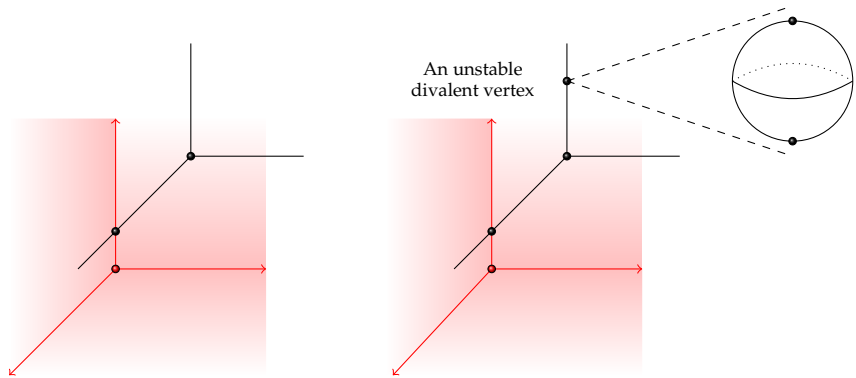

\begin{remark}
A few remarks are in order concerning this definition. We note that (2) above has usually been ignored in previous works~\cite{GKM07,Mi03}. It is equivalent to the statement that $f$ is a map of polyhedral complexes, and can always be achieved after a unique minimal subdivision of $G$. These subdivisions will affect the cone complex structure of the moduli space of maps which is important for our purposes. The \textit{stability condition} may be unfamiliar to the reader. The first part of the stability condition is classical: a contracted rational component must have $3$ special points. The second part of the stability condition ensures that no twice marked component of the special fiber of a degenerating map is contracted. In Theorem~\ref{thm: pointwise-trop} we will prove that a family of logarithmic maps has stable special fiber if and only if its tropicalization is stable. See Figure~\ref{fig: stable-unstable}.
\end{remark}

\begin{definition}
Let $f$ be a tropical stable map. \medskip

\noindent
The \textbf{discrete data} $\Gamma$ of $f$ is the number $n+m$ of marked edges of $G$, together with the contact orders $c_e\in N$ of each marked edge $e$. \medskip

\noindent
The \textbf{combinatorial type of $f$} is the following data.
\begin{enumerate}
\item The underlying combinatorial graph $\underline G$ of $G$, including the labeling of the $m+n$ marked edges.
\item For each vertex $v$, the cone $\sigma_v$ containing $v$.
\item For each edge $e$, the contact order $c_e$ of $f$ at $e$.
\end{enumerate}
\end{definition}


\subsection{Constructing the moduli space of tropical maps}\label{sec: trop-moduli} Let $\Theta$ be a combinatorial type for a tropical stable map. Suppose $\Theta$ has $n$ marked infinite edges with non-zero contact order and $m$ infinite edges with zero contact order.  Denote by $\underline G_\Theta$ the underlying combinatorial source graph, by $u_e\in N$ the primitive integral vector parallel to which the edge $e\in G$ maps, and by $\sigma_v$ the cone containing the vertex $v$.

Given a tropical curve $G$, let $G^{\mathrm{stab}}$ be the \textit{stabilization} of $G$ defined as follows. Consider a two valent vertex $v$ in $G$ with incident edges $e_1$ and $e_2$ and adjacent vertices $u_1$ and $u_2$. We may ``straighten'' $G$ by deleting both $e_1$ and $e_2$, connecting $u_1$ and $u_2$ by a single new edge of length $\ell(e_1)+\ell(e_2)$. This may be visualized by simply erasing the $2$-valent vertex. The resulting tropical curve is the stabilization of $G$. The \textit{overvalence} $\mathrm{ov}(G)$ of $G$ is the sum over finite vertices of $G^{\mathrm{stab}}$
\[
\mathrm{ov}(G)=\sum_{v\in V(G^{\mathrm{stab}})} \mathrm{deg}(v)-3.
\]

\begin{proposition}
The collection of all tropical stable maps from smooth curves with fixed combinatorial type $\Theta$ is a rational polyhedral cone $\sigma_\Theta$ of dimension $\dim X-3+m+n-\mathrm{ov}(G)$. 
\end{proposition}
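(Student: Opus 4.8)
\noindent\emph{Strategy.} The plan is to realize the set of tropical stable maps of a fixed type $\Theta$ as the set of real points of a linear system cut out by rational linear inequalities, and then to extract the dimension from tree combinatorics.

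Fix the combinatorial type $\Theta$. This pins down the combinatorial tree $\underline G$ with its marked leaves, the cone $\sigma_v\in\Delta$ containing each vertex $v$, and for every edge $e$ the contact order $c_e=w_eu_e\in N$; in particular the primitive direction $u_e$ and the expansion factor $w_e$ are determined. I would first observe that a tropical stable map $f$ of type $\Theta$ is determined by the positions $f(v)\in N_\RR$ of the finitely many finite (non-leaf) vertices of $G$: on each bounded edge $f$ is linear with the prescribed slope, and each marked leaf runs off to infinity in the direction dictated by $c_e$. Equivalently, after choosing a base vertex $v_0$, the map $f$ is recovered from the data $\bigl(f(v_0),(\ell_e)_e\bigr)$, where $e$ ranges over the bounded edges of $G$ and $\ell_e\ge 0$ is the length of $e$, since propagating along the tree writes each $f(v)$ as $f(v_0)$ plus the integral linear combination $\sum_e \pm\,\ell_e c_e$ over the path from $v_0$ to $v$. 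Conversely, every such tuple with $\ell_e\ge 0$ and with all resulting $f(v)$ lying in the prescribed cone $\sigma_v$ gives a tropical map of type $\Theta$: the balancing condition and the stability condition of Definition~\ref{def: trop-stable-map} are already encoded in $\Theta$ and impose nothing further on the tuple.

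This identifies the locus of type-$\Theta$ maps with the subset $\sigma_\Theta\subseteq N_\RR\times\RR_{\ge 0}^{E}$ (here $E$ is the number of bounded edges of $G$) cut out by the inequalities $\ell_e\ge 0$ together with the membership conditions $f(v)\in\sigma_v$. Since each $f(v)$ is a linear function of $\bigl(f(v_0),(\ell_e)\bigr)$ with integral coefficients and each $\sigma_v$ is a rational polyhedral cone, these amount to finitely many rational linear inequalities, so $\sigma_\Theta$ is a rational polyhedral cone, stable under positive scaling and Minkowski sum, whose relative interior is exactly the locus of maps of type $\Theta$. For the dimension, the ambient cone $N_\RR\times\RR_{\ge 0}^{E}$ has dimension $\dim X+E$, and the conditions that actually cut this down are the membership conditions at the two-valent vertices of $G$: by the minimal-subdivision convention behind condition (2) of Definition~\ref{def: trop-stable-map}, these are precisely the points at which the edges of the stabilization $G^{\mathrm{stab}}$ cross walls of $\Delta$, and each is pinned to a facet of the cone carrying its incident edges, so the extra length coordinate created by subdividing that edge is exactly offset by one rational equation. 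This leaves $\dim\sigma_\Theta=\dim X+E^{\mathrm{stab}}$, where $E^{\mathrm{stab}}$ is the number of bounded edges of $G^{\mathrm{stab}}$. A short tree count then finishes: writing $V^{\mathrm{stab}}$ for the number of finite vertices of $G^{\mathrm{stab}}$ one has $E^{\mathrm{stab}}=V^{\mathrm{stab}}-1$ and, counting edge-ends, $\sum_{v\in V(G^{\mathrm{stab}})}\deg(v)=2E^{\mathrm{stab}}+(m+n)$, so
\[
\mathrm{ov}(G)=\sum_{v}\bigl(\deg(v)-3\bigr)=(m+n)-3-E^{\mathrm{stab}},
\]
and therefore $\dim\sigma_\Theta=\dim X+E^{\mathrm{stab}}=\dim X-3+m+n-\mathrm{ov}(G)$.

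The step I expect to be the main obstacle is the careful bookkeeping around the two-valent vertices: one must check that each sub-edge created by the minimal subdivision contributes exactly one new length coordinate and exactly one new facet-membership equation, so that its net effect on the dimension is zero, and that no further collapse of dimension takes place. A secondary technical point is confirming that $\sigma_\Theta$ actually attains the stated dimension — equivalently, that its relative interior is nonempty — for which one uses that the $c_e$ balance at every vertex, so that the defining linear system is consistent.
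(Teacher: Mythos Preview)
Your argument is correct and runs closely parallel to the paper's, but with a somewhat different parametrization and a more self-contained dimension count. The paper embeds $\sigma_\Theta$ in the ambient cone $\tau_\Theta=\prod_v\sigma_v\times\RR_{\ge 0}^E$, so that the membership conditions $f(v)\in\sigma_v$ are absorbed into the ambient, and then cuts out $\sigma_\Theta$ by the edge-compatibility \emph{equations} $v_2-v_1=\ell(e)\,c_e$. You instead exploit the tree structure up front to solve those very equations, parametrizing by a root position in $N_\RR$ together with the edge lengths, and then impose the vertex-membership conditions $f(v)\in\sigma_v$ as the defining \emph{inequalities}. These two descriptions are linearly equivalent---yours is exactly what results from eliminating the redundant vertex coordinates in the paper's ambient via the tree relations. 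For the dimension, the paper passes to $G^{\mathrm{stab}}$ and simply cites~\cite[Section 1]{NS06}, whereas you supply the elementary Euler-characteristic identity $E^{\mathrm{stab}}=m+n-3-\mathrm{ov}(G)$ directly; this is a clean substitute for that citation.

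One caveat worth making explicit, since you flag it yourself as the main obstacle: your bookkeeping assumes that each two-valent vertex lies on a codimension-one wall of $\Delta$ and that each vertex of valence $\ge 3$ lies in a top-dimensional cone, so that the former contributes exactly one equation and the latter none. This holds precisely for the \emph{maximal} combinatorial types; for their faces---for instance the type in Section~\ref{sec: extended-example} where the trivalent vertex of a tropical line sits at the origin of $\Delta_{\PP^2}$---the membership conditions impose more equations and $\sigma_\Theta$ has strictly smaller dimension than the formula predicts. The same caveat applies to the paper's citation-based argument, so the stated formula is really the dimension of the top-dimensional cells of the complex (equivalently, an upper bound attained by the generic types).
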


\begin{proof}
Let $E$ be the number of bounded edges in the source graph $G$ of $\Theta$. We will describe $\sigma_\Theta$ as a subcone of 
\[
\tau_\Theta = \prod_{v_i: \mathrm{vertex}} \sigma_{v_i} \times \RR_{\geq 0}^E. 
\] 
A point of $\tau_\Theta$ prescribes a positive length $\ell(e)$ to each $e$ of $G$, and a position $f(v)\in \sigma_v$ of each vertex $v$ of $G$. The cone $\sigma_\Theta$ is precisely the subcone where these assignments describe a tropical stable map. This is ensured by the following condition: given an edge $e$ with endpoints $v_1$ and $v_2$, we require
\[
v_2-v_1 = \ell(e)\cdot c_e,
\]
where $c_e$ is the contact order prescribed by the combinatorial $\Theta$.
Ranging over all edges, this cuts out a closed subcone $\sigma_\Theta$ of $\tau_\Theta$. This cone is easily seen to be rational and polyhedral, and inherits an integral structure from $\tau_\Theta$.

For the statement about the dimension of $\sigma_\Theta$, observe that by straighting $2$-valent vertices, each tropical stable map gives rise to a unique map $G^{\mathrm{stab}}\to \overline \Delta$, but edges may no longer map to unique cones of $\overline \Delta$. The dimension of $\sigma_\Theta$ is the same as the dimension of the cone of such maps, and we conclude the result from~\cite[Section 1]{NS06}.
\end{proof}

The faces of $\sigma_\Theta$ are cones associated to combinatorial types.

\begin{proposition}\label{prop: types}
A moduli cone $\sigma_{\Theta'}$ is a face of $\sigma_{\Theta}$ if and only if 
\begin{enumerate}[(F1)]
\item The source type $G'$ and $\Theta'$ is obtained from the source graph $G$ of $\Theta$ by a (possibly empty) collection of edge contractions $\alpha: G\to G'$.
\item Given any vertex $v'\in G'$ and a vertex $v$ such that $\alpha(v) = v'$, then the cone $\sigma_{v'}$ is a face of $\sigma_v$.
\end{enumerate}
In particular, each face of $\sigma_\Theta$ parametrizes tropical stable maps the cone associated to a combinatorial type with discrete data $\Gamma$.
\end{proposition}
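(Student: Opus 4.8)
The plan is to characterize the faces of $\sigma_\Theta$ directly from its explicit description in the preceding proposition as a subcone of $\tau_\Theta = \prod_{v} \sigma_v \times \RR_{\geq 0}^E$ cut out by the equations $v_2 - v_1 = \ell(e)\cdot c_e$. A face of $\sigma_\Theta$ is obtained by intersecting $\sigma_\Theta$ with a supporting hyperplane; since $\sigma_\Theta$ sits inside the product of the $\sigma_v$ and the edge-length coordinates, the linear functionals that can cut out faces are (up to the linear relations imposed by the balancing equations) spanned by the coordinate functionals on the $\RR_{\geq 0}^E$ factor and the facet-defining functionals of the various $\sigma_v$. Thus every face of $\sigma_\Theta$ is obtained by (i) setting some subset $S$ of the edge lengths $\ell(e)$ to zero, and (ii) restricting some of the vertex positions $f(v)$ to proper faces of the cones $\sigma_v$. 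I would establish this in two directions.

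First, the ``if'' direction. Given a collection of edge contractions $\alpha: G \to G'$ and a compatible assignment of faces $\sigma_{v'} \preceq \sigma_v$ as in (F1)--(F2), I would exhibit $\sigma_{\Theta'}$ as the face of $\sigma_\Theta$ obtained by setting $\ell(e) = 0$ for every edge $e$ contracted by $\alpha$ and forcing each vertex to lie in the prescribed face. The key point is that setting $\ell(e) = 0$ forces (via the balancing/position equation $v_2 - v_1 = \ell(e)\cdot c_e$) the two endpoints of $e$ to coincide, so the limit tropical map indeed factors through $G'$; one checks that the resulting assignment of cones to vertices of $G'$ is well-defined precisely because of (F2), and that $c_e$ is unchanged for the surviving edges. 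One must also verify the stability bookkeeping — contracting an edge can create a two-valent vertex, which is then straightened, consistent with how $G^{\mathrm{stab}}$ was defined — so that $\sigma_{\Theta'}$ really is the moduli cone of a legitimate combinatorial type with the same discrete data $\Gamma$ (the marked edges and their contact orders are untouched by internal edge contractions).

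Second, the ``only if'' direction. Starting from an arbitrary face $F \preceq \sigma_\Theta$, I would pick a point $x$ in the relative interior of $F$ and read off which edge lengths vanish at $x$ and which vertex positions have migrated to proper faces of their ambient cones $\sigma_v$. Let $S$ be the set of edges $e$ with $\ell(e) = 0$ at $x$; contracting exactly these edges yields a graph $G'$ and a contraction map $\alpha: G \to G'$, giving (F1). For each vertex $v' \in G'$, all vertices $v$ in the $\alpha$-fiber over $v'$ are connected by length-zero edges, hence map to the same point of $\overline\Delta$ at $x$, and that common point lies in the relative interior of a common face of all the cones $\sigma_v$ — call it $\sigma_{v'}$ — which gives (F2). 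Finally I would argue that $F$ equals $\sigma_{\Theta'}$: both are cut out inside the same ambient product by the same family of equations (the balancing relations for the surviving edges) together with the same face restrictions, so they coincide. The last sentence of the statement is then immediate: $\sigma_{\Theta'}$ is by construction the moduli cone of the combinatorial type $\Theta'$, and since only internal (unmarked) edges are contracted, the discrete data $\Gamma$ is preserved.

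The main obstacle I anticipate is the bookkeeping around the stability condition and the straightening of two-valent vertices: contracting an internal edge can produce two-valent vertices whose ambient cone data must be reconciled, and I will need to be careful that the ``combinatorial type'' resulting from a face is exactly one of the types already under consideration (with the correct cone labels after straightening), rather than some degenerate variant. A secondary technical point is the claim that the faces of $\sigma_\Theta$ are controlled exactly by the coordinate functionals described above; making this precise requires observing that the linear span of $\sigma_\Theta$ inside $\tau_\Theta$ is the solution space of the balancing equations, and that the facets of $\sigma_\Theta$ come from the obvious inequalities $\ell(e) \geq 0$ and the facet inequalities of the $\sigma_v$ pulled back to this subspace — no ``hidden'' facets appear, which is where rationality and the product structure are used.
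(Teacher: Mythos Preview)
Your approach is essentially the same as the paper's: the paper observes in one line that the faces of $\sigma_\Theta$ are exactly the intersections of $\sigma_\Theta$ with faces of the ambient product $\tau_\Theta = \prod_v \sigma_v \times \RR_{\geq 0}^E$, from which (F1)--(F2) follow immediately since the faces of $\tau_\Theta$ are given by setting some edge lengths to zero and restricting vertex positions to faces of the $\sigma_v$. Your worry about ``hidden facets'' is resolved by the standard polyhedral fact that when a cone is cut out of another by \emph{linear equations} (here, the relations $v_2 - v_1 = \ell(e)c_e$), its face lattice is the trace of the ambient face lattice; the paper takes this for granted, and the stability bookkeeping you flag is likewise not addressed explicitly there.
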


\begin{proof}
The faces of $\sigma_\Theta$ correspond precisely to those faces of 
\[
\tau_\Theta = \prod_{v_i: \mathrm{vertex}} \sigma_{v_i} \times \RR_{\geq 0}^E,
\]
that intersect $\sigma_\Theta$. Since the coordinates on $\tau_\Theta$ record the position of a vertex $v$ and lengths of an edge $e$, the result follows. 
\end{proof}

Denote by $\Tsm_\Gamma^\circ(\Delta)$ the cone complex formed by gluing the cones $\sigma_\Theta$ along faces, as dictated by Proposition~\ref{prop: types} above.

\begin{definition}
The canonical compactification of the cone complex $\Tsm_\Gamma^\circ(\Delta)$ is the \textbf{moduli space of tropical stable maps to $\overline \Delta$ with discrete data $\Gamma$} and is denoted $\Tsm_\Gamma(\Delta)$.
\end{definition}

\begin{remark}
The points $p\in \Tsm_\Gamma(\Delta)\setminus \Tsm_\Gamma^\circ(\Delta)$ naturally parametrize tropical stable maps from tropical nodal curves to $\overline \Delta$, but we will have no need to work directly with these objects.
\end{remark}

The extended cone complex $\Tsm_\Gamma(\Delta)$ admits natural \textit{tropical evaluation morphisms}, 
\[
ev^\trop_i: \Tsm_\Gamma(\Delta)\to \overline \Delta,
\]
sending a map $[f:C\to \overline \Delta]$ to the image of the infinite point of the marked edge $e_{p_i}$. Observe that if $e_{p_i}$ is a marked edge with trivial contact order, the expansion factor along $e_{p_i}$ is $0$, so this edge is contracted. As a result, for such an edge, $ev_i^\trop$ restricts to a map of cone complexes
\[
ev_i^\trop: \Tsm_\Gamma^\circ(\Delta)\to \Delta.
\]
These evaluation morphisms for marked edges with contact order zero are precisely the evaluations defined by Gathmann, Kerber, and Markwig~\cite[Definition 4.2]{GKM07}.

\subsection{The skeleton of $\Lsm$} Fix discrete data $\Gamma = (n,m,\beta,c)$, of $n$ marked points with contact orders given by $c$, $m$ additional marked points with trivial contact order, and the curve class $\beta$, determined by $c$. Let $\Lsm_\Gamma(X)$ denote the moduli space of minimal logarithmic stable maps to $X$ with discrete data $\Gamma$, and by $\Lsm^\circ_\Gamma(X)$ the locus where the logarithmic structure is trivial.

\begin{proposition}\label{prop: logsmooth}
The moduli space $\Lsm_\Gamma(X)$ is a logarithmically smooth Deligne--Mumford stack over $\spec(\CC)$ of dimension $\dim X-3+m+n$.
\end{proposition}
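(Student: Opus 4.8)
The plan is to verify logarithmic smoothness by an infinitesimal lifting criterion and then compute the dimension by a local model analysis at the logarithmic level. First I would recall the general machinery: by the work of Abramovich--Chen and Gross--Siebert, $\Lsm_\Gamma(X)$ is a logarithmic algebraic stack whose log structure is the minimal one, and logarithmic smoothness over $\spec(\CC)$ (with its trivial log structure) is equivalent to the formal smoothness of the moduli functor on the category of fine saturated logarithmic schemes against strict square-zero extensions. Since $X$ itself is a \emph{logarithmically smooth} variety (it is toric, with the toric boundary giving the divisorial log structure), and the source curves $C\to S$ are logarithmically smooth by definition, the deformation theory of the map $f\colon C\to X$ is governed by $f^\star T_X^{\log}$, the logarithmic tangent bundle, which on a toric variety is the trivial bundle $N\otimes\mathcal O_X$. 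The key point is that obstructions to deforming a logarithmic stable map lie in $H^1(C, f^\star T_X^{\log})$ twisted appropriately, and because $C$ has genus $0$ and $f^\star T_X^{\log}$ restricted to each component is a sum of line bundles of nonnegative degree (the map is nonconstant on components meeting the torus-invariant divisors only along prescribed contact points, and trivial bundle on contracted components), this $H^1$ vanishes. One must be slightly careful about the contracted components and the combinatorics of contact orders, but the balancing/contact-order bookkeeping ensures the relevant restrictions remain globally generated on the genus $0$ dual tree.

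The more efficient route, which I would actually carry out, is to pass to the logarithmic \'etale-local structure. By minimality, an \'etale neighborhood of a point $[f]\in\Lsm_\Gamma(X)$ carries a log structure modeled on a toric monoid $Q$, the \emph{basic monoid}, determined by the tropical combinatorial type $\Theta$ of $f$; the dual cone of $Q$ is exactly the moduli cone $\sigma_\Theta$ of tropical stable maps of type $\Theta$ constructed in the preceding propositions. Logarithmic smoothness then amounts to showing the local model is (\'etale-locally) a product of $\spec(\CC[Q])$ with a smooth scheme, i.e.\ that the map from the moduli space to the Artin fan $\mathcal A_Q = [\spec(\CC[Q])/\spec(\CC[Q^{\mathrm{gp}}])]$ is smooth. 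This is precisely the content of the Abramovich--Chen--Gross--Siebert local structure theory; I would cite it and emphasize that for toric $X$ the log structure on $X$ is globally pulled back from a toric Artin fan, which forces the map $\Lsm_\Gamma(X)\to \mathcal A_\Delta$ to be the one whose fibers are unobstructed (the obstruction group being the genus $0$ $H^1$ above). The dimension count then reads off immediately: the logarithmic tangent space at $[f]$ has dimension equal to $h^0$ of the relevant deformation complex, which by Riemann--Roch on the genus $0$ curve together with the dimension of $\overline M_{0,n+m}$ and the dimension of $X$ gives $(\dim \overline M_{0,n+m}) + \dim X = (n+m-3)+\dim X$; equivalently, it equals $\dim \sigma_\Theta$ plus the codimension-$|\Theta|$ stratum contribution, and the formula $\dim X - 3 + m + n - \mathrm{ov}(G)$ for $\dim\sigma_\Theta$ from the previous proposition recombines with the $\mathrm{ov}(G)$ interior moduli of the source curve to yield $\dim X - 3 + m + n$ uniformly across strata.

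Concretely the steps are: (1) reduce to the infinitesimal lifting criterion for log smoothness over $(\spec\CC, \mathrm{triv})$; (2) identify the obstruction group for deforming a minimal log stable map with a suitable $H^1$ on the genus $0$ source, using $T_X^{\log} \cong N\otimes\mathcal O_X$ and the splitting of the deformation complex into a curve-deformation part, a map-deformation part, and a log-structure part; (3) prove the vanishing of this $H^1$ via the genus $0$ hypothesis and nonnegativity of the relevant degrees, being careful with contracted components and the kernel/cokernel of the contact-order constraints; (4) deduce smoothness and then compute $\dim$ either by Riemann--Roch or, stratum by stratum, by combining $\dim\sigma_\Theta$ from the preceding proposition with the dimension of the moduli of source curves of type $\Theta$, checking the total is the stratum-independent $\dim X - 3 + m + n$. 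The main obstacle is step (3): the $H^1$-vanishing is clean for maps from irreducible smooth curves, but on a reducible genus $0$ source with contracted components and several contact orders, one must set up the normalization sequence correctly and verify that the log structure (equivalently, the basic monoid data) does not introduce extra obstructions — this is exactly where one leans on the Abramovich--Chen--Gross--Siebert local model and the toric hypothesis on $X$, and articulating that reduction precisely is the crux of the argument.
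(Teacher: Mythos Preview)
Your approach is essentially the paper's: obstructions to log-deforming the map lie in $H^1(C,f^\star T_X^{\log})$, the log tangent bundle of a toric variety is trivial, and genus $0$ kills the $H^1$. The dimension count is likewise the same.

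Where you diverge is in execution, and here the paper is considerably leaner. Your ``main obstacle'' in step~(3)---worrying about degrees on components, contracted pieces, normalization sequences, and whether the basic monoid introduces extra obstructions---evaporates once you use triviality fully: $T_X^{\log}\cong\mathscr O_X^{\dim X}$ implies $f^\star T_X^{\log}\cong\mathscr O_C^{\dim X}$ for \emph{any} map $f$, so $H^1(C,f^\star T_X^{\log})=H^1(C,\mathscr O_C)^{\dim X}=0$ since $C$ has arithmetic genus $0$. No component-by-component analysis is needed. The paper also organizes the argument by working \emph{relative} to the Artin stack $\mathfrak M_{0,n+m}$ of pre-stable curves: this cleanly separates the curve deformations (handled by log smoothness of $\mathfrak M_{0,n+m}$) from the map deformations (governed by the $H^1$ above), so one never has to split a deformation complex by hand. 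Your alternative route through the Artin fan and basic monoid is correct in spirit but circuitous here; it is more useful later in the paper when one actually needs the local toric model.
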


\begin{proof}
There is a natural forgetful morphism
\[
\pi: \Lsm_\Gamma(X)\to \fM_{0,n+m},
\]
to the Artin stack of pre-stable marked curves, sending a map $\xi = [f: C\to X]$ to its marked source curve. We view $\fM_{0,n+m}$ as a logarithmically smooth stack as in~\cite{Ols03}, with the natural divisorial log structure in the smooth topology. To prove that $\Lsm_\Gamma(X)$ is logarithmically smooth at $\xi$, it suffices to show that logarithmic deformations of the map $f$ keeping the marked curve $C$ fixed, are log unobstructed. By~\cite[Section 5]{GS13}, relative logarithmic obstructions lie in the group $H^1(C,f^\star T_X^{\mathrm{log}})$. The logarithmic tangent bundle of the toric variety $X$ is $\mathscr O_X^{\dim X}$, and $C$ has arithmetic genus $0$, so this group vanishes. Thus $\Lsm_\Gamma(X)$ is logarithmically smooth. 

To see that the dimension is as claimed, observe the relative dimension of $\pi$ is the dimension of the space of deformations of the map $f$, where the source curve is fixed. That is, $\mathrm{relDim}(\pi) = \dim H^0(C,f^\star T_X^{\mathrm{log}}) = \dim X$. The stack $\fM_{0,n+m}$ has dimension $n+m-3$, so the claim follows.
\end{proof}

The contact order $c$ determines a fan $\Sigma(c)$ in $N_\RR$ as follows. For each marked point $p_i$ of nontrivial contact order, the contact order $c(p_i)$ is equivalent to the data of a point $v(p_i)\in N$. Define $\Sigma(c)$ to be the one dimensional fan whose cones are the rays $\langle v(p_i)\rangle$. 

\begin{definition}
The contact order $c$ is said to be \textbf{torically transverse} if $\Sigma(c)$ is supported on the $1$-skeleton of $\Delta$.
\end{definition}

We now describe the open locus of the moduli space of maps. Related statements appear in~\cite{GM07,Gro14,Kap93,Tev07}. 

\begin{proposition}\label{prop: lsm-m0n}
Suppose $c\in \Gamma$ is a torically transverse contact order and $m\geq 1$. Then there is a natural isomorphism
\[
\Lsm_\Gamma^\circ(X)\cong M_{0,n+m}\times T.
\]
\end{proposition}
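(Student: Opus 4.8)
The plan is to exhibit the isomorphism as the ``source curve and evaluation at $q_1$'' morphism, and to prove it is an isomorphism by recognising $\Lsm_\Gamma^\circ(X)$ as a trivial $T$-torsor over $M_{0,n+m}$.

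First I would unwind what $\Lsm^\circ_\Gamma(X)$ parametrises. Over a test scheme $S$, a point with trivial relative logarithmic structure has smooth source curve — a node would contribute a smoothing parameter to the minimal base monoid — and its tropicalisation is the rigid ``star'' consisting of a single vertex at the origin of $\overline\Delta$ together with legs in the directions $c_i$ (and contracted legs at the $q_j$). Since $c$ is torically transverse these legs run along rays $\rho_i\in\Delta$, so each $p_i$ maps to the relative interior of the boundary divisor $D_{\rho_i}$, a point of the smooth locus of $X$, with tangency order $w_i$, and no other point of $C$ meets $\partial X$. Thus $\Lsm^\circ_\Gamma(X)(S)$ is the groupoid of maps $f\colon C\to X$ from a smooth $(n+m)$-marked genus $0$ curve with $f^{-1}(\partial X)$ supported on $p_1,\dots,p_n$ with the prescribed contact orders and $f(q_j)\in T$. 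As $m\ge 1$ (and $n+m\ge 3$ in the non-degenerate range), sending $[f\colon C\to X]$ to its stabilised marked source in $M_{0,n+m}$ together with $ev_1(f)=f(q_1)\in T$ defines a natural morphism $\Phi\colon \Lsm^\circ_\Gamma(X)\to M_{0,n+m}\times T$.

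Next I would put a $T$-torsor structure on $\Lsm_\Gamma^\circ(X)\to M_{0,n+m}$. The torus $T$ acts on $X$, hence on $\Lsm^\circ_\Gamma(X)$ by postcomposition, fixing the source; the action is free, since $t\cdot f=f$ forces $t$ to fix $f(q_1)\in T$ and $T$ acts freely on itself. For transitivity on fibres, fix a marked $\PP^1$: for $u\in M$ the function $f^\#(\chi^u)$ can have zeros or poles only along the $p_i$, and there only of order $\langle u,c_i\rangle$, so its divisor is forced to be $\sum_i\langle u,c_i\rangle\,p_i$, of degree $\langle u,\sum_i c_i\rangle=0$ since the boundary intersection numbers of a proper curve sum to zero; hence $f^\#(\chi^u)$ exists and is unique up to a scalar, and two maps over the same source differ by the unique $t\in T$ matching these scalars. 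Conversely, scalars chosen freely on a basis of $M$ produce a homomorphism $u\mapsto g_u$, hence a morphism $C\setminus\{p_i\}\to T$, which by completeness of $\Delta$ and torical transversality extends across each $p_i$ to a morphism $C\to X$ meeting $D_{\rho_i}$ as required; one checks this map is stable and has trivial base log structure by the criterion above. Performed over the universal curve on $M_{0,n+m}$ — using the line bundles $\mathcal O_{\mathcal C/M}(\sum_i\langle u,c_i\rangle\,[p_i])$, which are fibrewise trivial and so push forward to line bundles on the base — this shows $\Lsm^\circ_\Gamma(X)\to M_{0,n+m}$ is a Zariski-locally trivial $T$-torsor.

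Finally, a $\mathbb G_m^{\,r}$-torsor over a scheme is trivial as soon as it admits a section, and here the section is $[C]\mapsto$ the unique map over $[C]$ with $f(q_1)=1_T$, which exists and is unique by the previous paragraph; this is the one place where the hypothesis $m\ge 1$ is essential. Therefore $\Lsm^\circ_\Gamma(X)\cong M_{0,n+m}\times T$, and unwinding the trivialisation identifies this isomorphism with $\Phi$. \textbf{The main obstacle is the logarithmic input of the first paragraph}: identifying the trivial-relative-log-structure locus with the naive space of ``smooth source, meeting the torus, prescribed tangencies'' maps, and in particular checking that the reconstructed map has \emph{trivial} minimal base monoid. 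This is precisely where minimality and torical transversality enter — the latter to keep the marked points off the possibly singular deep strata and to force the tropical type to be the rigid star of the first paragraph.
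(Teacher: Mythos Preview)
Your proof is correct and arrives at the same isomorphism $\pi\times ev_1$ as the paper, but the route is genuinely different. The paper first replaces $X$ by a smooth toric resolution, then invokes Cox's homogeneous coordinate ring: a map $f\colon\PP^1\to X$ is a tuple of homogeneous polynomials $(f_\rho)_{\rho\in\Delta^{(1)}}$, and after fixing the source the roots of each $f_\rho$ are pinned down by the contact data, leaving only the leading scalars $\lambda_\rho$ free, which is exactly a $T$-worth of freedom. You instead run the argument intrinsically via characters: for each $u\in M$ the rational function $f^\sharp(\chi^u)$ has divisor forced to be $\sum_i\langle u,c_i\rangle\,p_i$ of degree zero, hence is determined up to a scalar, and you package this as the statement that $\Lsm^\circ_\Gamma(X)\to M_{0,n+m}$ is a $T$-torsor, trivialised by the section $f(q_1)=1_T$.

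The trade-offs: the Cox approach is more explicit and makes the parametrisation visible in coordinates, at the cost of the resolution step and an appeal to~\cite{Cox95}. Your torsor argument is cleaner and avoids both the resolution and the Cox machinery, since torical transversality already keeps the marked points on the rays of $\Delta$ and the character computation is insensitive to singularities of $X$ away from those rays. The one place you should be a bit more careful is the extension of $C\setminus\{p_i\}\to T$ across $p_i$ when $X$ is not smooth: this works precisely because the direction of approach is along a ray $\rho\in\Delta$, so the limit lands in the affine chart $U_\rho$; you gesture at this with ``completeness of $\Delta$ and torical transversality'' but it deserves a sentence. The logarithmic identification you flag as the main obstacle is handled in the paper with the same one-line appeal to triviality of the log structure on the open locus, so you are not missing anything the paper supplies.
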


\begin{proof}
When the target is a point there is nothing to prove, so assume that $\dim X\geq 1$.  On the interior of the moduli space $\Lsm_\Gamma(X)$ the logarithmic structure is trivial. Thus, for any logarithmic map $[f:\PP^1\to X]$, the marked points map to locally closed torus orbits of codimension at most $1$. Thus, we may replace $X$ with a toric resolution of singularities without changing $\Lsm_\Gamma^\circ(X)$. Since $X$ is now smooth, we work with the homogeneous coordinate ring of $X$ in the sense of Cox~\cite{Cox95}.

Consider a map $[f:\PP^1\to X]\in \Lsm^\circ_\Gamma(X)$ and fix homogeneous coordinates $(x\!:\!y)$ on the source $\PP^1$. Since $m\geq 1$ and $X$ is proper of dimension at least $1$, it follows that $n+m\geq 3$, and we may fix the first $3$ points at $0,1,\infty$. Consider a prime toric boundary divisor $D_\rho$ associated to a ray $\rho\in \Delta$, and label the marked points of $\PP^1$ mapping to $D_\rho$ as $f^{-1}D_\rho = \{p^{(1)}_{\rho},\ldots,p^{(k)}_\rho\}$. In homogeneous coordinates on $\PP^1$ we have $p_\rho^{(j)} = (a_\rho^j\!:\!b_\rho^j)$. By applying~\cite[Theorem~2.1]{Cox95} any map 
\[
f: \PP^1\to X
\]
with discrete data $\Gamma$ is given by a collection of homogeneous polynomials $(f_\rho)_{\rho\in \Delta^{(1)}}$. Since we have chosen coordinates for this preimage in the source $\PP^1$, we see that we can write $f_\rho$ explicitly as
\[
f_\rho = \lambda_\rho \prod_{j=1}^{k} (b_\rho^{(j)}x-a_\rho^{(j)}y)^{c(p_\rho^{(j)})}. 
\]
where $\lambda_\rho\in \CC^\times$. Thus, $f$ is fully determined by the image of the distinguished marking $p_1$ mapping to $T$. It is now straightforward that to see that we may choose the desired isomorphism to be $\pi\times ev_1$, where $\pi$ sends $[f]$ to its marked source curve. 
\end{proof}

We will need the following variation of the above proposition.

\begin{lemma}\label{lem: fixed-boundary}
Let $\Lsm_\Gamma^\circ(X)$ be as above. Let $p$ be marked point with nontrivial contact order mapping to a locally closed stratum $V(\sigma_p)$. The locus $U\subset \Lsm_\Gamma$ parametrizing maps to $X$ such that $p$ is mapped to any fixed point of $V(\sigma_p)$ is irreducible.
\end{lemma}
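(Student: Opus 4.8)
The plan is to reduce the statement to Proposition~\ref{prop: lsm-m0n} by a fibration argument. First I would observe that the locus $U$ is cut out inside $\Lsm_\Gamma^\circ(X)$ by the condition $ev_p^{-1}(z)$ for a fixed point $z\in V(\sigma_p)$, but since $p$ has nontrivial contact order, $ev_p$ is not literally one of the evaluation morphisms considered in the excerpt; rather, the marked point $p$ lies on a curve meeting the dense torus, so its position on the boundary stratum is recorded by the Cox-coordinate description. The key is to run the argument in the proof of Proposition~\ref{prop: lsm-m0n} verbatim: after passing to a toric resolution (which doesn't change $\Lsm_\Gamma^\circ(X)$), fix homogeneous coordinates $(x\!:\!y)$ on the source $\PP^1$ with the first three marked points at $0,1,\infty$, and write each $f_\rho$ as $\lambda_\rho\prod_j (b_\rho^{(j)}x - a_\rho^{(j)}y)^{c(p_\rho^{(j)})}$. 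The whole map is then determined by the configuration of marked points together with the scalars $(\lambda_\rho)$, equivalently by the source curve in $M_{0,n+m}$ together with the point $ev_1([f])\in T$.

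Next I would impose the constraint that $p$ maps to the fixed point $z$. Since $z$ lies in the stratum $V(\sigma_p)$, in Cox coordinates it is the vanishing locus of exactly those $f_\rho$ with $\rho\in\sigma_p$, and pinning down which point of $V(\sigma_p)$ is hit amounts to fixing the values of the remaining homogeneous coordinates, i.e. fixing finitely many of the scalars $\lambda_\rho$ up to the torus rescaling inherent in Cox coordinates, and additionally constraining the position of $p$ on $\PP^1$ relative to the other marked points on its component. Under the isomorphism $\Lsm_\Gamma^\circ(X)\cong M_{0,n+m}\times T$, the condition ``$p\mapsto z$'' becomes: the $T$-coordinate is constrained to lie in a fixed coset of the subtorus $\mathbb G(\sigma_p)$ stabilizing $V(\sigma_p)$ (equivalently a single $T$-torsor over a point), and the $M_{0,n+m}$-coordinate is unconstrained. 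More precisely the locus $U$ is the preimage of a point under the composite $\Lsm_\Gamma^\circ(X)\cong M_{0,n+m}\times T \to T \to T/\mathbb G(\sigma_p)$, which exhibits $U$ as $M_{0,n+m}\times \mathbb G(\sigma_p)$ (a torsor over $M_{0,n+m}$ under $\mathbb G(\sigma_p)$, trivialized by $ev_1$). Both $M_{0,n+m}$ and the torus $\mathbb G(\sigma_p)$ are irreducible, hence so is the product, and $U$ is irreducible.

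I expect the main obstacle to be bookkeeping the precise relationship between ``fixing the image point of $p$ in $V(\sigma_p)$'' and the resulting constraint on the Cox scalars and on the position of $p$ on the source, especially when the component of $C$ carrying $p$ is itself allowed to vary or is contracted to a boundary stratum. One must be careful that $U$ is a \emph{locally closed} substack on which the logarithmic structure is still trivial, so that Proposition~\ref{prop: lsm-m0n} applies; if $p$ is the unique marked point on its component then moving $p$ on $\PP^1$ doesn't change the map (since the map is $T$-equivariantly determined by the boundary preimages and one torus point), and the constraint collapses cleanly onto the $T$-factor. I would handle the general case by noting that any reparametrization moving $p$ to a standard location is absorbed into the $\PGL_2$-action already quotiented out in passing to $M_{0,n+m}$, so no extra complication arises. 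The only genuinely new ingredient beyond Proposition~\ref{prop: lsm-m0n} is the identification of the fiber as a $\mathbb G(\sigma_p)$-torsor over $M_{0,n+m}$, which follows since the residual torus acting freely and transitively on the possible images of $p$ inside its ambient $T$-orbit is precisely $\mathbb G(\sigma_p)$, and $\mathbb G(\sigma_p)$ is a torus hence connected and irreducible.
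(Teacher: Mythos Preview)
Your approach matches the paper's: both rely on the explicit Cox-coordinate description from Proposition~\ref{prop: lsm-m0n}. The paper's own proof is a two-line sketch saying only that ``using the explicit description of $\Lsm^\circ_\Gamma(X)$ above, it is straightforward to check that this locus is irreducible,'' so in spirit you are doing exactly what is intended, with considerably more detail supplied.

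There is one inaccuracy worth flagging. You assert that $U$ is the preimage of a point under the composite $M_{0,n+m}\times T\to T\to T/\mathbb G(\sigma_p)$, i.e.\ that the evaluation $[f]\mapsto f(p)$ factors through the projection to $T$. This is false in general: the point $f(p)\in V(\sigma_p)$ depends on the configuration of marked points as well as on $ev_1(f)$. For instance, for lines in $\PP^2$ with one interior marking $p_1$ at position $c$ and contact markings $q_0,q_1,q_2$ at $0,\infty,1$, one has $f_0=\lambda_0 x$, $f_1=\lambda_1 y$, $f_2=\lambda_2(x-y)$, hence $f(q_0)=[0:\lambda_1:-\lambda_2]$ while $ev_1(f)=[\lambda_0 c:\lambda_1:\lambda_2(c-1)]$; the ratio determining $f(q_0)$ inside $V(\sigma_0)$ therefore involves the cross-ratio parameter $c$, not only the $T$-coordinate. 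What \emph{is} correct, and what you also gesture at, is the weaker statement that for each fixed $[C]\in M_{0,n+m}$ the fiber of $U$ over $[C]$ is a single $\mathbb G(\sigma_p)$-coset in $T$: post-composing with the $T$-action sweeps $f(p)$ over all of $V(\sigma_p)$ with stabilizer $\mathbb G(\sigma_p)$. This still exhibits $U\to M_{0,n+m}$ as a $\mathbb G(\sigma_p)$-torsor, and a torsor under a connected group over an irreducible base is irreducible. So the error is in your claimed trivialization via $ev_1$, not in the torsor structure itself, and your conclusion stands.
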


\begin{proof}
The locus $U$ can be parametrized by those maps $f$ where $f(p)$ is fixed. Using the explicit description of $\Lsm^\circ_\Gamma(X)$ above, it is straightforward to check that this locus is irreducible.
\end{proof}

The proposition above suggests that the most naive compactification of the space $\Lsm^\circ_\Gamma(X)$ is simply $\overline M_{0,n+m}\times X$. However, this space does not admit natural evaluation morphisms to $X$ and does not remember data about the contact order. Thus, it contains insufficient information for Gromov--Witten theory. Our proof of Theorem~\ref{thm: trop-comp} will show that the logarithmic stable map compactification differs from this naive compactification only by an explicit toroidal modification. When $X$ is Calabi-Yau (as opposed to log Calabi-Yau as in our setting), the space $\overline M_{0,n}\times X$ as appeared in earlier work of Morrison and Plesser in mirror symmetry~\cite{MP95}.

\begin{proposition}\label{prop: irred}
The moduli space $\Lsm_\Gamma(X)$ is irreducible.
\end{proposition}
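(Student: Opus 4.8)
The plan is to leverage the logarithmic smoothness of $\Lsm_\Gamma(X)$ established in Proposition~\ref{prop: logsmooth}. A Deligne--Mumford stack that is logarithmically smooth over $\Spec\CC$ (with its trivial structure) is, étale locally, a toroidal embedding; in particular its underlying stack is normal, and the locus where the log structure is nontrivial is a divisor, so the interior $\Lsm^\circ_\Gamma(X)$ is dense and meets every irreducible component. A normal stack is irreducible as soon as it is connected, so the whole problem reduces to showing that $\Lsm^\circ_\Gamma(X)$ is connected, and in fact I will show it is irreducible.

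The first thing I would record is that every point of $\Lsm^\circ_\Gamma(X)$ is a map $f\colon\PP^1\to X$ whose image meets the dense torus $T$: over $\Spec\CC$, trivial base log structure forces the source to be smooth (a node would contribute a nonzero generator to the base monoid) and forces no component of the source to land in the toric boundary (such a component would contribute a nonzero tangency parameter to the base monoid). With this in hand I would carry out two reductions. If $m=0$, pass to the discrete data $\Gamma^{+}$ obtained by adding one marked point with trivial contact order; the forgetful morphism $\Lsm^\circ_{\Gamma^{+}}(X)\to\Lsm^\circ_\Gamma(X)$ is surjective, since any $[f\colon\PP^1\to X]$ in the target acquires a marking at a general point of $\PP^1$ lying over $T$, and no stabilization is needed because $f$ is non-constant. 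As the image of an irreducible space under a morphism is irreducible, this reduces us to $m\geq 1$. If the contact order $c$ is not torically transverse, choose a smooth projective toric variety $X'$ with fan $\Delta'$ refining $\Delta$ and containing every ray $\RR_{\geq 0}\,v(p_i)$ determined by $c$; then $c$ is torically transverse for $X'$. The identity on tori extends to a proper birational $\pi\colon X'\to X$, and postcomposition with $\pi$ defines a morphism $\Lsm^\circ_{\Gamma'}(X')\to\Lsm^\circ_\Gamma(X)$, where $\Gamma'$ carries the same markings and the curve class forced by $c$ viewed in $\Delta'$. This morphism is surjective: a map $\PP^1\to X$ meeting $T$ lifts uniquely along $\pi$ by properness, and the modification does not change how the curve approaches the torus, so the lift realizes exactly the contact orders prescribed by $\Gamma'$. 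It therefore suffices to prove $\Lsm^\circ_{\Gamma'}(X')$ is irreducible; but now $c$ is torically transverse and $m\geq 1$, so Proposition~\ref{prop: lsm-m0n} identifies this space with $M_{0,n+m}\times T$, which is irreducible. Tracing the reductions back, $\Lsm^\circ_\Gamma(X)$ is irreducible, and since it is dense in the normal stack $\Lsm_\Gamma(X)$ we conclude that $\Lsm_\Gamma(X)=\overline{\Lsm^\circ_\Gamma(X)}$ is irreducible.

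The delicate step is the comparison between $\Lsm^\circ_\Gamma(X)$ and $\Lsm^\circ_{\Gamma'}(X')$: one must be confident that the toric modification changes neither the source curves (both interiors consist of maps from $\PP^1$ to the common torus $T$) nor the discrete data (the contact order at $p_i$ is intrinsic to the rational map $f|_T\colon\PP^1\to T$ together with the point $p_i$, hence insensitive to the refinement of $\Delta$), and that a smooth projective $\Delta'$ resolving the rays of $c$ exists. Underlying everything is the claim that a log map over a point with trivial base monoid has smooth source meeting the torus, which I would verify from the explicit shape of the minimal log structure recalled in Section~\ref{sec: background}. When $n\geq 1$ there is a slicker route --- torus-equivariance of $ev_p\colon\Lsm_\Gamma(X)\to V(\sigma_p)$ onto a single orbit reduces irreducibility of the whole space to that of a single fiber, as in Lemma~\ref{lem: fixed-boundary} --- but since this misses the case $n=0$, I would keep the interior-based argument as the primary one.
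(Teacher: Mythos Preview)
Your proof is correct and follows essentially the same strategy as the paper: reduce to the interior via logarithmic smoothness, add a marked point to ensure $m\geq 1$, refine the fan to make the contact order torically transverse, and invoke Proposition~\ref{prop: lsm-m0n}. The only cosmetic difference is in the refinement step---the paper cites \cite[Theorem B.6]{AMW12} and \cite[Proposition 5.3.1]{AW} to see that $\Lsm_\Gamma(X_c)\to\Lsm_\Gamma(X)$ is a toroidal modification and hence the identity on interiors, whereas you argue the surjectivity of $\Lsm^\circ_{\Gamma'}(X')\to\Lsm^\circ_\Gamma(X)$ directly by lifting along the proper birational $\pi$; both are valid and yield the same conclusion.
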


\begin{proof}
Since $\Lsm_\Gamma(X)$ is logarithmically smooth, it has a dense open set where the logarithmic structure is trivial, so it suffices to prove that $\Lsm_\Gamma^\circ(X)$ is irreducible. Observe that we may modify $\Gamma$ to a new discrete datum $\widetilde \Gamma$, by adding a marked point $p_1$ with trivial contact order. There is a surjective morphism 
\[
\Lsm_{\widetilde \Gamma}^\circ(X)\to \Lsm^\circ_\Gamma(X),
\] 
forgetting the marked point $p_1$ so the irreducibility of $\Lsm_{\widetilde \Gamma}^\circ(X)$ will imply the irreducibility of $\Lsm^\circ_\Gamma(X)$. Thus, we may and do assume that $\Gamma$ has at least one marked point $p_1$ with trivial contact. 

We now reduce to proving the proposition when the contact order is transverse to the toric boundary. The contact order $c$ determines a fan $\Sigma_c$ in $N_\RR$ as follows. For each marked point $v$, the contact order $c$ is equivalent to a point $v_p\in N$. Ranging over the marked points $p$ with nontrivial contact order, define $\Sigma_c$ to be the one-dimensional fan whose rays are $\langle v_p\rangle$. Choose a fan $ \Delta_c$ refining $\Delta$ such that $\Sigma_c$ is a subfan of $\Delta_c$, and let $X_c$ be the associated toric variety. The map $X_c\to X$ is a toric modification and by~\cite[Theorem B.6]{AMW12} there is an associated map $\Lsm_\Gamma(X_c)\to \Lsm_\Gamma(X)$. By applying~\cite[Proposition 5.3.1]{AW}, we conclude that 
\[
\Lsm_\Gamma(X_c)\to \Lsm_\Gamma(X) 
\]
is a toroidal modification, and in particular restricts to the identity on the locus $\Lsm^\circ_\Gamma(X_c)$. The result now follows from irreducibility of $M_{0,n}$ and Proposition~\ref{prop: lsm-m0n}.
\end{proof}

\begin{remark}
The irreducibility of the space of logarithmic maps to log homogeneous varieties has been a topic of recent interest, see~\cite[Proposition 2.4]{CS12} and~\cite{CZ14}.
\end{remark}

\begin{proposition}
There is a continuous idempotent self-map 
\[
\bm p: \Lsm^{\an}_\Gamma(X)\to \Lsm^{\an}_\Gamma(X),
\] 
giving a deformation retraction of $\Lsm^{\an}_\Gamma(X)$ onto a connected extended cone complex $\overline \fS$. 
\end{proposition}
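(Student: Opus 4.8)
The plan is to recognize $\Lsm_\Gamma(X)$ as a \emph{proper toroidal Deligne--Mumford stack} over the trivially valued field $\CC$, and then to invoke the generalized Thuillier retraction of Abramovich--Caporaso--Payne~\cite{ACP}, building on~\cite{Thu07}. First, by Proposition~\ref{prop: logsmooth}, $\Lsm_\Gamma(X)$ is logarithmically smooth over $\spec(\CC)$ (with its trivial logarithmic structure). Logarithmic smoothness over the trivially-logarithmic point forces the underlying stack to be normal and its logarithmic structure to be that of a toroidal Deligne--Mumford stack: \'etale-locally, $\Lsm_\Gamma(X)$ is of the form $[V_\sigma/G]$ with $V_\sigma$ an affine toric variety, $G$ finite, and the boundary equal to the toric boundary. (The components of the boundary divisor need not be normal, so one allows ``self-intersections''; this is harmless here, since the combinatorial shadow will simply be a \emph{generalized} extended cone complex.) In particular $\Lsm_\Gamma(X)$ is toroidal in the sense of~\cite[Section 6]{ACP}, with an associated (generalized) extended cone complex $\overline\fS := \overline\fS(\Lsm_\Gamma(X))$.

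Next I would observe that, since $\Lsm_\Gamma(X)$ is proper (by Abramovich--Chen and Gross--Siebert), the valuative criterion of properness shows that every point of $\Lsm^{\an}_\Gamma(X)$, represented by a morphism $\spec(L)\to \Lsm_\Gamma(X)$ from a valued extension field, extends over $\spec(R_L)$; hence $\Lsm^{\an}_\Gamma(X) = \Lsm^{\beth}_\Gamma(X)$, and by~\cite[Proposition 5.7]{U14b} this underlying topological space agrees with that of the coarse space. With these identifications in hand, the extension of Thuillier's construction in~\cite[Section 6]{ACP} applies verbatim: it produces a strong deformation retraction $\bm p : \Lsm^{\beth}_\Gamma(X)\to \overline\fS$ onto the skeleton. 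In particular $\bm p$ is a continuous idempotent self-map of $\Lsm^{\an}_\Gamma(X)$ whose image is the extended cone complex $\overline\fS$, which is exactly the assertion of the proposition.

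Connectedness of $\overline\fS$ is then immediate: by Proposition~\ref{prop: irred}, $\Lsm_\Gamma(X)$ is irreducible, hence connected; therefore $\Lsm^{\an}_\Gamma(X)$ is path connected (Berkovich); and a space admitting a deformation retraction onto $\overline\fS$ is connected precisely when $\overline\fS$ is, so $\overline\fS$ is connected.

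The step I expect to require the most care is the very first one: verifying that the abstract \emph{minimal} (``basic'') logarithmic structure carried by $\Lsm_\Gamma(X)$ from the moduli problem is genuinely the toroidal structure to which~\cite{ACP} applies --- i.e.\ that logarithmic smoothness over $\spec(\CC)$ really does yield a log-regular Deligne--Mumford stack together with its boundary divisor (possibly with self-intersections), so that the ACP machinery and the resulting generalized extended cone complex $\overline\fS$ are legitimately available. Once this structural identification is pinned down, the existence of $\bm p$, its idempotence, and the retraction property are all formal consequences of the cited results, and connectedness follows from irreducibility as above.
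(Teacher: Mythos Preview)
Your proposal is correct and follows essentially the same route as the paper: log smoothness (Proposition~\ref{prop: logsmooth}) implies the stack is toroidal, whence the ACP/Thuillier retraction applies, and connectedness of the skeleton follows from irreducibility (Proposition~\ref{prop: irred}). The paper dispatches the step you flag as delicate by citing Kato's criterion~\cite[Theorem 3.5]{Kat89} directly, and deduces connectedness of $\Lsm_\Gamma^{\an}(X)$ via non-archimedean GAGA rather than Berkovich's path-connectedness theorem, but these are cosmetic differences.
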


\begin{proof}
By~\cite[Theorem 3.5]{Kat89}, the logarithmic smoothness of $\Lsm_\Gamma(X)$ over $\spec(\CC)$ is equivalent to $\Lsm_\Gamma(X)$ being toroidal. We apply Thuillier's deformation retraction  toroidal Deligne--Mumford stacks by Abramovich, Caporaso, and Payne in~\cite[Section 6]{ACP}. Since the projection $\Lsm^{\an}_\Gamma(X)\to \overline \fS$ is a homotopy equivalence, the connectivity of $\overline \fS$ is equivalent to the connectivity $\Lsm^{\an}_\Gamma(X)$. By non-archimedean GAGA~\cite[Theorem 3.5.3(iii)]{Ber90} this is equivalent to the connectivity of $\Lsm_\Gamma(X)$ which follows immediately from Proposition~\ref{prop: irred} above. 
\end{proof}

\subsection{Pointwise tropicalization}\label{sec: pointwise-trop} A point of $\Lsm^{\circ,\an}_\Gamma(X)$ is represented by a map
\[
\spec(K)\to \Lsm^\circ_\Gamma(X).
\]
We have a chosen compactification $\Lsm_\Gamma(X)$ of $\Lsm^\circ_\Gamma(X)$, so after replacing $K$ and $R$ by a finite ramified extension we obtain a family of logarithmic stable maps over a valuation ring $R$. Such a family produces a tropical stable map to $\overline \Delta$ as follows. Pull back the universal curve, map, and minimal logarithmic structure to obtain a diagram
\[
\begin{tikzcd}
C\arrow{d}\arrow{r} & \mathscr C \arrow{d}\arrow{r}{f} & X \\
\spec(K)\arrow{r} & \spec(R) & 
\end{tikzcd}
\]
where $\mathscr C$ is a semistable model with generic fiber $C$, and $f$ is a minimal logarithmic stable map. Let $G_{\mathscr C}$ denote the marked dual graph of the special fiber of $\mathscr C$, metrized as follows. Let $e$ be an edge associated to a node $q$. Define the length of $e$ by $\ell(e) = \nu(\pi)$, where $\pi\in R$ is a deformation parameter for the node $q$. It is straightforward to check that this assignment $e\mapsto \ell(e)$ is independent of all choices, see~\cite[Lemma 2.2.4]{Viv12} for a proof. This gives rise to an abstract tropical curve $G_{\mathscr C}$. The metric space $G_\mathscr C$ is naturally a subspace of the analytic space $C^{\an}$.

\begin{proposition}
There is a continuous map $\trop: C^{\an}\to G_{\mathscr C}$, together with a section $s: G_{\mathscr C}\to C^{\an}$, such that the pair of maps
\[
\begin{tikzcd}
C^{\an}\arrow[bend left]{r}{\trop} & G_{\mathscr C} \arrow[bend left]{l}{s} 
\end{tikzcd}
\] 
realize $G_{\mathscr C}$ as a strong deformation retract of $C^{\an}$.
\end{proposition}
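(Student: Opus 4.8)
The plan is to use the standard structure theory of Berkovich curves: the semistable model $\mathscr C$ cuts $C^{\an}$ into a finite graph, a collection of open discs, and a collection of open annuli, and the retraction is obtained by gluing the evident local retractions of discs and annuli. I would freely invoke the results of~\cite[Sections 4--5]{BPR} (resting ultimately on~\cite{Ber90,Ber99}) for the non-archimedean input. Concretely, the first step is to consider the reduction map $\mathrm{red}\colon C^{\an}\to \mathscr C_0$ attached to $\mathscr C$, where $\mathscr C_0$ is the special fibre. Since the residue field of $R$ is algebraically closed and $\mathscr C\to\spec R$ is semistable, $\mathrm{red}$ is surjective and its fibres come in three types: over the generic point $\eta_v$ of an irreducible component $Z_v\subset \mathscr C_0$ the fibre is a single divisorial (type-$2$) point $\zeta_v\in C^{\an}$; over a smooth closed point of $\mathscr C_0$ it is an open ball; and over a node $q_e$ of $\mathscr C_0$ it is an open annulus $\mathbf A_e$. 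The points $\zeta_v$ become the vertices of the skeleton and the annuli $\mathbf A_e$ its edges.

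The second step is to make the annuli and the section explicit. A node $q_e$ has a formal-local model $\mathrm{Spf}\, R\llbracket x,y\rrbracket/(xy-\pi)$ with $\nu(\pi)=\ell(e)$, and $\mathrm{red}^{-1}(q_e)$ is canonically the open annulus $\{\, |\pi|<|t|<1 \,\}$, whose skeleton is an interval of length $\ell(e)$ with open ends limiting to the points $\zeta_v$ for the two branches through $q_e$. Gluing these intervals to the $\zeta_v$ along the incidences recorded in the dual graph $G_{\mathscr C}$ realises $G_{\mathscr C}$ as a closed subset of $C^{\an}$ and as a homeomorphism onto its image; this is the section $s$. I would then define $\trop$ to be the identity on $s(G_{\mathscr C})$, the constant map with value $\zeta_v$ on each residue disc over a smooth point of $Z_v$, and the canonical retraction onto the skeleton on each annulus $\mathbf A_e$. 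These prescriptions agree on overlaps because the unique boundary point of a residue disc lying on $Z_v$ is $\zeta_v$, and the two ends of the skeleton of $\mathbf A_e$ are the corresponding points $\zeta_v$; in particular $\trop\circ s=\id_{G_{\mathscr C}}$.

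The third step is to promote $s\circ\trop$ to a strong deformation retraction. I would glue the standard local homotopies: the radial contraction of each open ball onto its boundary point and the standard contraction of each open annulus onto its skeleton. Each of these is the identity at time $0$, equals $s\circ\trop$ at time $1$, fixes the skeleton pointwise at all times, and the two families agree on the locus of the divisorial points $\zeta_v$, so they assemble into a continuous map $H\colon C^{\an}\times[0,1]\to C^{\an}$ with $H_0=\id$, $H_1=s\circ\trop$, and $H_t|_{G_{\mathscr C}}=\id$ for all $t$.

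The step I expect to be the main obstacle is the continuity of $\trop$ and of $H$ across the divisorial points $\zeta_v$. This requires knowing that the partition of $C^{\an}$ into $\{\zeta_v\}$, the residue discs, and the residue annuli is locally finite and topologically well-behaved near each $\zeta_v$ --- equivalently, that the reduction map is anticontinuous and the dual graph is finite, so that a basis of neighbourhoods of $\zeta_v$ is obtained from finitely many residue discs and annular germs emanating from it. This is precisely the content of the semistable reduction and skeleton formalism for Berkovich curves, and rather than reprove it I would cite~\cite[Section 5]{BPR} (see also~\cite{Ber90,Ber99,Thu07}); everything else in the argument is formal bookkeeping with the dual graph.
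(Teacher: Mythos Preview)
Your proposal is correct and follows exactly the approach the paper has in mind: the paper's own ``proof'' is simply a citation to Berkovich~\cite[Chapter~4]{Ber90} and to the semistable-vertex-set treatment in~\cite[Section~5]{BPR}, with no further argument. Your sketch is precisely an unpacking of the latter reference, so there is nothing to correct and no substantive difference in route to report.
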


\begin{proof}
This is proved by Berkovich in~\cite[Chapter 4]{Ber90}. For a proof using the theory of semistable vertex sets, see~\cite[Section 5]{BPR}.
\end{proof}

By functoriality of analytification, there is a map $f^{\an}:C^{\an}\to X^{\an}$, so we obtain a continuous map $f_\trop$ defined as the composite
\[ 
\begin{tikzcd}
G_{\mathscr C} \arrow[swap]{r}{s} \arrow[bend left]{rrr}{f_\trop} & C^{\an} \arrow[swap]{r}{f^{\an}} & X^{\an} \arrow[swap]{r}{\trop} & \overline \Delta.
\end{tikzcd}
\]
We now come to the main result of this subsection, that $f_{\trop}$ is a tropical stable map. 

\begin{theorem}\label{thm: pointwise-trop}
Let $f:\mathscr C\to X$ be a family of logarithmic stable maps with discrete data $\Gamma$ over $\spec(R)$. Assume that the moduli map sends $\spec(K)$ to $\Lsm^\circ_\Gamma(X)$. Then the map $f_{\trop}$ associated to $\mathscr C\to X$ is a tropical stable map from a smooth tropical curve, with discrete data $\Gamma$. 
\end{theorem}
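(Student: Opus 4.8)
The plan is to verify the conditions (1)--(5) of Definition~\ref{def: trop-stable-map} for $f_{\trop}\colon G_{\mathscr C}\to\overline\Delta$ in turn, and then to read off its discrete data.

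\emph{Source curve and conditions (1)--(3).} Since the moduli map sends $\spec(K)$ into $\Lsm_\Gamma^\circ(X)$, the generic fiber $C$ has trivial logarithmic structure, hence is a smooth rational curve meeting the dense torus; so the special fiber of $\mathscr C$ is nodal of arithmetic genus $0$, the dual graph $G_{\mathscr C}$ is a tree, every bounded edge has finite length $\nu(\pi_e)>0$, and the $n+m$ marked leaves have length $\infty$. Thus $G_{\mathscr C}$ is a smooth genus-$0$ marked tropical curve, and $f_{\trop}$ is continuous and proper, being a composite of continuous proper maps. Working in an \'etale-local toric chart around the image of a node of the special fiber, with the node cut out by $xy=\pi_e$, the interior of the corresponding edge is the family of monomial valuations $v_t$ determined by $v_t(x)=t$; for $m\in S_\sigma$ the assignment $t\mapsto v_t(f^{\star}z^m)$ is the restriction of a concave, integral, piecewise-linear function, not identically $\infty$ because $C$ is not contained in a boundary divisor. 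Refining $G_{\mathscr C}$ by inserting a vertex at each of the finitely many breakpoints (the minimal subdivision of the Remark after Definition~\ref{def: trop-stable-map}, which alters neither the metric graph nor its genus), we get that $f_{\trop}$ carries each edge linearly into a single extended cone, with integral slope $w_e$ in a primitive direction $u_e\in N$; this gives (2) and (3). For (1), a bounded point of $G_{\mathscr C}$ is a type-$2$ point of $C^{\an}$, at which every nonzero element of the function field of $C$ has finite value, so $f_{\trop}$ maps the bounded part of $G_{\mathscr C}$ into $N_\RR$; along a marked leaf the expansion factor is $0$ when the contact order is trivial (and the leaf is contracted), and otherwise $f_{\trop}$ leaves $N_\RR$ exactly at the infinite point, since $f$ sends that marked point into a boundary stratum. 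Hence $f_{\trop}^{-1}(\overline\Delta\setminus\Delta)$ is precisely the set of infinite points of the markings of nontrivial contact order.

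\emph{Condition (4).} Let $v$ be a vertex of $G_{\mathscr C}$, with component $C_v\cong\PP^1$. By the previous paragraph $f_{\trop}(v)\in N_\RR$, so the generic point of $C_v$ maps into the dense torus, and therefore $f^{\star}z^m|_{C_v}$ is a well-defined nonzero rational function on $\PP^1$ for every $m\in M$. Its zeros and poles occur only at the special points of $C_v$, since the logarithmic structure forces $f^{-1}(\partial X)$ to be supported on nodes and markings; hence it has degree $0$. Translating the order of vanishing at each special point into the outgoing slope $w_e\langle m,u_e\rangle$ --- the slope formula for $\PP^1$, cf.~\cite[Section 5]{BPR}, together with the identification of the contact order at a marking $p_i\in C_v$ with $\langle m,c(p_i)\rangle$ --- we obtain $\sum_{e\ni v}w_e\langle m,u_e\rangle=0$ for all $m$; equivalently $\sum_{e\ni v}w_e u_e=0$. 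This is the balancing of the weighted complex $f_{\trop}(G_{\mathscr C})\cap N_\RR$.

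\emph{Discrete data and stability.} The contact order $w_{e_{p_i}}u_{e_{p_i}}$ recorded by $f_{\trop}$ at a marking is, by the description of the minimal logarithmic structure at the section $s_i$ in Section~\ref{sec: background}, the element $c(p_i)\in N$ prescribed by $\Gamma$, so $f_{\trop}$ has discrete data $\Gamma$. For the stability condition, let $v$ be a divalent vertex of $f_{\trop}(G_{\mathscr C})$ and $w$ a preimage vertex of valence $\leq 2$, so that $C_w$ has at most two special points. If $C_w$ is $f$-contracted, this contradicts stability of the underlying ordinary stable map $\underline f$. Otherwise $f(C_w)$ is a complete curve, which cannot lie in the affine torus, so it meets $\partial X$; and by balancing of its curve class it meets at least two distinct prime boundary divisors, with opposite contact directions. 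By the logarithmic constraint the corresponding points are special points of $C_w$, hence are exactly its two special points, so the two edges at $w$ map to distinct extended cones of $\overline\Delta$ --- that is, $v$ is a cone crossing, which is case (2). Running this dichotomy in both directions also yields the equivalence ``stable special fiber $\iff$ stable tropicalization'' promised after Definition~\ref{def: trop-stable-map}.

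\emph{Main obstacle.} The delicate step is stability: one must combine the logarithmic constraint $f^{-1}(\partial X)\subseteq\{\mathrm{nodes}\}\cup\{\mathrm{markings}\}$, ordinary stability of $\underline f$, and balancing on two-valent components to force every divalent image vertex to be a genuine cone crossing, and then promote this to an equivalence. The balancing in (4) is also somewhat technical, relying on the slope formula and on sign bookkeeping at nodes versus markings; by contrast (1)--(3) are comparatively routine given the local toric charts and the type-$2$ structure of the skeleton.
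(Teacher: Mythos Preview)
Your overall strategy---verifying the axioms of Definition~\ref{def: trop-stable-map} one at a time---is exactly the paper's, and most of the steps line up. There is, however, a genuine gap.

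\textbf{The gap.} In your argument for balancing you write ``$f_{\trop}(v)\in N_\RR$, so the generic point of $C_v$ maps into the dense torus''. This inference is false. The statement $f_{\trop}(v)\in N_\RR$ only says that $\val_v(f^\star z^m)$ is finite for all $m\in M$, i.e.\ that $f^\star z^m$ is nonzero in the function field of the \emph{generic} fiber $C$; it says nothing about the restriction to $C_v$. If $f_{\trop}(v)$ lies in the relative interior of a nontrivial cone $\sigma$, then the generic point of $C_v$ maps to $V(\sigma)$, and $f^\star z^m|_{C_v}$ is identically zero for any $m$ in the interior of $\sigma^\vee$. So the function whose zeros and poles you are summing is not well-defined on $C_v$. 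The same error resurfaces in your stability argument when you say $f(C_w)$ ``cannot lie in the affine torus, so it meets $\partial X$'': you should be working in $\overline{V(\sigma_w)}$ and its boundary, not in $X$ and $\partial X$. The conclusions you want are still true: balancing follows from the slope formula applied on the skeleton (the zeros and poles of $f^\star z^m$ on $C$ are only at marked points, which retract to infinite points, so the sum of outgoing slopes at a finite vertex vanishes), or more simply by citing the structure theorem for tropicalizations as the paper does. For stability the paper argues the contrapositive: if $v$ is \emph{not} a cone crossing, then a neighbourhood of $f_{\trop}(v)$ lies in a single cone $\sigma$, so $C_v$ maps entirely into the locally closed torus $V(\sigma)$ and must therefore be contracted, contradicting stability of $\underline f$.

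\textbf{A secondary point.} You insert vertices at ``breakpoints'' to force condition~(2), invoking the remark after Definition~\ref{def: trop-stable-map}. The paper instead proves that no refinement is needed: for a bounded edge $e$ with node $q_e$ and endpoints $v_1,v_2$, the image $f(q_e)$ lies in the closure of both $V(\sigma_{v_1})$ and $V(\sigma_{v_2})$, hence in some $V(\sigma_e)$ with $\sigma_{v_1},\sigma_{v_2}\preceq\sigma_e$, and the (linear) image of $e$ therefore lies in the convex cone $\sigma_e$. In fact the monomial-valuation computation already shows $t\mapsto v_t(f^\star z^m)$ is \emph{linear}, not merely piecewise linear, so your refinement is vacuous---but you have not argued this, and the theorem is about $f_{\trop}$ with source $G_{\mathscr C}$, not a subdivision of it.
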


\begin{proof} We check the conditions of Definition~\ref{def: trop-stable-map}.
\
\medskip

\noindent
{\bf The source graph}.  By hypothesis, the generic point of $\spec(R)$ maps to the locus in $\Lsm_\Gamma(X)$ where the logarithmic structure is trivial so the generic fiber $C$ of $\mathscr C$ must be smooth. The length of an internal edge $e$ is given by the valuation of the smoothing parameter $\pi_e$ at the corresponding node. If $\nu(\pi_e) = \infty$, then $\pi_e = 0$, which contradicts the fact that $C$ is smooth. Thus, the lengths of all internal edges are finite. Since $\mathscr C$ is a flat family of curves, the arithmetic genus of the special fiber is $0$, so the dual graph must be a tree. We conclude that $G_{\mathscr C}$ is a smooth $(n+m)$-marked rational tropical curve. \\

\noindent
{\bf The infinite points}. As above, since the generic point of $\spec(R)$ maps to the locus $\Lsm_\Gamma^\circ(X)$, the logarithmic structure on $C$ is trivial away from the markings. This implies that $f$ must map $C$ to the dense torus $T\subset X$, except possibly at the marked points. Thus $C^{\an}$ minus finitely many points of $C(K)\subset C^{\an}$ is mapped to $T^{\an}$ under $f^{\an}$. We conclude that  $f_{\trop}^{-1}(\overline \Delta\setminus \Delta)$ consists of infinite points of $G_{\mathscr C}$. \\

\noindent
{\bf The edges of $G_{\mathscr C}$.} Let $p$ be a marked section of $\mathscr C$. The skeleton $G_{\mathscr C}$ contains a marked infinite edge $e_p$ corresponding to $p$. Assume that the reduction $\overline p$ of $p$ to the special fiber of $\mathscr C$ maps to the stratum $V(\sigma)$ in $X$. To compute the image of $e_p$ in $\overline \Delta$, observe that the contact order gives rise to a homomorphism
\[
c_p: \overline{\mathpzc M}_{X,f(\overline p)}\to \NN.
\]
Here the target monoid $\NN$ is the stalk of the relative characteristic of the source family $\mathscr C\to \spec(R)$. Dualizing, $c_p$ is a point of $\sigma$ and may write $c_p = w_pu_p$, where $u_p$ is primitive and $w_p\in \NN$.  We claim that the edge $e_p$ is mapped onto its image with expansion factor $w_p$. Let $U$ be an \'etale local chart for $\mathscr C$ near $\overline p$, where the section $p$ is cut out by a parameter $x_p$. The edge $e_p\subset G_{\mathscr C}\subset C^{\an}$ can then be parametrized as the set of monomial valuations
\begin{eqnarray*}
\val_r: R[U]&\to& \RR\sqcup \{\infty\} \\
\sum a_i x_p^i &\mapsto& \min_i\{ \nu(a_i)+r\cdot i\},
\end{eqnarray*}
for $r\in [0,\infty]$. Note that the valuation is independent of the choice of chart by~\cite[Lemma 2.2.4]{Viv12}. The data of the contact order $c_p$ is equivalent to the statement that the monomial $x_p$ is the pullback of the character $\chi^{c_p}\in M$. Composing with the projection $X^{\an}\to \overline \Delta$ amounts to taking valuations, so we see that $e_p$ maps to the real ray generated by $u_p$, with expansion factor equal $w_p$.  

Consider an edge $e_q\in G_{\mathscr C}$ of length $\ell_q$, corresponding to a node $q$ of the special fiber of $\mathscr C$. Let $V$ be an \'etale neighborhood of $q$ in $\mathscr C$, where functions $f$ and $g$ cut out the divisors meeting at $q$. The edge $e_q$ can be parametrized as the set of monomial valuations
\begin{eqnarray*}
\val_r: R[V]&\to& \RR\sqcup \{\infty\} \\
\sum c_{ij} f^i g^j &\mapsto& \min_i\{ \nu(c_{ij})+ r\cdot i+(\ell_q-r)\cdot j \},
\end{eqnarray*}
for $r\in [0,\ell_q]$. A similar argument shows that the edges of $G_{\mathscr C}$ are mapped onto their images with integer expansion factor. 

\noindent
{\bf Edges map to single cones.} We now check that every edge is mapped to a unique extended cone of $\overline \Delta$. Let $e$ be an edge of $G$ incident to vertices $v_1$ and $v_2$. Assume that $f_{\trop}$ maps $v_1$ and $v_2$ to cones $\sigma_1$ and $\sigma_2$ respectively. It follows that the generic point of the associated component $C_{v_1}$ of the special fiber of $\mathscr C$ maps to the locally closed stratum $V(\sigma_1)$. This implies that the node $q_e$ corresponding to $e$ is mapped to the torus orbit closure $\overline{V(\sigma_1)}$. Repeating this argument for $C_{v_2}$ shows that the node $q_e$ is mapped to the orbit closure $\overline{V(\sigma_2)}$, and that the closures of the orbits $V(\sigma_1)$ and $V(\sigma_2)$ intersect in a locally closed orbit $V(\sigma_e)$ containing $q_e$. Thus, $\sigma_e$ must contain $\sigma_1$ and $\sigma_2$ as faces. This implies that the image of $e$ lies in a single extended cone, namely, $\sigma_e$. 

\noindent
{\bf The stability condition.} We must show that the tropicalization of a family of logarithmic stable maps is stable in the combinatorial sense. It suffices to analyze the $2$-valent vertices of $f_{\trop}(G)$. Let $v$ be such a vertex. By the balancing condition, the edges emanating from $f(v)$ are contained in an affine line in $N_\RR$. Suppose that in a neighborhood of $f(v)$ on this line, $f(v)$ is not the unique intersection of $f_{\trop}(G)$ with an extended cone of $\overline \Delta$. Then $v$ and the two edges emanating from $v$ are contained in a single extended cone $\sigma$. Thus, the component $C_v$ is mapped to the locally closed stratum $V(\sigma)$. The curve $C_v$ is projective and $V(\sigma)$ is a torus, so we conclude that $C_v$ must be contracted by $f$. Since $C_v$ is only $2$-marked, this implies the map $f$ is unstable. Thus, if $f^{\trop}$ is unstable then $f$ must be unstable from which we conclude that the tropicalization of a stable family must be stable. Finally, the balancing condition follows from the structure theorem for tropicalizations~\cite[Theorem 3.3.5]{MS14}. 
\end{proof}

\begin{remark}
On the tropical side, the stability condition formulated in this text is special to toric varieties. The locally closed strata of toric varieties are affine, so maps from complete curves to the strata must be constant. As a consequence, stability of the special fiber of a family of maps can be detected at the tropical side. 
\end{remark}

\subsection{The minimal base structure} In the next section we introduce the notion of an unsaturated stable map associated to a minimal logarithmic stable map following ideas of Chen in the Deligne--Faltings I case~\cite[Section 3.7]{Che10}. For the reader's convenience, we first describe how one can understand the minimal base log structure, closely following~\cite[Construction 1.16]{GS13}.

The moduli space $\Lsm_\Gamma(X)$ carries a universal \textit{minimal logarithmic structure}, with characteristic sheaf $\mathscr Q$. Concretely, this means that given \textit{any} logarithmic stable map $\xi = [f: C\to X]$ over a logarithmic point $\spec(P\to \CC)$, there is a monoid $\mathscr Q(\xi)$, a logarithmic stable map $\xi^{\mathrm{min}}$ over $\spec(\mathscr Q(\xi)\to \CC)$, and a unique map $\mathscr Q(\xi)\to P$, such that $\xi$ is pulled back from $\xi^{\mathrm{min}}$ via $\mathscr Q(\xi)\to P$. The deformation parameters of any nodal curve $C$ give rise to a monoid $\NN^{\mathrm{nodes}}$. The monoid $\mathscr Q(\xi)$ captures the relations between these deformation parameters imposed by contact orders of the map to $X$, and the stratification on $C$ induced by this map. We now describe this more carefully.

Since we wish to describe the universal minimal base, we may and do choose a pull back along any map $P\to \NN$ of monoids, and henceforth $S$ denotes the standard logarithmic point. The first piece of data is imposed by the components of $C$ mapping to the strata of $X$. Let $\eta_i$ be a component of $C$, mapping to a stratum $V(\sigma_i)$. Assume that the integral monoid of the orbit $\sigma_i$ is $N_i$ and let $M_i = \Hom(N_i,\NN)$. The logarithmic structure at the generic point $\eta_i$ is pulled back from the base $\spec(\NN\to\CC)$. By definition, we must have a map $M_i\to \NN$, i.e., an element $v_i\in N_i$. Ranging over components, each logarithmic map over $\spec(\NN\to \CC)$ associates, to each component of $C$, a lattice point in $N$.

The next piece of data comes from the nodes of $C$. Let $q$ be a node of $C$ with branches $\eta^1_q,\eta^2_q$. Assume that the character lattice of the stratum that $q$ maps into is $M_q$. By using Kato's characterization of the logarithmic structure at the node of a logarithmically smooth curve~\cite{Kato00}, the map $f$ induces a map of characteristic monoids
\[
f^\flat_q:M_q\to P\oplus^\NN \NN^2,
\]
where $P\oplus^\NN \NN^2 = P\langle \log x, \log y\rangle / (\log x+\log y = \rho_q)$, and $\rho_q$ is the image in the characteristic of the deformation parameter of the node $C$. By~\cite[Remark 1.2]{GS13}, the monoid pushout $P\oplus^\NN \NN^2$ can be described as the set of pairs
\[
\{(p_1,p_2)\in P\times P:p_2-p_1\in \ZZ\rho_q\}.
\]
The morphism $f_q^\flat$ is equivalent to a map
\[
\varphi_q: M_q\to P\oplus^\NN \NN^2\hookrightarrow P\times P.
\]
Thus, we obtain a homomorphism
\[
u_q: M_q\to \ZZ,
\]
satisfying the relation 
\begin{equation}{\label{minimal-relation}}
(p_2-p_1)\circ \varphi_q(m) = u_q(m)\rho_q.
\end{equation}
These are the ``minimal'' requirements that are imposed by a logarithmic map, on the logarithmic structure of the base. Keeping the notation above, consider the monoid
\[
Q (\xi) = \left(\prod_{\eta_i} M_{\sigma_i}\times \prod_{q: \mathrm{node}} \NN\right)\big/R,
\]
where $R$ is the subgroup generated by the relation~(\ref{minimal-relation}). Let $\mathscr Q^{\mathrm{us}}(\xi)$ be the torsion free part of $Q(\xi)$. By the discussion above, for every logarithmic map over $\spec(P\to \CC)$, any pull back to $\spec(\NN\to \CC)$ must satisfy the above relations. By the mapping property of minimality described above, the characteristic monoids of the universal minimal log structure are the saturations $\mathscr Q(\xi)$ of $\mathscr Q^{\mathrm{us}}(\xi)$. The saturation is defined in~\cite[Chapter I, 1.2.3]{Ogu06}. The superscript $^{\mathrm{us}}$ stands for \textit{unsaturated}. 

The tropically inclined reader may benefit from dualizing this description. The equivalent information of $\Hom(Q(\xi),\RR_{\geq 0})$ is a cone of tropical curves with data determined by $\xi$. However, the second dualization required to recover $Q(\xi)$ automatically saturates it. We will need to work with the monoid before saturation in the next section.

\subsection{Unsaturated logarithmic maps}~\label{sec: coarsemaps} Let $\xi$ be a minimal logarithmic stable map over a geometric point $\underline S$
\[
\xi \ \ \ = 
\begin{tikzcd}
(C,\mathpzc M_C) \arrow{d} \arrow{r}{f} & (X,\mathpzc M_X) \\
(S,\mathpzc M_S). & 
\end{tikzcd}
\]
Since $\xi$ is minimal, the characteristic of ${\mathpzc M}_S$ is naturally isomorphic to $\mathscr Q(\xi)$. 

Let $(S,{\mathpzc M_S^{\mathrm{us}}})$ be the scheme $S$ with the logarithmic substructure generated by $\mathscr Q^{\mathrm{us}}$ in $\mathpzc M_S$. There is a moduli map $\varphi: \underline S\to \fM_{0,n+m}$ to the log smooth Artin stack of marked pre-stable curves. Let $\mathpzc M_S^{\varphi}$ be the pullback of this logarithmic structure on $\fM_{0,n+m}$ to $\underline S$. The characteristic of $\mathpzc M_S^{\varphi}$ is $\NN^{\# \ \mathrm{nodes}}$. There is natural map of characteristic monoids
\[
\overline{\mathpzc M_S^\varphi}\to \mathscr Q(\xi),
\]
factoring through $\mathscr Q^{\mathrm{us}}(\xi)$, so the map $\mathpzc{M}_S^\varphi\to \mathpzc M_S$ factors through $\mathpzc M_S^{\mathrm{us}}$. This induces a logarithmic curve $(C,\mathpzc M_C^{\mathrm{us}})\to (S,{\mathpzc M_S^{\mathrm{us}}})$. Similarly, by inspecting the characteristic, we see that the map $f^\star\mathpzc M_X\to \mathpzc M_C$ factors through $\mathpzc M_C^{\mathrm{us}}$, so we obtain an induced logarithmic map
\[
\xi^{\mathrm{us}} \ \ \ = 
\begin{tikzcd}
(C,\mathpzc M_C^{\mathrm{us}}) \arrow{d} \arrow{r}{f^{\mathrm{us}}} & (X,\mathpzc M_X) \\
(S,\mathpzc M_S^{\mathrm{us}}). & 
\end{tikzcd}
\]

\noindent

\begin{definition}
Let $\xi$ be a minimal logarithmic stable map. The logarithmic map $\xi^{\mathrm{us}}$ is called the \textbf{unsaturated logarithmic map associated to $\xi$}.
\end{definition}

\begin{remark}
In~\cite{Che10}, the notion of an unsaturated logarithmic map is referred to as a \textit{coarse map}. We prefer the term unsaturated as it avoids conflict with other notions of ``coarse'' in the subject.
\end{remark}

Given a minimal logarithmic stable map $\xi$ with discrete data $\Gamma$, it follows from~\cite[Remark 1.2]{GS13} that $\Hom(\mathscr Q(\xi),\RR_{\geq 0})$ is a cone of tropical stable maps with discrete data $\Gamma$. We refer to the combinatorial type $\Theta$ of the tropical maps parametrized by this cone as the \textit{combinatorial type} of $\xi$.

The following proposition is analogous to~\cite[Lemma 3.7.4]{Che10}. In fact, the result holds for logarithmic stable maps of any genus, with target a logarithmically smooth scheme $X$ such that the sheaf of groups associated to the characteristic $\overline{\mathpzc{M}}_X$ is globally generated, as in~\cite{GS13}.

\begin{proposition}\label{prop: coarse}
Let $\xi_1 = (C\to S,\mathpzc{M}_{S,1},f_1)$ and $\xi_2 = (C\to S,\mathpzc{M}_{S,2},f_2)$ be minimal log stable maps such that the combinatorial types of $\xi_1$ and $\xi_2$ coincide, and the underlying stable maps $\underline \xi_1$ and $\underline \xi_2$ coincide. Then, there is a canonical isomorphism of unsaturated log maps $\xi_1^{\mathrm{us}}\cong \xi_2^{\mathrm{us}}$.
\end{proposition}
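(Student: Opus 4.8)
The plan is to show that the unsaturated map $\xi^{\mathrm{us}}$ is reconstructed functorially from the pair $(\underline\xi,\Theta)$ consisting of the underlying stable map and the combinatorial type. Since both of these agree for $\xi_1$ and $\xi_2$ by hypothesis, this immediately produces the desired canonical isomorphism $\xi_1^{\mathrm{us}}\cong\xi_2^{\mathrm{us}}$. Throughout I use that the base $\underline S$ is a geometric point, so that $\mathscr O_S$ is a field; this reducedness is what rigidifies the log structures. The argument is local on $\underline C$ and follows the pattern of \cite[Lemma 3.7.4]{Che10} and \cite[Construction 1.16]{GS13}.

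I would begin with the base and the curve. By the explicit description recalled above, $\mathscr Q^{\mathrm{us}}(\xi)$ is the image of $\prod_{\eta_i}M_{\sigma_i}\times\prod_q\NN$ in the torsion-free quotient of its group by the subgroup $R$ generated by the relations $(\ref{minimal-relation})$; the strata $\sigma_i$, the node set, and the homomorphisms $u_q$ occurring there are all part of $\Theta$ and $\underline\xi$, so $\mathscr Q^{\mathrm{us}}(\xi_1)$ and $\mathscr Q^{\mathrm{us}}(\xi_2)$ coincide, together with their distinguished generators. Since $\mathscr Q^{\mathrm{us}}$ is a sharp monoid (being a submonoid of the sharp monoid $\mathscr Q(\xi)$) and $\mathscr O_S$ is a field, the structure morphism $\mathscr Q^{\mathrm{us}}\to\mathscr O_S$ underlying $\mathpzc M_S^{\mathrm{us}}$ sends every nonzero element to $0$; hence $\mathpzc M_S^{\mathrm{us}}$, together with its chart, is the log structure canonically associated to this prelog structure, and so is identified between $\xi_1$ and $\xi_2$. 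The log curve $(\underline C,\mathpzc M_C^{\mathrm{us}})\to(\underline S,\mathpzc M_S^{\mathrm{us}})$ is then reconstructed from this base by Kato's local classification of log smooth curves \cite{Kato00}: pulled back from the base away from nodes and markings; the standard $\pi^\star\mathpzc M_S^{\mathrm{us}}\oplus\NN$ near a marking; and the pushout $\pi^\star\mathpzc M_S^{\mathrm{us}}\oplus_\NN\NN^2$ near a node $q$, formed along the diagonal and along the distinguished generator $\rho_q\in\mathscr Q^{\mathrm{us}}$ that $\Theta$ assigns to $q$. So $\mathpzc M_C^{\mathrm{us}}$ is canonically identified as well.

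The remaining and main point is to pin down the log morphism $f^{\mathrm{us},\flat}\colon\underline f^\star\mathpzc M_X\to\mathpzc M_C^{\mathrm{us}}$. Over the generic point $\eta_i$ of a component whose image lies in an orbit $V(\sigma_i)$, the induced map of characteristics $M_{\sigma_i}=\overline{\mathpzc M}_{X,\underline f(\eta_i)}\to\overline{\mathpzc M}_{C,\eta_i}=\mathscr Q^{\mathrm{us}}$ is the tautological map onto the $\eta_i$-factor in the presentation of $\mathscr Q^{\mathrm{us}}$, hence fixed by $\Theta$. I would then lift this to a morphism of log structures $M_{\sigma_i}\to\mathpzc M_C^{\mathrm{us}}$ and verify that it is the unique lift compatible with $\mathscr O_C$: on the unsaturated substructure a lift is pinned down by the values of $f^\star$ on the characters whose zero loci meet the markings and nodes of the component --- where $\underline\xi$ and the contact orders recorded in $\Theta$ prescribe orders of vanishing --- and, crucially, no further freedom survives because $\mathscr Q^{\mathrm{us}}$ contains no proper roots of its generators. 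This is precisely the torsion (root-of-unity) ambiguity by which the minimal maps $\xi_1$ and $\xi_2$ are permitted to differ, and which disappears on passing to $\xi^{\mathrm{us}}$. At a node $q$ mapping to $V(\sigma_q)$ one runs the analogous argument using the pushout presentation of $\mathpzc M_C^{\mathrm{us}}$ near $q$, the relation $(\ref{minimal-relation})$ now fixing the behaviour along the two branches through $u_q$. Finally I would check that the morphisms produced on the various local charts agree on overlaps --- there the ambient orbit has strictly smaller codimension and the two descriptions restrict to the same map --- so that they glue to a single $\underline f^\star\mathpzc M_X\to\mathpzc M_C^{\mathrm{us}}$, which by construction coincides with $f_i^{\mathrm{us},\flat}$ for each $i$.

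I expect the main obstacle to be the uniqueness of the lift in the previous paragraph: making precise that restricting from $\mathpzc M_C$ to the substructure generated by $\mathscr Q^{\mathrm{us}}$ removes exactly the torsion indeterminacy inherent in minimal log maps while leaving nothing undetermined. This is the substance of the corresponding arguments in \cite[Lemma 3.7.4]{Che10} and \cite[Construction 1.16]{GS13}, which I would adapt to the present toric setting; the remaining gluing and the bookkeeping of which orbit each point of $\underline C$ maps into are routine but require some care.
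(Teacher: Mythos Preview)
Your overall strategy---reconstruct $\xi^{\mathrm{us}}$ functorially from $(\underline\xi,\Theta)$---is different from the paper's, which instead constructs the isomorphism $\Psi\colon\mathpzc M_{C,1}^{\mathrm{us}}\to\mathpzc M_{C,2}^{\mathrm{us}}$ directly. The paper observes that both unsaturated structures receive maps from the common objects $(\underline f)^\star\mathpzc M_X$ and the pulled-back curve log structure $\mathpzc M_S^\varphi$ from $\fM_{0,n+m}$. Since the characteristic $\mathscr Q^{\mathrm{us}}$ is by definition \emph{generated} by the images of $\prod_{\eta_i}M_{\sigma_i}\times\prod_q\NN$, one defines $\Psi$ on generators by lifting through one structure map and pushing down through the other: a node element $\overline e$ lifts to $\mathpzc M_S^\varphi$ via $\varphi_1$ and is sent to $\varphi_2(e)$; a component element $\overline v$ lifts to $M_{\sigma_i}\subset(\underline f)^\star\mathpzc M_X$ and is sent to $(f_2^{\mathrm{us}})^\flat(v)$. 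Equality of combinatorial types guarantees this descends through the relations $R$, and swapping the roles of $\xi_1,\xi_2$ gives the inverse.

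The advantage of the paper's route is that it sidesteps exactly the step you flag as the main obstacle. In your reconstruction, you need to argue that the lift of the characteristic map $M_{\sigma_i}\to\mathscr Q^{\mathrm{us}}$ to a log-structure map $M_{\sigma_i}\to\mathpzc M_C^{\mathrm{us}}$ is uniquely determined by $\underline\xi$ and $\Theta$. But at the generic point $\eta_i$ of a component lying in $V(\sigma_i)$, every nontrivial monomial $\chi^m$ with $m\in M_{\sigma_i}$ pulls back to the zero function, so compatibility with $\mathscr O_C$ imposes no constraint on the unit part of the lift there; your appeal to ``values of $f^\star$ on characters whose zero loci meet the markings and nodes'' does not pin this down without further work. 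The paper never needs such a uniqueness statement: it uses the \emph{given} lifts $(f_i^{\mathrm{us}})^\flat$ and $\varphi_i$ to transport structure, and the only thing to check is well-definedness on $\mathscr Q^{\mathrm{us}}$, which is immediate from the shared combinatorial type. Your plan can likely be completed, but the paper's argument is both shorter and avoids this delicate point entirely.
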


\begin{proof}
Let $\mathpzc{M}_{C,1}^{\mathrm{us}}$ and $\mathpzc{M}_{C,2}^{\mathrm{us}}$ denote the unsaturated logarithmic structures on the source curves of $\xi_1$ and $\xi_2$ respectively. Since the underlying maps $\underline\xi_1$ and $\underline\xi_2$ coincide, we may pull back the logarithmic structure on $X$ via the underlying map of $\xi_i$. The curve $\underline C\to \underline S$ gives a moduli map $\varphi: \underline S\to \fM_{0,n+m}$, so we may pull back the logarithmic structure on the stack $\fM_{0,n+m}$ to $C$. Putting these maps together, we have a diagram of solid arrows:
\[
\begin{tikzcd}
& \mathpzc{M}_{C,1}^{\mathrm{us}} \arrow[dotted]{dd}{\Psi}& \\
(\underline f^{\mathrm{us}})^\star \mathpzc{M}_X\arrow[swap]{dr}{(f_2^{\mathrm{us}})^\flat}\arrow{ur}{(f_1^{\mathrm{us}})^\flat}& & \mathpzc{M}_S^\varphi \arrow{dl}{\varphi_2} \arrow[swap]{ul}{\varphi_1}\\
& \mathpzc{M}_{C,2}^{\mathrm{us}} &
\end{tikzcd}
\]
Denote by $\pi: \underline C\to \underline S$ the underlying family of curves. The underlying structures of $\xi_1$ and $\xi_2$ coincide, so to construct $\Psi$ it suffices to construct a map $\psi: \pi^\star\mathpzc{M}_{S,1}^{\mathrm{us}}\to \pi^\star\mathpzc{M}_{S,2}^{\mathrm{us}}$. Choose a chart $\mathscr Q^{\mathrm{us}}(\xi_1)\to \mathpzc{M}_{C,1}^{\mathrm{us}}$. Recall that we have a presentation of the unsaturated characteristic
\[
\left(\prod_{\eta_i} M_{\sigma_i}\times \prod_{q: \mathrm{node}} \NN\right)\to \mathscr Q^{\mathrm{us}}(\xi_j).
\]
Let $\overline e$ be an element of $\mathpzc{M}^{\mathrm{us}}_{C,1}$ corresponding to a node. There is an element $e\in \mathpzc{M}^\varphi_S$ lifting $\overline e$ via $\varphi_1$. Define $\psi(\overline e) = \varphi_2(e)$. Now consider $\overline v\in \mathpzc{M}^{\mathrm{us}}_{C,1}$ corresponding to the generic point of a component of $C$. By construction we have $v\in M_{\sigma_i}$ mapping to $\overline v$. Define $\psi(\overline v) = (f_2^{\mathrm{us}})^\flat(v)$. The combinatorial type of $\xi_i$ is $\Hom(\mathscr Q^{\mathrm{us}}(\xi_j),\RR_{\geq 0})$ and since the combinatorial types of $\xi_1$ and $\xi_2$ coincide, this map descends via $\left(\prod_{\eta_i} M_{\sigma_i}\times \prod_{q: \mathrm{node}} \NN\right)\to \mathscr Q^{\mathrm{us}}(\xi)$, so $\Psi$ is well defined. Interchanging the roles of the log structures $\mathpzc{M}_{C,1}$ and $\mathpzc{M}_{C,2}$, we obtain an inverse map to $\Psi$ defining the desired isomorphism. The result follows.
\end{proof}

%

\subsection{Proof of Theorem~\ref{thm: skeleton}}

We divide the proof into three steps. First we show that the set theoretic map $\trop$ factors as 
\[
\begin{tikzcd}
\Lsm^{\an}_\Gamma(X) \arrow[swap]{dr}{\bm p} \arrow{rr}{\trop} & & \Tsm_\Gamma(\Delta) \\
& \overline \fS \arrow[swap]{ur}{\trop_\fS}. & \\
\end{tikzcd}
\]
We then show that $\trop_\fS$ is surjective, and then injective. Since both $\overline \fS$ and $\Tsm_\Gamma(\Delta)$ are canonical compactifications associated to cone complexes, it suffices to prove the commutativity of the diagram above replacing $\Lsm_\Gamma^{\an}(X)$ with $\Lsm_\Gamma^{\an,\circ}(X)$, and $\Tsm_\Gamma(\Delta)$ and $\overline \fS$ with their interior cone complexes $\Tsm^\circ_\Gamma(\Delta)$ and $\fS$ respectively. 

\noindent
{\bf Step I. The tropicalization map factors.} A point $x\in \Lsm^{\circ,\an}_\Gamma(X)$ can be represented by a map $\spec(K)\to \Lsm^\circ_\Gamma(X)$, where $K$ is a valued field extending $\CC$. Since $\Lsm_\Gamma(X)$ is proper as a stack~\cite[Theorem 0.2]{GS13}, after replacing $K$ with a ramified extension with valuation ring $R$, we obtain a map $\spec(R)\to \Lsm_\Gamma(X)$. This yields a family of logarithmic stable maps
\[
\begin{tikzcd}
C\arrow{d}\arrow{r} & \mathscr C \arrow{d}\arrow{r}{f} & X \\
\spec(K)\arrow{r} & \spec(R) & 
\end{tikzcd}
\]
There is an associated tropical map $f_{\trop}$ as constructed in Section~\ref{sec: pointwise-trop}, and we interpret $[f_{\trop}]$ as the point $\trop(x)\in \Tsm^\circ_\Gamma(\Delta)$. 

We now compute $\bm p(x)$. Let $\overline x$ be the image of the closed point of the moduli map $\spec(R)\to \Lsm_\Gamma(X)$ and let $\Theta$ be the combinatorial type of the logarithmic map associated to $\overline x$. By logarithmic smoothness, proved in Proposition~\ref{prop: logsmooth}, there is an \'etale local neighborhood $U$ of $\overline x$ isomorphic to  $\spec(\CC\llbracket Q\rrbracket)\times \mathbb G_m^r$, where $Q$ is the minimal monoid at $\overline x$. By~\cite[Proposition 6.2]{U13}, the skeleton of $(U\cap \Lsm^\circ_\Gamma(X))^\beth$ is $\Hom(Q,\RR_{\geq 0})$. From~\cite[Remark 1.21]{GS13} this cone is canonically identified with $\sigma_\Theta$, the cone of tropical stable maps with combinatorial type $\Theta$. 

Now, suppose $x\in \fS\subset \Lsm^{\an,\circ}_\Gamma(X)$ is a point of the skeleton. Any point of the analytic space gives rise to a valuation. In this case, $x$ furnishes a valuation for each monomial function in $Q$, corresponding to products of deformation parameters of nodes. This produces a monoid map $\varphi: Q\to \RR_{\geq 0}$. The point $x$ is then represented by the composite
\[
\CC[U]\xrightarrow{\val_\varphi} \CC\llbracket \RR_{\geq 0}\rrbracket \xrightarrow{\mathrm{ord}} \RR_{\geq 0},
\]
where $\val_\varphi$ is the valuation induced by the map $\varphi$ above and $\mathrm{ord}$ is the standard valuation on $\CC\llbracket \RR_{\geq 0}\rrbracket$. It follows that $\trop_\fS$ maps $\fS(U^\beth)$ isomorphically onto the cone $\sigma_\Theta$, and we conclude that
\[
\trop(x) = \trop_\fS\circ \bm p(x),
\]
so the map factors as claimed. Upon restriction to any face of $\fS$ the same calculation shows that $\trop_\fS$ is an isomorphism onto its image, i.e. $\trop_\fS$ is a face morphism. 

\noindent
{\bf Step II. The map $\trop_\fS$ is surjective} Let $[f_\trop:G \to \overline\Delta]$ be a tropical stable map from a smooth tropical curve of combinatorial type $\Theta$. Let $\underline{\mathscr C}_0$ be a marked rational nodal curve with dual graph $\underline G$. We wish to build a logarithmic map $\underline f_0: \underline{\mathscr C}_0\to X$ with discrete data given by $f_\trop$. Suppose $v$ is a vertex of $G$ mapping to the relative interior of $\sigma_v$. There is a natural map induced by $f_\trop$,
\[
f^v_\trop: G_v\to N(\sigma_v) = N/\sigma_v,
\]
where $G_v$ is the star of $G$ around $v$. The graph $G_v$ has a single vertex $v$ and $r$ infinite outgoing edges in bijection with the outgoing edges of $G$ at $v$. The map $f_\trop^v$ sends the vertex $v$ to $0\in N(\sigma_v)$, and maps each infinite edge $e_i$ to $N(\sigma_v)$ with expansion factor and edge direction given by the contact order $c_{e_i}\in N/\sigma_v^{\mathrm{gp}}$. We now construct a logarithmic map to $\overline V(\sigma_v)$ that is dual to $f^v_\trop$. After a toric modification $\widetilde V\to \overline V(\sigma_v)$, we may assume that the contact orders of $f^v_\trop$ are torically transverse. $f^v_\trop$ is determines the discrete data of a map to $\widetilde V$. 

We first construct a marked rational curve dual to $C_v$ with a logarithmic to $\widetilde V$ with contact orders prescribed by $f^v_\trop$. Since the contact order is torically transverse, the contact order is determined by the order of tangency with the boundary of $\widetilde V$, so such a map always exists by Proposition~\ref{prop: lsm-m0n}. We obtain a logarithmic map to the original stratum $V(\sigma_v)$ by composing the underlying map with $\widetilde V\to V(\sigma_v)$, and pushing forward the logarithmic structure using the results of~\cite[Appendix B]{AMW12}. We have constructed the desired map
\[
f^v_0:C_v\to \overline V(\sigma_v)
\]
Ranging over all vertices $v$, it is now straightforward to check that these maps $f^v_0$ glue to form a logarithmic map
\[
f_0:{\mathscr C_0}\to X.
\]
As argued in~\cite[Section 3.3]{R15a}, by the existence and logarithmic smoothness of $\Lsm_\Gamma(X)$, we obtain a family
\begin{equation}\label{curve-over-C[Q]}
\begin{tikzcd}
\mathscr C' \arrow{r} \arrow{d} & X \\
\spec(Q\to \CC\llbracket Q\rrbracket). & \\
\end{tikzcd}
\end{equation}
The point $[f_\trop]$ is equivalent to a monoid homomorphism $Q\to \RR_{\geq 0}$ and thus a homomorphism $\varphi: \CC\llbracket Q\rrbracket \to \CC\llbracket \RR_{\geq 0}\rrbracket$. Pulling back along $\varphi$, we obtain a family
\[
\begin{tikzcd}
\mathscr C \arrow{r}{f} \arrow{d} & X \\
\spec(\CC\llbracket \RR_{\geq 0}\rrbracket). & \\
\end{tikzcd}
\]
One can check as in~\cite[Section 3.3]{R15a} that the tropicalization of the family of maps $f$, in the sense of Section~\ref{sec: pointwise-trop}, is precisely $f_\trop$. By construction, $[f]$ is a valuation in the monomial coordinates induced by the monoid $Q$, and thus is a point of the space $ \fS\subset\Lsm^{\an,\circ}_\Gamma(X)$. This proves that $\trop_\fS$ is surjective.

\noindent
{\bf Step III. The map $\trop_\fS$ is injective.} Let $W$ be the locus in $\Lsm_\Gamma(X)$ parametrizing maps of a fixed combinatorial type $\Theta$. Let $\underline G$ be the dual graph of the source curve of a generic map in $W$. As in Step II, the underlying maps parametrized by $W$ can be formed by gluing logarithmic stable maps parametrized by the spaces $\Lsm^\circ_{\Gamma(v)}(\overline V(\sigma_v))$, such that the two attaching points corresponding to a node of $\mathscr C_0$ map to the same point in $X$. It follows from the explicit description of $\Lsm^\circ_{\Gamma(v)}(\overline V(\sigma_v))$ in Lemma~\ref{lem: fixed-boundary} and Proposition~\ref{prop: irred} that this space of underlying maps associated to logarithmic maps parametrized by $W$ is irreducible.

By the irreducibility of the space of underlying maps of $W$, to prove that $\trop_\fS$ is injective, it suffices to show that given two logarithmic stable maps $\xi_1 = [f_1:C_1\to X]$ and $\xi_2 = [f_2:C_2\to X]$ with the same underlying structure and combinatorial type $\Theta$, $\xi_1 = \xi_2$. In turn, by applying Proposition~\ref{prop: coarse} it suffices to show that the characteristic monoid $\mathscr Q^{\mathrm{us}}(\xi_i)$ is already saturated. Recall that this unsaturated characteristic was defined as the torsion free part of the quotient
\[
Q (\xi) = \left(\prod_{\eta_i} M_{\sigma_i}\times \prod_{q: \mathrm{node}} \NN\right){\Huge/}R.
\]
There is a relation imposed by each node of the source curve $C$ of $\xi$, i.e. by each edge in the dual graph $C$. For each edge $e$ incident to vertices $v$ and $u$, this relation is of the form
\begin{equation}\label{relation}
v = u+w_e\cdot e,
\end{equation}
where $w_e$ is the expansion factor along $e$. 

Let $v_0$ be any non-leaf vertex of the dual graph of $C$. Since the dual graph of $C$ is a tree, given any other vertex $v_i$, there exists a unique path between $v_0$ and $v_i$. By iteratively applying the relation (\ref{relation}) we may uniquely write
\[
v_i = v_0+\sum \pm w_{jk} e_{jk}.
\]
Let $\sigma$ be the cone to which the root vertex of the dual graph of $C$ maps to (i.e. the vertex supporting the first marked point of trivial contact order). It follows that $\mathscr Q^{\mathrm{us}}(\xi)$ is already saturated and we conclude that $\xi_1\cong \xi_2$.
\qed

\begin{remark}\label{rem: non-normality}
The fact that $\mathscr Q^{\mathrm{us}}(\xi)$ is already saturated relies crucially on the fact that we work with genus $0$ stable maps, as the following example illustrates. Consider two rational curves $C_1$ and $C_2$, each mapping to $\PP^1$ by a degree $5$ map, totally ramified over $0$ and with ramification $(2,3)$ over $\infty$. Glue the two maps $C_i\to \PP^1$ by gluing the source curves along the ramified preimages of $\infty$, and gluing the target curves along $\infty$. The resulting glued map $C\to \PP^1\sqcup_\infty \PP^1$ is logarithmic stable map, see Figure~\ref{fig: deg-5-cover}. Consider the map $C\to \PP^1$ obtained by composing the glued map $C\to \PP^1\sqcup_\infty \PP^1$ with the map contracting the second component of the target. The unsaturated minimal characteristic of this map is the quotient of 
\[
\langle v_1,e_1,e_2\rangle
\]
by $v_1 = 2e_1$ and $v_1 = 3e_2$. The unsaturated characteristic is isomorphic to the submonoid of $\NN$ generated by $2$ and $3$ which is not saturated. We refer to~\cite[Section 4.2]{CMR14a} for details.\newpage

\begin{figure}[h!]
\includegraphics{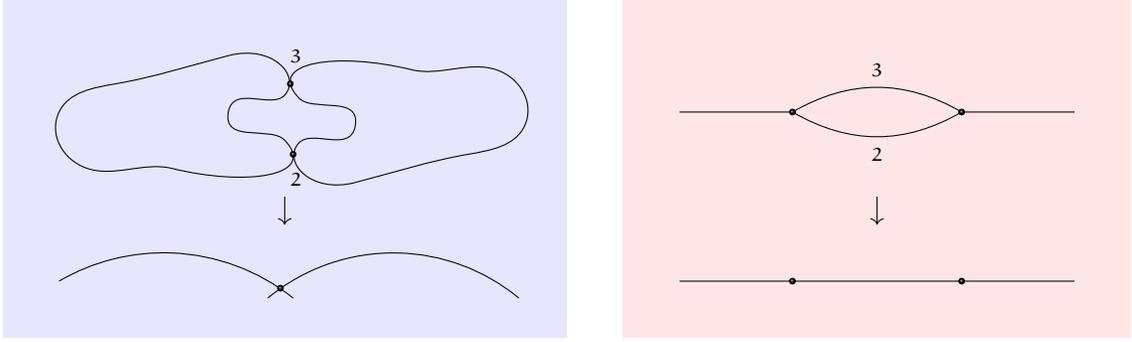}
\caption{The figure on the left depicts a nodal curve of genus $1$ covering a nodal rational curve with degree $5$. On the right we depict the tropical dual graphs.}
\label{fig: deg-5-cover}
\end{figure}
\end{remark}

%


\section{Compactification in a toric variety}

\subsection{Extended example: The logarithmic dual projective plane}\label{sec: extended-example} We illustrate the tropical and algebraic spaces of logarithmic stable maps in an elementary example that illustrates the approach to Theorem~\ref{thm: trop-comp}. 

Fix $X = \PP^2$ with its toric structure. Consider maps from rational curves of degree $1$, with contact order $1$ with each toric boundary. The moduli space $\Lsm_\Gamma(\PP^2)$ is $2$-dimensional. Indeed it is birational to the dual projective plane $\check{\PP}^2$. These two spaces are not isomorphic, since the coordinate lines $V(X_i)$ in $\PP^2$ does not meet itself transversely. The main takeaway of this section is that $\Lsm_\Gamma(\PP^2)$ admits a map to $\check{\PP}^2$ that is a toroidal modification. This toroidal modification is induced by the map on tropical fans $\Tsm^\circ_\Gamma(\Delta_{\PP^2})\to\Delta_{\check{\PP}^2}$. 

\noindent
{\bf (Algebraic side)} The moduli space $\Lsm_\Gamma(\PP^2)$ is logarithmically smooth and stratified into locally closed strata, which we now describe. Let $\mathpzc \overline {\mathpzc M}_L$ denote the logarithmic structure sheaf of $\Lsm_\Gamma(\PP^2)$. There is a dense open set $\Lsm_\Gamma^\circ(\PP^2)$ where the characteristic $\overline{\mathpzc M}_L = 0$. Let $S$ be a geometric point in this locus. Pulling back the universal family, we obtain a map $[f:C\to \PP^2]$. The curve $C$ is smooth, with marked points $p_1,p_2,p_3$. The image of  $C\setminus \{p_i\}$ lies in the dense torus $T$, and $p_i$ is mapped to the $i^{\mathrm{th}}$ boundary divisor $D_i$, with intersection multiplicity $1$.

There is a locally closed stratum in $\Lsm_\Gamma(\PP^2)$ where $\overline{\mathpzc M}_L$ is isomorphic to $\NN$,  described as follows. The marked curve $(C,p_1,p_2,p_3)$ maps isomorphically onto a boundary divisor $D_1$ of the target $\PP^2$. The stalk of the characteristic monoid at the generic point of $C$ is $\NN$. The points $p_2$ and $p_3$ are mapped to $D_1\cap D_2$ and $D_1\cap D_3$ respectively. The point $p_1$ can map to any point in the dense torus of the toric subvariety $D_1$, so this locus is $1$-dimensional. Permuting the roles of $p_1,p_2,p_3$ we obtain $3$ such $1$-dimensional strata. There is another type of locally closed stratum where $\overline{\mathpzc M}_L$ is isomorphic to $\NN$. This consists of maps from a nodal curve $C = C_1\cup C_2$, where $C_1$ carries the marking $p_1$ and $C_2$ carries the markings $p_2$ and $p_3$. The class $f_\star[C]$ is the class of a line, so $f$ must contract a component. By stability it must contract $C_2$. Since $C_2$ carries the markings $p_2$ and $p_3$, in order to satisfy the contact order, $C_2$ must contract to the torus fixed point $D_2\cap D_3$. Thus, the curve $C_1$ passes through the point $D_2\cap D_3$. Any two distinct lines meet in $\PP^2$, so the image of $C_1$ can be any line through $D_2\cap D_3$, and will automatically satisfy the contact order at $D_1$. The monoid at the generic point of $C_2$ is $\NN^2$, and the monoid is trivial along $C_1$. Maps parametrized by this locus are in natural bijection with lines in $\PP^2$ passing through a given point $D_2\cap D_3$, so we obtain $3$ more $1$-dimensional strata.

There are six points at which the sheaf $\overline{\mathpzc M}_L$ is $\NN^2$, and these lie in the closures of the strata described above. When $C = C_1\cup C_2$ and the map $f$ contracts $C_2$ as above, the line $C_1$ could map isomorphically onto either $D_2$ or $D_3$. It is not hard to enumerate the $5$ remaining $0$-dimensional strata, and we leave this to an interested reader. 

Finally, observe that we may send each map $[f:C\to \PP^2]$ to the image curve $f(C)$, which is a line in $\PP^2$, and thus a point of $\check{\PP}^2$. This gives us a logarithmic proper birational map
\[
\gamma: \Lsm_\Gamma(\PP^2)\to \check{\PP}^2.
\]
From the description above, $\Lsm_\Gamma^\circ(\PP^2)$ maps isomorphically onto $\check{T}$, the dense torus of $\check{\PP}^2$. The space $\check{\PP}^2$ is a logarithmically smooth moduli space of lines in $\PP^2$, stratified by the intersection type of each line with the toric boundary of $\PP^2$. We see from the description of $\Lsm_\Gamma(\PP^2)$ that each torus fixed point of $\check{\PP}^2$ is blown up by $\gamma$. In summary we have proved the following ``toy version'' of Theorem~\ref{thm: trop-comp}.

\begin{theorem}\label{toytheorem}
The logarithmic dual projective plane $\Lsm_\Gamma(\PP^2)$ is isomorphic to $\mathrm{Bl}_{3 \ \! \mathrm{pts}} \check{\PP}^2$. 
\end{theorem}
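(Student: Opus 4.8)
The plan is to analyze the logarithmic birational morphism $\gamma\colon \Lsm_\Gamma(\PP^2)\to \check\PP^2$ constructed above, which sends a stable map to its image line, and to identify it with the blow-up of $\check\PP^2$ at its three torus fixed points $[D_1],[D_2],[D_3]$. I would begin by recording that $\Lsm_\Gamma(\PP^2)$ is a \emph{smooth} projective surface: properness is part of the construction of $\Lsm_\Gamma$, irreducibility is Proposition~\ref{prop: irred}, the dimension is $2$ by Proposition~\ref{prop: logsmooth}, and smoothness follows from the stratification computed above, since $\Lsm_\Gamma(\PP^2)$ is logarithmically smooth over $\spec(\CC)$, hence toroidal, and at each of the six points where $\overline{\mathpzc M}_L\cong\NN^2$ the local model is $\A^2$, while at every other point the characteristic is $0$ or $\NN$; thus every cone of the associated cone complex is unimodular. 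Equivalently, the map of cone complexes $\Tsm^\circ_\Gamma(\Delta_{\PP^2})\to\Delta_{\check\PP^2}$ induced by $\gamma$ subdivides the smooth fan of $\check\PP^2$ into smooth cones.

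Next I would show that $\gamma$ is an isomorphism over $\check\PP^2\setminus\{[D_1],[D_2],[D_3]\}$. A point of this locus is a line $\ell\subset\PP^2$ that is not a coordinate line, and one checks from the explicit description of the interior (Proposition~\ref{prop: lsm-m0n} and Lemma~\ref{lem: fixed-boundary}) that there is a unique logarithmic stable map with image $\ell$: when $\ell$ meets the toric boundary transversally in three distinct smooth points it is the evident map from an irreducible $\PP^1$, and when $\ell$ passes through a torus fixed point $P$ of $\PP^2$ it is the nodal configuration obtained by attaching to this $\PP^1$ a component contracted to $P$ and carrying the two markings that the contact conditions force onto $P$. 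In both cases the map is rigid, so $\gamma$ is bijective over this open set; since the source is normal, $\gamma$ is an isomorphism there.

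It remains to identify $\gamma^{-1}([D_i])$. From the stratification, the logarithmic stable maps with image line $D_i$ are exactly those whose source maps isomorphically onto $D_i$ — a one-parameter family indexed by the position in $D_i^\circ\cong\mathbb{G}_m$ of the unique marked point free to move — together with its two bubbled limits, which are among the six deepest strata. Hence $E_i:=\gamma^{-1}([D_i])$ is an irreducible rational curve, and $E_1,E_2,E_3$ have distinct images, hence are disjoint. Since $\gamma$ is a proper birational morphism between smooth surfaces which is an isomorphism away from $E_1\cup E_2\cup E_3$ and contracts each $E_i$ to a point, standard surface theory forces each $E_i$ to be a $(-1)$-curve and $\gamma$ to be the simultaneous blow-down of $E_1\sqcup E_2\sqcup E_3$; equivalently $\gamma$ is the blow-up of $\check\PP^2$ at the reduced points $[D_1],[D_2],[D_3]$. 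This yields $\Lsm_\Gamma(\PP^2)\cong \mathrm{Bl}_{3\,\mathrm{pts}}\check\PP^2$.

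I expect the main obstacle to lie in the local analysis supporting the last two steps: confirming that over each non-coordinate line the possibly nodal logarithmic stable map is genuinely the unique one and is unobstructed, and that the fibre over each $[D_i]$ carries no extra contracted components and is reduced. In the language of the rest of the paper this is the statement that $\Tsm^\circ_\Gamma(\Delta_{\PP^2})\to\Delta_{\check\PP^2}$ is precisely the star subdivision of $\Delta_{\check\PP^2}$ at the barycentres of its three maximal cones — so that $\gamma$ is the associated toroidal modification, which is $\mathrm{Bl}_{3\,\mathrm{pts}}\check\PP^2$ — and the bookkeeping of the two-dimensional combinatorial types of tropical stable maps to $\Delta_{\PP^2}$ makes both this and the unimodularity used above transparent.
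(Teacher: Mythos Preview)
Your proposal is correct and follows the same approach as the paper, which simply summarizes the preceding stratification discussion with the sentence ``we see from the description of $\Lsm_\Gamma(\PP^2)$ that each torus fixed point of $\check{\PP}^2$ is blown up by $\gamma$.'' Your version is more careful --- recording smoothness via unimodularity of the cones and invoking standard surface theory to identify the contraction --- but the underlying argument is identical.
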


\noindent
{\bf (Tropical side)} We fix a presentation of $\Delta_{\PP^2}$. Its three one-dimensional cones are spanned by primitives $u_1,u_2,u_3\in \ZZ^2$ such that $\sum u_i = 0$. Two dimensional cones are spanned by distinct pairs $\langle u_j,u_k\rangle$.

Let $[f:G\to \overline \Delta_{\PP^2}]$ be a map parametrized by a point of $\Tsm^\circ_\Gamma(\Delta_{\PP^2})$. The underlying set of the image of $f$ is a trivalent graph, with a single trivalent ``root'' vertex, and $3$ infinite edges $e_1,e_2,e_3$. Thus, the set underlying $\Tsm_\Gamma^\circ(\Delta_{\PP^2})$ is in bijection with $|\Delta|\cong \RR^2$, where the bijection takes a map to the image of the root vertex in $\Delta$. It remains to determine the fan structure on $\Tsm_\Gamma(\Delta_{\PP^2})$. 

We have thus far ignored possible divalent vertices on $G$. Recall that we require that a tropical stable map sends each edge of $G$ to a single cone of $\Delta_{\PP^2}$. Given a position in $|\Delta_{\PP^2}|$ for the root vertex $v$ of $G$, there is a unique (possibly trivial) subdivision of $G$ that makes this subdivision hold. We describe the cones resulting from this subdivision.

There is a $0$-dimensional cone of  $\Tsm^\circ_\Gamma(\Delta_{\PP^2})$ corresponding to the trivalent graph $G$, mapping to $0\in \Delta_{\PP^2}$, and the $3$ infinite edges of $G$ mapping along the one-skeleton of $\Delta_{\PP^2}$. 

There is a $1$-dimensional cone of $\Tsm^\circ_\Gamma(\Delta_{\PP^2})$ parametrizing maps where $v$ is mapped to a $1$-dimensional ray in the fan $\Delta_{\PP^2}$. We observe by inspection that each edge of $G$ maps to a single cone of $\Delta_{\PP^2}$ and the stability condition clearly holds. By permuting the roles of the rays of the one skeleton, we obtain $3$ such $1$-dimensional cones in $\Tsm^\circ_\Gamma(\Delta_{\PP^2})$. There is another type of $1$-dimensional cone, consisting of maps where $v$ is sent to the top dimensional cone $\langle u_2,u_3\rangle$ of $\Delta_{\PP^2}$, and the ray $e_1$ passes through the $0$-cone of $\Delta_{\PP^2}$. In this case, we are forced to subdivide $G$ and its image along the vertex. This yields a bounded edge $e_b$, and is a distinct combinatorial type, whose associated cone is $1$-dimensional. Points of this cone are obtained by varying the length of $e_b$. Permuting the roles of $e_i$ we obtain three more $1$-dimensional cones in $\Tsm^\circ_\Gamma(\Delta_{\PP^2})$.

Finally, we describe the six $2$-dimensional cones in $\Tsm^\circ_\Gamma(\Delta_{\PP^2})$. Assume that $v\in G$ maps to the interior of the cone $\langle u_1,u_2\rangle$ in $\Delta_{\PP^2}$. The edges $e_1$ and $e_2$ are parallel to the rays $\langle u_1\rangle$ and $\langle u_2\rangle$ respectively. The final edge $e_3$ must intersect one of the cones $\langle u_i\rangle$ for $i = 1,2$. Assume that $e_3$ intersects $\langle u_1\rangle$. In order to have a map of polyhedral complexes, we must subdivide $G$ and its image at this intersection point. This subdivision changes the combinatorial type of this tropical stable map, and creates a new edge $e_b$ on $G$. There is a $2$-dimensional cone of maps of this combinatorial type, with coordinates given by the length of the bounded edge $e_b$ and the intersection point of $f(G)$ with $\langle u_1\rangle$. Similarly, if $e_3$ intersects the ray $\langle u_2\rangle$, we obtain a two dimensional cone in $\Tsm^\circ_\Gamma(\Delta_{\PP^2})$. These cones glue along the ray where $e_3$ passes through $0$.

We conclude by observing that the tropical and algebraic pictures are compatible. See Figure~\ref{eg: tropical-moduli}.

\begin{theorem}
The space of tropical stable maps of degree $1$ to $\Delta_{\PP^2}$ with contact order $1$ for each marked infinite ray is the compactified fan of the toric variety $\mathrm{Bl}_{3 \ \! \mathrm{pts}} \check{\PP}^2$.
\end{theorem}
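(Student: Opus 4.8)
The plan is to read off the fan $\Tsm^\circ_\Gamma(\Delta_{\PP^2})$ from the combinatorial bookkeeping carried out above, to recognize it as a toric blow-up of $\Delta_{\PP^2}$, and then to match it with the fan of $\mathrm{Bl}_{3\,\mathrm{pts}}\check{\PP}^2$. For the first point, the enumeration of combinatorial types identifies $|\Tsm^\circ_\Gamma(\Delta_{\PP^2})|$ with $N_\RR = \RR^2$ via the position of the root vertex, and exhibits exactly: the origin, the three rays $\langle u_i\rangle$, three further rays along which the swept-out infinite edge $e_i$ passes through the cone point, and six two-dimensional cones. The balancing relation $u_1+u_2+u_3 = 0$ shows that the swept ray inside the maximal cone $\langle u_j,u_k\rangle$ of $\Delta_{\PP^2}$ is $\langle u_j+u_k\rangle = \langle -u_i\rangle$, and that the two adjacent two-cones are the halves into which this ray divides $\langle u_j,u_k\rangle$. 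Hence, as a fan with integral structure, $\Tsm^\circ_\Gamma(\Delta_{\PP^2})$ is the complete fan in $N_\RR$ with rays $\pm u_1,\pm u_2,\pm u_3$, and the tautological map $\Tsm^\circ_\Gamma(\Delta_{\PP^2})\to\Delta_{\PP^2}$ is the star subdivision of $\Delta_{\PP^2}$ at each of its three maximal cones.

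The star subdivision of the fan of a smooth complete toric surface at a maximal cone is the toric blow-up at the corresponding fixed point, so $\Tsm^\circ_\Gamma(\Delta_{\PP^2})$ is the fan of the blow-up of the toric variety of $\Delta_{\PP^2}$ at its three torus-fixed points. To see that this toric variety is $\mathrm{Bl}_{3\,\mathrm{pts}}\check{\PP}^2$ I would use the toroidal birational morphism $\gamma\colon\Lsm_\Gamma(\PP^2)\to\check{\PP}^2$ from the algebraic discussion above: it restricts to an isomorphism $\Lsm^\circ_\Gamma(\PP^2)\xrightarrow{\sim}\check T$ onto the dense torus of $\check{\PP}^2$, while Proposition~\ref{prop: lsm-m0n} gives $\Lsm^\circ_\Gamma(\PP^2)\cong T$; identifying the cocharacter lattice of $\check T$ with $N$ through $\gamma$, the fan of $\check{\PP}^2$ becomes a complete fan with three rays summing to zero, hence (up to the evident sign automorphism) $\Delta_{\PP^2}$ itself. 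Equivalently, and perhaps more cleanly, by Theorem~\ref{toytheorem} we have $\Lsm_\Gamma(\PP^2)\cong\mathrm{Bl}_{3\,\mathrm{pts}}\check{\PP}^2$; under this isomorphism the divisorial log structure on $\Lsm_\Gamma(\PP^2)$ corresponds to the toric boundary log structure — this is visible from the stratification and characteristic monoids $0,\NN,\NN^2$ tabulated above, and also follows from Theorem~\ref{thm: trop-comp} — so the cone complex of the toroidal embedding $\Lsm_\Gamma(\PP^2)$ \emph{is} the fan of $\mathrm{Bl}_{3\,\mathrm{pts}}\check{\PP}^2$.

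It remains to pass to the canonical compactification: $\Tsm_\Gamma(\Delta_{\PP^2})$ is by definition the canonical compactification of $\Tsm^\circ_\Gamma(\Delta_{\PP^2})$, so the previous step already identifies it with the compactified fan of $\mathrm{Bl}_{3\,\mathrm{pts}}\check{\PP}^2$; alternatively, Theorem~\ref{thm: skeleton} identifies $\Tsm_\Gamma(\Delta_{\PP^2})$ with the skeleton $\overline\fS$ of $\Lsm_\Gamma^{\an}(\PP^2)$, which for a smooth projective toric variety with its toric boundary log structure is precisely the compactified fan. The step I expect to be the main obstacle is the identification of $\Tsm^\circ_\Gamma(\Delta_{\PP^2})\to\Delta_{\PP^2}$ with the refinement dual to $\gamma$: one must check that the tropical and algebraic cone-complex structures (and their integral structures) coincide, not merely that the two stratifications have matching cells and incidences. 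This is resolved either by invoking Theorem~\ref{thm: skeleton}, or by the stratum-by-stratum comparison indicated in this section, matching each locally closed stratum of $\Lsm_\Gamma(\PP^2)$ with the cone of the corresponding combinatorial type of tropical map.
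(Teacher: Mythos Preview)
Your proposal is correct and follows the same approach as the paper: the paper does not give a separate proof of this theorem but states it as the conclusion of the preceding ``(Tropical side)'' enumeration of cones, and your writeup makes that enumeration precise by identifying the three extra rays as $\langle -u_i\rangle=\langle u_j+u_k\rangle$ and recognizing the resulting fan as the star subdivision of $\Delta_{\PP^2}$ at its three maximal cones. One small caution: your alternative route via Theorem~\ref{thm: trop-comp} is a forward reference (that theorem is proved only in Section~4.3, after this example), so keep it as motivation rather than as part of the argument; your direct combinatorial identification together with Theorem~\ref{toytheorem} already suffices.
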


\begin{figure}[h!]
\begin{tikzpicture}

\fill[white!70!blue, path fading=north] (0,0)--(0,2)--(45:2.2) -- cycle;
\fill[white!70!blue, path fading=east] (0,0)--(45:2.2)--(2,0) -- cycle;
\fill[white!70!blue, path fading=south] (0,0)--(2,0)--(0,-2) -- cycle;
\fill[white!70!blue, path fading=south] (0,0)--(0,-2)--(45:-2.2) -- cycle;
\fill[white!70!blue, path fading=west] (0,0)--(45:-2.2)--(-2,0) -- cycle;
\fill[white!70!blue, path fading=north] (0,0)--(-2,0)--(0,2) -- cycle;

\begin{scope}[shift = {(5,0)}]
\fill[pink, path fading=north] (0,0)--(0,2)--(2,2)--(2,0) -- cycle;
\fill[pink, path fading=south] (0,0)--(2,0)--(2,-1.414)--(-1.414,-1.414) -- cycle;
\fill[pink, path fading=west] (0,0)--(0,2)--(-1.414,2)--(-1.414,-1.414) -- cycle;
\end{scope}

\draw [ball color = black!40!blue] (0,0) circle (0.5mm);

\draw[->,black!40!blue] (0,0)--(0,2);
\draw[->,black!40!blue] (0,0)--(2,0);
\draw[->,black!40!blue] (0,0)--(0,-2);
\draw[->,black!40!blue] (0,0)--(-2,0);
\draw[->,black!40!blue] (0,0)--(45:2.2);
\draw[->,black!40!blue] (0,0)--(45:-2.2);

\draw[->,red] (5,0)--(5,2);
\draw[->,red] (5,0)--(7,0);
\draw[->,red] (5,0)--(3.686,-1.414);
\draw [ball color=black] (1,1.5) circle (0.5mm);

\node at (1,1.75) {\tiny $P$};

\draw [->,
line join=round,
decorate, decoration={
    zigzag,
    segment length=4,
    amplitude=.8,post=lineto,
    post length=2pt
}]  (1.2,1.45) -- (3.5,1);

\draw (7.5,1.5)--(6,1.5)--(6,3);
\draw (6,1.5)--(4.5,0);

\draw [ball color=black] (6,1.5) circle (0.5mm);
\draw [ball color=black] (5,0.5) circle (0.5mm);
\draw [ball color=red] (5,0) circle (0.5mm);
\end{tikzpicture}
\caption{On the left is depicted the open moduli space $\Tsm_\Gamma^\circ(\Delta_{\PP^2})$ of tropical stable maps of degree $1$ to $\Delta_{\PP^2}$. On the right is depicted the map parametrized by the point in  $P\in\Tsm_\Gamma^\circ(\Delta_{\PP^2})$. The coordinates of $P$ determine the coordinates of the root vertex on the right.}
\label{eg: tropical-moduli}
\end{figure}
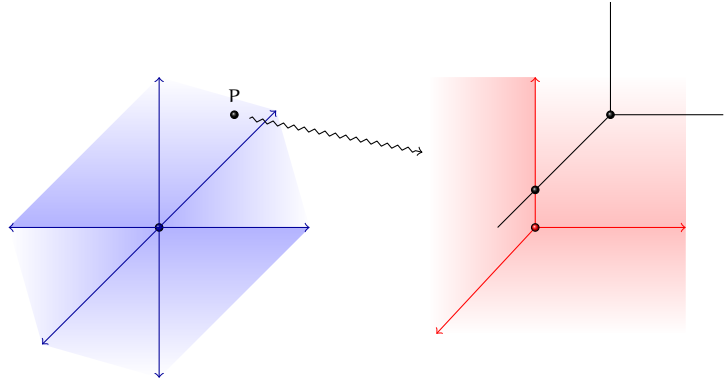

\subsection{A fan structure on the space of tropical maps} 
Throughout this section we assume that our logarithmic stable maps have at least one marked point with trivial contact order, i.e. $m\geq 1$. We distinguish such a marking $p_1$ and let $ev_1$ be the associated evaluation morphism. Fix the dimension of $X$ to be $r$. The tropical moduli space $M_{0,n}^\trop$ has the natural structure of a balanced fan in a vector space, by expressing it as a quotient of the Speyer--Sturmfels tropical Grassmannian of planes~\cite{SS04a}. In~\cite{GKM07}, Gathmann, Kerber, and Markwig use this to describe fan structures on spaces $\mathscr M^{\mathrm{lab}}_{0,n,\trop}(|\Delta|,\Gamma)$ of parametrized rational tropical curves in $|\Delta|\cong \RR^r$ with prescribed unbounded edge directions given by the contact order $c$. See~\cite[Definition 4.1]{GKM07}.

\begin{proposition}\label{prop: trop-embedding}
There is an embedding of the polyhedral complex $\Tsm^\circ_\Gamma(\Delta)$
\begin{equation}\label{trop-embed-eqn}
\jmath: \Tsm^\circ_\Gamma(\Delta)\hookrightarrow \RR^k,
\end{equation}
as a fan, for $k = \dim \Delta+\binom{n+m}{2}-n-m$. 
\end{proposition}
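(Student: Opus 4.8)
The plan is to realize $\Tsm^\circ_\Gamma(\Delta)$ as a rational subdivision of the space of labelled parametrized rational tropical curves in $|\Delta|\cong\RR^r$ of Gathmann--Kerber--Markwig, and then to invoke their identification of that space with $M_{0,n+m}^\trop\times\RR^r$. Write $r=\dim\Delta$ and recall that, since $\Delta$ is complete, $|\Delta|=N_\RR\cong\RR^r$. First I would construct the forgetful map
\[
\rho\colon \Tsm^\circ_\Gamma(\Delta)\longrightarrow \mathscr M^{\mathrm{lab}}_{0,n+m,\trop}(|\Delta|,\Gamma)
\]
that sends a tropical stable map $[f\colon G\to\overline\Delta]$ to the underlying parametrized tropical curve, i.e.\ that forgets which divalent vertices of the source have been marked in order to satisfy condition (2) of Definition~\ref{def: trop-stable-map}. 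On underlying sets $\rho$ is a bijection, since straightening those divalent vertices changes neither the continuous map $G\to\overline\Delta$ nor the metric on $|G|$; on each cone $\sigma_\Theta$ it is integral and linear, being induced by the relations $v_2-v_1=\ell(e)\,c_e$. Because the minimal subdivision of Definition~\ref{def: trop-stable-map} is \emph{unique}, each cone $\sigma_\Theta$ of $\Tsm^\circ_\Gamma(\Delta)$ is carried isomorphically onto a rational subcone of a cone of the target, and these subcones are glued along faces as in Proposition~\ref{prop: types}; thus $\rho$ exhibits $\Tsm^\circ_\Gamma(\Delta)$ as a rational subdivision of $\mathscr M^{\mathrm{lab}}_{0,n+m,\trop}(|\Delta|,\Gamma)$ --- concretely, the common refinement of the cone structure on $\mathscr M^{\mathrm{lab}}_{0,n+m,\trop}(|\Delta|,\Gamma)$ with the pullbacks of the fan $\Delta$ under the vertex-position maps --- so it is again a fan.

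Second, I would recall from \cite[Section 4]{GKM07} that, using $m\geq 1$ to single out $p_1$ and the fact that the leaf directions are fixed by $\Gamma$, the assignment of (the abstract tropical source curve, the position of the vertex adjacent to $p_1$) to a parametrized tropical curve is an isomorphism of cone complexes
\[
\mathscr M^{\mathrm{lab}}_{0,n+m,\trop}(|\Delta|,\Gamma)\;\xrightarrow{\ \sim\ }\;M_{0,n+m}^\trop\times\RR^r .
\]
The inverse is exactly the reconstruction used in Step III of the proof of Theorem~\ref{thm: skeleton}: the source is a tree, so propagating the relation $v=u+w_e\cdot e$ inward along the unique paths from $p_1$ determines the position of every vertex from that of the vertex at $p_1$, while balancing, propagated inward from the leaves, pins down the weighted direction of every bounded edge in terms of the combinatorial type alone. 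Composing $\rho$ with this isomorphism, with the standard embedding $M_{0,n+m}^\trop\hookrightarrow\RR^{\binom{n+m}{2}}/\RR^{n+m}\cong\RR^{\binom{n+m}{2}-n-m}$ arising from the tropical Grassmannian of planes \cite{SS04a}, and with $\RR^r=\RR^{\dim\Delta}$, yields a map
\[
\jmath\colon\Tsm^\circ_\Gamma(\Delta)\hookrightarrow \RR^{\dim\Delta+\binom{n+m}{2}-n-m}=\RR^k .
\]
Since $\rho$ is a set-theoretic bijection and the two subsequent maps are embeddings, $\jmath$ is injective; it is integral and linear on each cone by construction; and its image is the closed set $M_{0,n+m}^\trop\times\RR^r$, so $\jmath$ is a homeomorphism onto a closed subset, and it realizes the cone complex $\Tsm^\circ_\Gamma(\Delta)$ as a rational polyhedral fan in $\RR^k$.

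I expect the only genuine work to be in the first step: checking that the minimal-subdivision requirement of Definition~\ref{def: trop-stable-map} really does refine the Gathmann--Kerber--Markwig fan structure --- that is, that imposing a vertex of the source at each cone wall of $\Delta$ cuts each of their cones into finitely many rational subcones, consistently across all combinatorial types --- and, relatedly, re-deriving the identification $\mathscr M^{\mathrm{lab}}_{0,n+m,\trop}(|\Delta|,\Gamma)\cong M_{0,n+m}^\trop\times\RR^r$ in a way that handles non-trivalent combinatorial types, where one must verify that balancing still determines all edge directions and weights. Given these, linearity, integrality, injectivity and the dimension count $k=\dim\Delta+\binom{n+m}{2}-n-m$ are formal, the two summands coming respectively from $ev_1^\trop$ and from the ambient space of $M_{0,n+m}^\trop$.
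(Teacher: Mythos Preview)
Your proposal is correct and follows essentially the same approach as the paper: both arguments construct the forgetful map to the Gathmann--Kerber--Markwig moduli space $\mathscr M^{\mathrm{lab}}_{0,n+m,\trop}(|\Delta|,\Gamma)$ by straightening the $2$-valent vertices imposed by condition (2) of Definition~\ref{def: trop-stable-map}, observe that this map is a set-theoretic bijection and a refinement of fans, and then invoke \cite[Proposition~4.7]{GKM07} for the embedding of the target into $\RR^k$. Your write-up is in fact more explicit than the paper's in spelling out the identification $\mathscr M^{\mathrm{lab}}_{0,n+m,\trop}(|\Delta|,\Gamma)\cong M_{0,n+m}^\trop\times\RR^r$; note, though, that this identification from \cite{GKM07} does not actually require a contracted leaf, so your invocation of $m\geq 1$ here is unnecessary (it is needed later in the section, but not for this proposition).
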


\begin{proof}
Let $[f:G\to \overline \Delta]$ be a tropical stable map representing a point of $\Tsm^\circ_\Gamma(\Delta)$. Deleting the infinite points of $G$ that have a non-zero contact order, we obtain a map from a non-compact graph $\widetilde G\to \Delta$. After straightening all $2$-valent vertices of $G$, and forgetting the fan structure on $|\Delta|$ we a parametrized tropical curve $\widehat G\to |\Delta|$, as defined by Gathmann, Kerber, and Markwig~\cite[Definition 4.1]{GKM07}. Conversely, given a parametrized tropical curve $|\Delta|$, we may uniquely compactify $\widehat G$, and minimally subdivide to obtain a tropical stable map. This gives us a forgetful morphism
\[
\Phi:\Tsm^\circ_\Gamma(\Delta)\to \mathscr M^{\mathrm{lab}}_{0,n,\trop}(|\Delta|,\Gamma),
\]
that is a bijection on underlying sets. Moreover, $\widehat G\to |\Delta|$ and $G\to \Delta$ differ only by a subdivision, and compactification of infinite edges, so the map $\Phi$ is a refinement of fans. The proposition now follows from the corresponding result for the space $\mathscr M^{\mathrm{lab}}_{0,n,\trop}(|\Delta|,\Gamma)$, see~\cite[Proposition 4.7]{GKM07}.
\end{proof}

Let $\cT$ denote the fan structure on the resulting embedding of the cone complex $\Tsm^\circ_\Gamma(\Delta)$. The algebraic moduli space $M_{0,n+m}$ is very affine and embeds into a torus. We use Kapranov's embedding~\cite{Kap93}. Given a point of $M_{0,n+m}$ one can associate the $2$-plane in $\CC^{n+m}$ spanned by the $2\times (n+m)$ matrix describing $p$ in any coordinates. This association is well defined up to rescaling the coordinates of $\CC^{n+m}$, and gives an embedding 
\[
M_{0,n+m}\hookrightarrow G(2,n+m)\!\sslash\!T',
\] 
where $T'$ is the dense torus of $\CC^{n+m}$. By embedding $G(2,n+m)$ by the Pl\"ucker map
\[
\iota: M_{0,n+m}\hookrightarrow \PP^{\binom{n+m}{2}-1}\!\sslash\!T'
\] 
with image contained in the dense torus $T''$ of the Chow quotient $\PP^{\binom{n+m}{2}-1}\!\sslash\!T'$.\\

\noindent
Let $\mu: \Lsm^\circ_\Gamma(X)\to M_{0,n+m}$ be the morphism sending a map to its marked source curve. Recall by Proposition~\ref{prop: lsm-m0n} that $\mu\times ev_1$ is an isomorphism. 

\begin{proposition}\label{prop: a-gross-proposition}
The tropicalization of $\Lsm^\circ_\Gamma(X)$ under the embedding $\iota\times {\bm{I\!d}}$ coincides set theoretically with the image of $\Tsm^\circ_\Gamma(\Delta)$ in the embedding (\ref{trop-embed-eqn}) above.
\end{proposition}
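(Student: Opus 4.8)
The strategy is to show that, under the evident identification of ambient tori, both the tropicalization of $\Lsm^\circ_\Gamma(X)$ via $\iota\times\bm{I\!d}$ and the image $\jmath\big(\Tsm^\circ_\Gamma(\Delta)\big)$ equal the product $|M_{0,n+m}^{\trop}|\times N_\RR$ inside $\RR^k=\RR^{\binom{n+m}{2}-n-m}\times\RR^{\dim\Delta}$; the proposition follows. (As in the proof of Proposition~\ref{prop: irred}, we may assume $c$ is torically transverse after passing to a toric modification $X_c\to X$, which is an isomorphism on the loci of trivial logarithmic structure and is compatible with the tropicalizations in question, so that $\mu\times ev_1$ is an isomorphism by Proposition~\ref{prop: lsm-m0n}.)

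For the left-hand side I would begin from Proposition~\ref{prop: lsm-m0n}: the map $\mu\times ev_1$ identifies $\Lsm^\circ_\Gamma(X)$ with $M_{0,n+m}\times T$, and under this identification $\iota\times\bm{I\!d}$ is exactly the product of Kapranov's embedding $\iota\colon M_{0,n+m}\hookrightarrow T''$~\cite{Kap93} with the identity $T\hookrightarrow T$. Tropicalization commutes with products: a $\spec K$-point of $Y_1\times Y_2$ is a pair of $\spec K$-points over the same $K$, and $\trop\colon Y_i^{\an}\to\trop(Y_i)$ is surjective, so $\trop\big(\Lsm^\circ_\Gamma(X)\big)=\trop\big(\iota(M_{0,n+m})\big)\times\trop(T)$. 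Now $\trop(T)=N_\RR$, and by the Speyer--Sturmfels description of the tropical Grassmannian of lines as a space of trees~\cite{SS04a} the first factor is the support $|M_{0,n+m}^{\trop}|$ of the space of $(n+m)$-marked rational tropical curves, the tropical Plücker coordinates being the pairwise tree distances.

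For the right-hand side, recall that the proof of Proposition~\ref{prop: trop-embedding} realizes the forgetful map $\Phi\colon\Tsm^\circ_\Gamma(\Delta)\to\mathscr M^{\mathrm{lab}}_{0,n,\trop}(|\Delta|,\Gamma)$ as a bijective refinement of fans. Hence $\jmath\big(\Tsm^\circ_\Gamma(\Delta)\big)$ coincides set-theoretically with the image of $\mathscr M^{\mathrm{lab}}_{0,n,\trop}(|\Delta|,\Gamma)$ under the Gathmann--Kerber--Markwig embedding, and by~\cite[Proposition 4.7]{GKM07} that image is again $|M_{0,n+m}^{\trop}|\times N_\RR$, the first factor embedded by pairwise tree distances and the second recording the position of the marked end $p_1$ of trivial contact order.

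What remains — and what I expect to be the only substantive point — is to check that the two copies of $|M_{0,n+m}^{\trop}|\times N_\RR$ so produced are literally the same subset of $\RR^k$, i.e.\ that the coordinate systems agree. On the first factor this amounts to the statement that the algebraic forgetful map $\mu$ tropicalizes to the map sending a tropical stable map to its stabilized source curve, combined with the identification $\trop G(2,n+m)/(\text{lineality})=M_{0,n+m}^{\trop}$ used above. On the second factor one must verify that $ev_1\colon\Lsm^\circ_\Gamma(X)\to T$ tropicalizes to the tropical evaluation $ev_1^{\trop}\colon\Tsm^\circ_\Gamma(\Delta)\to N_\RR$; this follows from the pointwise computation in the proof of Theorem~\ref{thm: pointwise-trop} (or directly from the explicit coordinates of Proposition~\ref{prop: lsm-m0n}), using functoriality of tropicalization for morphisms of tori. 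Assembling these identifications gives the asserted set-theoretic equality; everything except the coordinate compatibility is bookkeeping built on Propositions~\ref{prop: lsm-m0n} and~\ref{prop: trop-embedding} together with the standard structure theory of $M_{0,n+m}$, its tropicalization, and Gathmann--Kerber--Markwig's tropical moduli of maps.
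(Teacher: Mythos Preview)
Your proposal is correct and follows essentially the same approach as the paper: both compute the tropicalization of $\Lsm^\circ_\Gamma(X)$ via the product decomposition $M_{0,n+m}\times T$ from Proposition~\ref{prop: lsm-m0n}, identify the first factor's tropicalization with $|M_{0,n+m}^{\trop}|$ (you via Speyer--Sturmfels, the paper via Tevelev), and then invoke Proposition~\ref{prop: trop-embedding} together with~\cite[Proposition 4.7]{GKM07} to match this with $\jmath\big(\Tsm^\circ_\Gamma(\Delta)\big)$. Your version is more detailed---in particular the reduction to torically transverse contact order and the coordinate-compatibility check are points the paper leaves implicit---but the underlying argument is the same.
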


\begin{proof}
By~\cite[Section 5]{Tev07}, the tropicalization of $M_{0,n+m}$ in this embedding is $M_{0,n}^{\trop}$, so the tropicalization of $\Lsm^\circ_\Gamma(X)$ is $M_{0,n+m}\times |\Delta|$. The result now follows from Proposition~\ref{prop: trop-embedding} above and~\cite[Proposition 4.7]{GKM07}. 
\end{proof}

The construction above embeds $M_{0,n}$ in the dense torus $T''$ of a toric variety. We deduce Theorem~\ref{thm: trop-comp} by using Tevelev's tropical compactification of $\overline M_{0,n}$ as a starting point. We consider $M_{0,n}^\trop$ as embedded in the vector space $N''_\RR$, where $N''$ is the cocharacter lattice of the torus $T''$. Let $\cF_n$ denote the resulting fan structure on $M^\trop_{0,n}$. The following result is due to Tevelev~\cite[Section 5]{Tev07}, building on earlier ideas of Kapranov~\cite{Kap93}. 

\begin{theorem}
The closure of the image of $M_{0,n}$ under the composite
\[
M_{0,n}\hookrightarrow T''\hookrightarrow X(\cF_n)
\]
is isomorphic to $\overline M_{0,n}$. 
\end{theorem}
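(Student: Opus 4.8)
The plan is to recognize the closure in question as a \emph{tropical compactification} in the sense of Tevelev~\cite{Tev07} and then to argue that, because $M_{0,n}$ is \emph{sch\"on} in $T''$, this compactification is forced to coincide with the Deligne--Mumford space. Throughout I would use the identification of $\trop(M_{0,n})$ under the Pl\"ucker/Kapranov embedding with the space of phylogenetic trees from~\cite{SS04a,Kap93}: a cone $\sigma_T\in\cF_n$ corresponds to a combinatorial type of $n$-marked stable rational curve with dual tree $T$, its relative interior recording the lengths of the bounded edges of $T$. In particular the rays of $\cF_n$ are the boundary divisors of $\overline M_{0,n}$ (splits of $\{1,\dots,n\}$) and the maximal cones are the trivalent trees, so the combinatorics on the two sides already match; the content of the theorem is that the scheme structures match too.

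\textbf{Step 1: $M_{0,n}$ is sch\"on.} The technical heart is to show that every initial degeneration $\mathrm{in}_w(M_{0,n})$, for $w\in\trop(M_{0,n})$, is smooth. I would prove this by induction on $n$, the cases $n\le 4$ being immediate (for instance $M_{0,4}=\PP^1\setminus\{0,1,\infty\}$, whose tropical compactification in the space-of-trees fan recovers $\PP^1=\overline M_{0,4}$). For $w$ in the relative interior of $\sigma_T$ one expects, by the recursive structure of stable curves, that the stratum of the hypothetical compactification lying over $w$ is the product $\prod_{v\in V(T)} M_{0,\mathrm{val}(v)}$ and that $\mathrm{in}_w(M_{0,n})$ is a torus times this product; smoothness then follows from the inductive hypothesis applied to each factor. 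Concretely this can be verified using cross-ratio coordinates, or Kapranov's presentation of $\overline M_{0,n}$ as an iterated blow-up of $\PP^{n-3}$ along linear subspaces, to compute the relevant initial ideals.

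\textbf{Step 2: the closure is a smooth proper compactification, isomorphic to $\overline M_{0,n}$.} Since $\cF_n$ is supported exactly on $\trop(M_{0,n})$ and $M_{0,n}$ is sch\"on, Tevelev's results give that $\widehat M_{0,n}:=\overline{M_{0,n}}\subset X(\cF_n)$ is proper (even though $X(\cF_n)$ itself is not complete), smooth, with boundary a divisor having combinatorial normal crossings, and with strata $\widehat M_{0,n}\cap O_\sigma$ indexed by $\sigma\in\cF_n$ of codimension $\dim\sigma$, each itself a sch\"on compactification of the product attached to $\sigma$. To identify $\widehat M_{0,n}$ with the Deligne--Mumford space I would produce the morphism in the direction $\overline M_{0,n}\to\widehat M_{0,n}$: since $\overline M_{0,n}$ is smooth with normal-crossings boundary, it suffices to check that the rational map $\overline M_{0,n}\dashrightarrow X(\cF_n)$ is regular in codimension one, which is the modular statement that a stable curve over a DVR tropicalizes to its dual metric tree --- this pins down the orders of vanishing of the pullbacks of characters of $T''$ along each boundary divisor and shows they extend. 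The resulting morphism is proper and birational, and comparing the two stratifications (both governed by $\cF_n$ by the recursion on stable curves) shows it is a bijection on points; since $\widehat M_{0,n}$ is normal, it is an isomorphism.

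\textbf{Main obstacle.} The crux is Step~1: correctly identifying the initial degenerations of $M_{0,n}$ and recognizing their smoothness through the inductive product structure, which requires careful bookkeeping between Kapranov's coordinates and the tree combinatorics. A secondary but necessary point is to confirm that the fan $\cF_n$ cut out by the Pl\"ucker embedding used here is \emph{precisely} the space-of-trees fan on $M_{0,n}^\trop$ --- not a coarsening or a refinement --- so that the codimensions and strata counts on the two sides line up; this is exactly the content of~\cite{SS04a} combined with Tevelev's Section 5.
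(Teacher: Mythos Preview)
The paper does not actually prove this statement: it attributes the result to Tevelev~\cite[Section 5]{Tev07} (building on Kapranov~\cite{Kap93}) and simply states it without argument. So there is nothing to compare your proposal against in the paper itself.

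That said, your proposal is a faithful sketch of Tevelev's own argument: verify that $M_{0,n}$ is sch\"on in $T''$ (equivalently, that all initial degenerations are smooth), observe that $\cF_n$ is supported exactly on $\trop(M_{0,n})$ and coincides with the space-of-trees fan via~\cite{SS04a}, and then invoke the general machinery of tropical compactifications to conclude that the closure is smooth and proper with the correct boundary stratification, hence isomorphic to $\overline M_{0,n}$. Your identification of the main obstacle --- controlling the initial degenerations via the recursive product structure on strata --- is also where Tevelev's argument (and the closely related work of Gibney--Maclagan) does the real work. In short, your strategy is correct and is essentially the approach of the cited reference; the paper under review simply takes this as input.
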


The proof of Theorem~\ref{thm: trop-comp} requires the use of the \textit{Artin fan} associated to a logarithmic scheme $X$, which we briefly recall. See~\cite{ACMUW,ACMW, AW,U-thesis} for details. An Artin fan is a logarithmic algebraic stack that is logarithmically \'etale over a point.  By work of Olsson, a logarithmic structure on a scheme $X$ defines a tautological map $\underline X\to \mathrm{LOG}$, where $\mathrm{LOG}$ is Olsson's stack of logarithmic structures~\cite{Ols03}. Maps to $\mathrm{LOG}$ parametrize logarithmic structures on the source. In~\cite{AW}, Abramovich and Wise show that given a logarithmically smooth scheme $X$, there is an initial factorization of the tautological map $X\to \mathrm{LOG}$ as
\[
\begin{tikzcd}
X \arrow{rr} \arrow{dr} & & \mathrm{LOG} \\
& \mathpzc A_X \arrow{ur} &
\end{tikzcd}
\]
such that the map $\mathpzc{A}_X\to \mathrm{LOG}$ is \'etale, representable, and strict. Loosely speaking, $\mathpzc{A}_X$ is the image of $X$ in the stack $\mathrm{LOG}$, and thus picks out the part of the stack that is  ``relevant'' to $X$. 

\subsection{Proof of Theorem~\ref{thm: trop-comp}} Consider the forgetful morphism
\[
\mu \times ev_1: \Lsm_\Gamma(X)\to \overline M_{0,n+m}\times X,
\]
given by stabilizing the source curve in the first factor. In order to prove the result we need to show that this morphism is a toroidal modification (i.e. a logarithmically etale modification). We will do so by appealing to Abramovich and Wise's characterization of such morphisms by pullbacks of morphisms between Artin fans.

This morphism is clearly proper. It is birational by Proposition~\ref{prop: a-gross-proposition} above. By applying~\cite[Theorem 1.2.1]{ACP}, the fan of the toroidal embedding $\overline M_{0,n+m}\times X$ is given by $M^{\trop}_{0,n+m}\times \Delta$. Similarly, by Theorem~\ref{thm: skeleton}, the fan of the toroidal embedding $\Lsm_\Gamma(X)$ is given by $\Tsm^\circ_\Gamma(\Delta)$. The map of fans induced by $\mu\times ev_1$
\[
\Tsm_\Gamma^\circ(\Delta)\to M_{0,n+m}^{\trop}\times \Delta
\]
is a bijection on underlying sets. A cone $(\sigma,\delta)\in M_{0,n+m}^{\trop}\times \Delta$ parametrizes the stabilized tropical source curve and the position of the distinguished root vertex. Given a point $([G],v)\in(\sigma,\delta)$, we subdivide $G$ at its intersection points with the cones of $\Delta$, without changing the lengths of the bounded edges, and uniquely obtain a point of $\Tsm_\Gamma^\circ(\Delta)$. It follows that $\Tsm_\Gamma^\circ(\Delta)\to M_{0,n+m}^{\trop}\times \Delta$ is a refinement of fans. Thus, in local toric charts, $\mu\times ev_1$ is a toric modification, so globally it is a logarithmically \'etale modification. By~\cite[Corollary 2.5.6]{AW}, this logarithmically \'etale modification is determined by the Cartesian square
\[
\begin{tikzcd}
\Lsm_\Gamma(X) \arrow{d}\arrow{r} & \overline M_{0,n+m}\times X \arrow{d}\\
\mathpzc{A}_L \arrow{r} & \mathpzc{A}_M,
\end{tikzcd}
\]
where $\mathpzc{A}_L$ and $\mathpzc{A}_M$ are Artin fans. It follows from Theorem~\ref{thm: skeleton} and~\cite[Theorem V.1.1]{U-thesis}, there is a canonical isomorphism
\[
\mathpzc A_{L}^\beth\xrightarrow{\sim} \Tsm_\Gamma(\Delta),
\]
so $\mathpzc A_L$ is the Artin fan of $\Lsm_\Gamma(X)$. All logarithmic structures are defined on the Zariski site, so the Artin fan construction is functorial~\cite[Section 10.3]{Kato94}. By its construction, the quotient of the toric variety $X(\cT)$ (resp. $X(\cF_{n+m}\times \Delta)$) by its dense torus is canonically identified with $\mathpzc A_L$ (resp. $\mathpzc A_M$). The map of fans 
\[
\cT \to \cF_{n+m} \times \Delta
\] 
gives rise to a toric modification $X(\cT)$ of the toric variety $X(\cF_{n+m})\times X(\Delta)$. Let $Y$ be the proper transform of $\overline M_{0,n+m}\times X(\Delta)$ via this toric modification. Since $\overline M_{0,n+m}$ is transverse to all toric strata of $X(\cF_{n+m})$, the inclusion $Y\hookrightarrow X(\cT)$ is a strict morphism, so there is a natural map $Y\to \mathpzc A_L$.
By universal property of the pullback, we have a map $\varphi: \Lsm_\Gamma(X)\to Y$ that is proper and birational. Again, since $Y$ is transverse to all toric strata $\varphi$ is strict. Both $\Lsm_\Gamma(X)$ and $Y$ are logarithmically \'etale over $\mathpzc{A}_M$, so the map $\varphi$ is \'etale, and hence an isomorphism by Zariski's main theorem. The fan $\cT$ furnishes the fan structure $\Sigma_\Gamma$ on $\trop(\Lsm^\circ_\Gamma(X))$ as claimed. 
\qed

\begin{remark}\label{rem: assumption-on-m}
The assumption of $m\geq 1$ is required to realize the space $\Lsm^\circ_\Gamma(X)$ as the product $M_{0,n+m}\times T$. When $m = 0$, one does not have an evaluation morphism to $T$. Nonetheless the fibers of the morphism
\[
\Lsm_\Gamma^\circ(X)\to M_{0,n}
\]
are $T$-torsors. We keep this assumption to avoid discussing tropicalizations of torus torsors and compactifications thereof. See also~\cite[Section 3]{CMR14b}.
\end{remark}

\section{Gromov--Witten invariants}

Given a linear subspace $L_i$ of $N_\QQ$, let $\mathbb G(L_i)$ be the associated subtorus. Translates of $\mathbb G(L_i)$ are parametrized by the quotient torus $\mathbb G(N/L_i)$. Observe that $\trop(\mathbb G(L_i)) = L_i\otimes \RR$ and by functoriality of tropicalization, the tropicalization of a translate $t_0\cdot \mathbb G(L_i)$ is the affine space obtained by translating $L_i$ by $\trop(t_0)\in \Hom(M,\RR)$. Let $Z_i$ be the closure of $\mathbb G(L_i)$. The correspondence theorem that we prove in this section will count rational curves in $X$ with given incidence to closures of linear subspaces. In order to use evaluation morphisms, we will need a compact space of $\mathbb G(L_i)$-torus orbits. In fact, we may choose any toric compactification of the quotient torus $T/\mathbb G(L_i)$ to which $X$ maps. The natural choice seems to be the Chow quotient, see Remark~\ref{whyChow} below.

\subsection{Counting curves in $X$} Given a marked point with trivial contact order, we have a map $ev: \Lsm_\Gamma(X)\to X$. By composing this with the Chow quotient map $X\to  X\!\sslash\!\mathbb G(L_i)$ we obtain, for each point $p_i$ and subspace $L_i$, evaluation maps
\[
{ev}_{L_i}: X\to X_{L_i}: = X\!\sslash\!\mathbb G(L_i).
\]
Putting these evaluation maps together we have
\[
{Ev}_{\mathscr L}:\Lsm_\Gamma(X)\to \prod_{i=1}^m X_{L_i}. 
\]
The logarithmic Gromov--Witten invariant is given by
\[
\langle Z_1,\ldots, Z_m\rangle^{X}_{\Gamma} = \deg(Ev_{\mathscr L}^\star(pt))
\] 

\begin{remark}\label{whyChow}
Note that since the map $\Lsm_\Gamma(X)\to \mathfrak M_{0,n+m}$ is logarithmically unobstructed, the logarithmic Gromov--Witten invariant is enumerative. In particular, the invariant above counts maps from $\PP^1$ to $X$ such that the marked point $p_i$ maps to the locally closed subscheme $\mathbb G(L_i)\subset X$. This justifies our use of the Chow quotient as an evaluation space.
\end{remark}

Let $ev_i:\Lsm_\Gamma(X)\to X$ be a logarithmic evaluation morphism at a marking $p_i$ carrying trivial contact order.

\begin{proposition}\label{prop: trop-analytic}
There following diagram commutes:
\[
\begin{tikzcd}
\Lsm^{\an}_\Gamma(X) \arrow{r}{ev_i^{\an}} \arrow{d}[swap]{\trop} & X^{\an} \arrow{d}{\trop} \\
\Tsm_\Gamma(\Delta) \arrow[swap]{r}{ev_i^{\trop}} & \overline\Delta.
\end{tikzcd}
\]

\end{proposition}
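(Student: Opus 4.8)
The plan is to reduce the claimed commutativity to the functoriality of tropicalization with respect to the two morphisms in sight: the analytic evaluation map $ev_i^{\an}$ and the structure maps of the relevant toroidal/toric schemes. First I would recall that, by Theorem~\ref{thm: skeleton}, the map $\trop$ on the left factors through the deformation retraction $\bm p$ onto the skeleton $\overline{\fS}$, and by construction the tropicalization $\trop(x)$ of a point $x\in \Lsm^{\an}_\Gamma(X)$ is computed by passing to a one-parameter family over a valuation ring $R$ (after a ramified base change), taking a semistable model $\mathscr C$ of the source curve, building the metrized dual graph $G_{\mathscr C}$, and recording the map $f_{\trop}$ as in Section~\ref{sec: pointwise-trop}. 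So it suffices to chase a single such family.

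The key computation is then the following. Given $x\in \Lsm^{\an}_\Gamma(X)$ represented by a family $f:\mathscr C\to X$ over $\spec(R)$, the image $ev_i^{\an}(x)$ is represented by the section $p_i$ of $\mathscr C\to \spec(R)$, regarded as an $R$-point of $X$, i.e. by the composite $\spec(R)\xrightarrow{p_i}\mathscr C\xrightarrow{f} X$. On the one hand, $\trop(ev_i^{\an}(x))\in\overline\Delta$ is obtained by restricting the valuation attached to this $R$-point to characters of the torus (and their monomial extensions on the toric strata). On the other hand, $ev_i^{\trop}(\trop(x))$ is, by definition of the tropical evaluation morphism in Section~\ref{sec: trop-moduli}, the image under $f_{\trop}$ of the infinite point of the marked edge $e_{p_i}$ of $G_{\mathscr C}$. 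But in the proof of Theorem~\ref{thm: pointwise-trop} it is shown precisely that the edge $e_{p_i}$ is parametrized by the monomial valuations $\val_r$ in the local parameter $x_{p_i}$ cutting out the section, and that $f_{\trop}$ sends its infinite point (the point $r=\infty$, i.e. the reduction $\overline{p}_i$) to the element $c_{p_i}=w_{p_i}u_{p_i}$ of the cone $\sigma$ determined by the contact order homomorphism $c_{p_i}:\overline{\mathpzc M}_{X,f(\overline p_i)}\to\NN$. Since this homomorphism is, by the very definition of the logarithmic structure at the section and Kato's description of log smooth curves, exactly the pullback along $p_i$ of the characteristic of $X$ at $f(\overline p_i)$, restricting the $R$-point valuation of $ev_i^{\an}(x)$ to that characteristic gives back the same element $c_{p_i}$. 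This identifies the two composites on the nose, including on the strata at infinity, because the tropicalization map $X^{\an}\to\overline\Delta$ is computed stratum-by-stratum by restricting to the dual monoid $S_{\sigma}$, matching the target of $c_{p_i}$.

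The main obstacle I anticipate is bookkeeping at the boundary: one must check that when $p_i$ has \emph{trivial} contact order the diagram restricts correctly to $T^{\an}\to N_\RR$ (here $e_{p_i}$ is contracted, $w_{p_i}=0$, and both composites land at the position of the vertex carrying $p_i$, matching the torus-valued evaluation $ev_i:\Lsm^\circ_\Gamma(X)\to T$), while for a marked point landing in a boundary stratum $V(\sigma)$ one must correctly match the extended-cone coordinate $\overline\sigma=\Hom_{\bf Mon}(S_\sigma,\RR_{\ge 0}\sqcup\{\infty\})$ against the contact homomorphism, which involves tracking which coordinates go to $\infty$. This is exactly the content of the analysis of ``the edges of $G_{\mathscr C}$'' in the proof of Theorem~\ref{thm: pointwise-trop}, so I would simply cite that computation. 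Finally, since all four spaces are (generalized extended) cone complexes or their analytifications and the three maps $\trop$, $ev_i^{\an}$, $ev_i^{\trop}$ are continuous, commutativity on the dense locus $\Lsm^{\circ,\an}_\Gamma(X)$ where the log structure is trivial propagates to the whole space by continuity and density, completing the argument.
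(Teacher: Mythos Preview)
Your overall strategy---chase a single family over $\spec(R)$ and compute both composites explicitly---is sound, and it genuinely differs from the paper's proof, which is a one-line appeal to Ulirsch's functoriality of skeletons for logarithmic morphisms \cite[Theorem~1.1]{U13} together with Theorem~\ref{thm: skeleton}. Your route is more elementary in that it avoids citing external machinery, at the cost of some bookkeeping; the paper's route is slicker but leaves implicit the identification of the induced map on skeletons with the combinatorially defined $ev_i^{\trop}$.

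There is, however, a misstatement in your main paragraph that you should correct. You write that $f_{\trop}$ sends the infinite point of $e_{p_i}$ to the element $c_{p_i}=w_{p_i}u_{p_i}$ of $\sigma$. This is not what the proof of Theorem~\ref{thm: pointwise-trop} establishes: $c_{p_i}$ is the \emph{slope} (direction and expansion factor) of the edge $f_{\trop}(e_{p_i})$, not the image of any particular point on it. For nontrivial contact order the infinite point lands at infinity in $\overline\sigma$; for trivial contact order---which is the only case relevant to this proposition, since $ev_i$ is defined at markings of trivial contact---one has $c_{p_i}=0$, yet the image of the infinite point is the position $f_{\trop}(v)$ of the adjacent vertex, which is typically not the origin. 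You do say the right thing in your ``obstacle'' paragraph, but the main argument as written does not match it.

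The cleanest fix, still in the spirit of your approach, is to bypass the contact-order discussion entirely. By the very definition in Section~\ref{sec: pointwise-trop}, $f_{\trop}=\trop\circ f^{\an}\circ s$, and the section $s:G_{\mathscr C}\hookrightarrow C^{\an}$ sends the infinite point of $e_{p_i}$ to the type~1 point $p_i\in C^{\an}$. Hence
\[
ev_i^{\trop}(\trop(x)) \;=\; f_{\trop}(\text{infinite point of }e_{p_i}) \;=\; \trop\bigl(f^{\an}(p_i)\bigr) \;=\; \trop\bigl(ev_i^{\an}(x)\bigr),
\]
which is exactly the desired commutativity, and this holds pointwise without any case analysis. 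Your continuity-and-density argument then extends it to all of $\Lsm^{\an}_\Gamma(X)$ as you say.
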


\begin{proof}
This follows from functoriality for formation of skeletons for logarithmic morphisms~\cite[Theorem 1.1]{U13} and Theorem~\ref{thm: skeleton}.
\end{proof}

\subsection{Proof of Theorem~\ref{thm: enumerative}} 

We will need the following fact about tropical multiplicities in order to deduce the correspondence theorem. The degree of the evaluation morphism above can be computed at the level of Berkovich analytic spaces, and hence at the level their skeletons. The lemma below ensures that there are no corrections in passing from the skeleton to the tropicalization.

\begin{lemma}\label{lem: multiplicity-1}
All tropical multiplicities of $\Tsm_\Gamma(\Delta)$ are equal to $1$.
\end{lemma}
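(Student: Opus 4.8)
\emph{Proof plan.} The strategy is to transport the statement to $\overline M_{0,n+m}$, where the analogous fact is classical. Recall from Proposition~\ref{prop: lsm-m0n} that $\Lsm^\circ_\Gamma(X)\cong M_{0,n+m}\times T$, and that under the embedding $\iota\times\bm{I\!d}$ considered in Proposition~\ref{prop: a-gross-proposition} this realizes $\Lsm^\circ_\Gamma(X)$ as a locally closed subvariety of the torus $T''\times T$. Consequently, as weighted balanced polyhedral complexes in the sense of~\cite[Theorem 3.3.5]{MS14},
\[
\trop\big(\Lsm^\circ_\Gamma(X)\big)\;=\;\trop(M_{0,n+m})\times \trop(T)\;=\;M^{\trop}_{0,n+m}\times N_\RR ,
\]
and, since initial degenerations and hence tropical multiplicities are multiplicative for products while $\trop(T)=N_\RR$ carries weight $1$, the tropical multiplicity of a maximal cell $\sigma\times N_\RR$ equals that of $\sigma$ in $M^{\trop}_{0,n+m}$. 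So it suffices to show that every maximal cell of $M^{\trop}_{0,n+m}$, as a weighted balanced complex, carries weight $1$; this is independent of the choice of fan structure.

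This last assertion is where the one genuine input enters. By~\cite[Section 5]{Tev07}, Kapranov's embedding~\cite{Kap93} realizes $M^{\trop}_{0,n+m}$ as the quotient of the tropical Grassmannian $\trop\, G(2,n+m)$ by its lineality space; and by Speyer and Sturmfels~\cite{SS04a}, $\trop\, G(2,n+m)$ is the space of phylogenetic trees, every maximal cone of which has weight $1$. Passing to the quotient by the lineality space leaves these weights unchanged, so all tropical multiplicities of $M^{\trop}_{0,n+m}$ equal $1$, and therefore the same holds for $\trop(\Lsm^\circ_\Gamma(X))$.

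It remains to transfer this to $\Tsm_\Gamma(\Delta)$. By Proposition~\ref{prop: trop-embedding}, the fan $\cT$ on $\Tsm^\circ_\Gamma(\Delta)$ is a refinement of $\cF_{n+m}\times\Delta$, i.e.\ a refinement of the weighted complex $\trop(\Lsm^\circ_\Gamma(X))$ identified above (using Proposition~\ref{prop: a-gross-proposition} to match supports). Refining a tropical cycle leaves the weight of each maximal cell unchanged, so every maximal cone of $\cT$ has weight $1$. Finally, the canonical compactification $\Tsm_\Gamma(\Delta)$ of $\Tsm^\circ_\Gamma(\Delta)$ adds only cells at infinity of strictly smaller dimension, so its maximal cells coincide with those of $\Tsm^\circ_\Gamma(\Delta)$; hence all tropical multiplicities of $\Tsm_\Gamma(\Delta)$ equal $1$.

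I expect no serious obstacle: the only non-formal ingredient is the weight-$1$ property of the space of trees~\cite{SS04a}, and everything else is multiplicativity of tropical multiplicities under products and their invariance under refinement, both standard~\cite{MS14}. The one point requiring a little care is checking that the embedding of $\Tsm^\circ_\Gamma(\Delta)$ used in Proposition~\ref{prop: trop-embedding} agrees, up to refinement of fans, with the product embedding $\iota\times\bm{I\!d}$ of $\Lsm^\circ_\Gamma(X)$ — but this compatibility is precisely what Propositions~\ref{prop: trop-embedding} and~\ref{prop: a-gross-proposition} supply.
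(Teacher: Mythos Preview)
Your argument is correct and follows the same overall architecture as the paper: reduce via Proposition~\ref{prop: a-gross-proposition} to the tropicalization of $M_{0,n+m}\times T$, and then invoke a known multiplicity-$1$ result for $M^{\trop}_{0,n+m}$. The only substantive difference is in how that last input is justified. The paper appeals to the fact that $\overline M_{0,n+m}$ is a wonderful compactification of a hyperplane arrangement complement, so all initial degenerations are smooth and irreducible and hence all multiplicities are $1$ (citing~\cite[Section 6.7]{MS14}); you instead invoke the Speyer--Sturmfels description of $\trop\,G(2,n+m)$ as the space of phylogenetic trees with unit weights~\cite{SS04a}. Both are valid routes to the same fact. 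The paper's version is slightly more conceptual and avoids the passage through the lineality-space quotient, while yours has the advantage of being explicit about why refinement to the fan $\cT$ and passage to the compactification do not affect the conclusion---points the paper leaves implicit.
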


\begin{proof}
By Proposition~\ref{prop: a-gross-proposition}, the tropicalization $\Lsm_\Gamma^\circ(X)$ coincides with the tropicalization of the space $M_{0,n+m}\times T$. Since $\overline M_{0,n+m}$ is a wonderful compactification of a hyperplane complement. It follows that all initial degenerations are smooth and irreducible, so all multiplicities are equal to $1$, see~\cite[Section 6.7]{MS14}.
\end{proof}

We now prove Theorem~\ref{thm: enumerative}. To reduce the burden of notation, let $\cE$ denote the fan of the toric variety $\prod_{i=1}^m X_{L_i}$. It follows from~\cite[Proposition 3.6]{Gro14} that $Ev_{\mathscr L}: \Lsm_\Gamma(X)\to  X(\cE)$ is the restriction of an morphism of toric varieties $X(\cT)\to X(\cE)$. As discussed in Section~\ref{sec: background-1}, since $X(\cE)$ is a toric variety, there is a canonical projection to the skeleton
\[
\bm p': X(\cE)^{\an}\to \overline \cE.
\] 
Let $\sigma$ be a top dimensional cell in $\cE$, and $\mathscr U$ be its preimage under $\bm p'$. The set $\mathscr U$ is a polyhedral affinoid domain in $T^{\an}\subset X^{\an}$. By~\cite[Lemma 8.3]{BPR} the degree of $Ev_{\mathscr L}$ is  the degree of the map
\[
(Ev_{\mathscr L}^{\an})^{-1}(\mathscr U)\to \mathscr U.
\]
By~\cite[Proposition 4.8]{GKM07} the map
\[
\Tsm^\circ_\Gamma(\Delta)\to \cE
\] 
is a morphism of fans. Since all tropical multiplicities are equal to $1$ by Lemma~\ref{lem: multiplicity-1} above. By the Sturmfels--Tevelev multiplicity formula~\cite[Theorem 1.1]{ST}, the absolute value of the determinant of the map $\trop(Ev_{\mathscr L}): \Tsm^\circ_\Gamma(\Delta) \to \cE$ calculates the degree of $Ev_{\mathscr L}$ and the result follows. \qed

\begin{remark}
The multiplicity obtained in the above result coincides with the one obtained by Mikhalkin~\cite{Mi03} for rational curves in $\PP^2$, where the approaches overlap, see~\cite[Remark 5.2]{GKM07}.
\end{remark}

\bibliographystyle{siam}
\bibliography{ToricLogMaps}

\end{document}